\definecolor{darkblue}{rgb}{0,0,.7}
\newlist{alphenum}{enumerate}{1}
\setlist[alphenum]{fullwidth,label={(\alph*)}}
\theoremstyle{definition}
\newtheorem{theorem}{Theorem}[section]
\newtheorem{remark}[theorem]{Remark}
\newtheorem{lemma}[theorem]{Lemma}
\numberwithin{figure}{section}
\numberwithin{table}{section}
\numberwithin{equation}{section}
\newcommand{\dx}{\, d\boldsymbol x}
\newcommand{\ds}{\, ds}
\newcommand{\vecb}[1]{\boldsymbol{#1}}
\newcommand{\Pd}{P^{\mathrm{disc}}}
\newcommand{\Hdiv}{\vecb{H}(\mathrm{div})}
\begin{document}
\date{\today}

\title[Inf-sup stabilized Scott--Vogelius pairs by Raviart--Thomas enrichment]{Inf-sup stabilized Scott--Vogelius pairs on general simplicial grids by Raviart--Thomas enrichment}

\author{Volker~John}
\address{Weierstrass Institute for Applied Analysis and Stochastics,
Leibniz Institute in Forschungsverbund Berlin e.V. (WIAS), Mohrenstr.~39, 10117 Berlin, Germany and
Freie Universit\"at of Berlin,
Department of Mathematics and Computer Science,
Arnimallee 6, 14195 Berlin, Germany}
\email{john@wias-berlin.de}
\author{Xu~Li}
\address{School of Mathematics, Shandong University, Jinan 250100, China}
\email{xulisdu@126.com}
\author{Christian~Merdon}
\address{Weierstrass Institute for Applied Analysis and Stochastics,
Leibniz Institute in Forschungsverbund Berlin e.V. (WIAS), Mohrenstr.~39, 10117 Berlin, Germany}
\email{merdon@wias-berlin.de}
\author{Hongxing~Rui}
\address{School of Mathematics, Shandong University, Jinan 250100, China}
\email{hxrui@sdu.edu.cn}
\thanks{The second author was supported by the China Scholarship Council (No.~202106220106) and the National Natural Science Foundation of China (No.~12131014). The third author gratefully acknowledges the funding by the German Science Foundation (DFG) within the project ``ME 4819/2-1". The fourth author was supported by the National Natural Science Foundation of China (No.~12131014).}

\begin{abstract}
  This paper considers the discretization of the Stokes equations with Scott--Vogelius pairs of finite element
  spaces on arbitrary shape-regular simplicial grids. A novel way of stabilizing these pairs with respect to the discrete inf-sup condition is proposed
  and analyzed. The key idea consists in enriching the continuous polynomials of order $k$ of the Scott--Vogelius
  velocity space with appropriately chosen and explicitly given Raviart--Thomas bubbles. This approach is
  inspired by [Li/Rui, IMA J. Numer. Anal, 2021], where the case $k=1$ was studied. The proposed method is
  pressure-robust, with optimally converging $\vecb{H}^1$-conforming velocity and a small
  $\vecb{H}(\mathrm{div})$-conforming correction rendering the full velocity divergence-free.
  For $k\ge d$, with $d$ being the dimension, the method is parameter-free.
  Furthermore, it is shown that  the additional degrees of freedom
  for the Raviart--Thomas enrichment and also all non-constant pressure degrees of freedom
  can be condensated, effectively leading to a pressure-robust, inf-sup stable, optimally convergent
  $\vecb{P}_k \times P_0$ scheme. Aspects of the implementation are discussed and numerical studies
  confirm the analytic results.
\end{abstract}

\subjclass[2020]{65N12, 65N30, 76D07}
\keywords{Finite element methods, Stokes equations, divergence-free, pressure-robustness}

\maketitle

\section{Introduction}

The research on divergence-free schemes for incompressible flow equations is a very active
field of research as these schemes have desirable properties with respect to mass conservation
and other structure preservation features. A very important feature is
pressure-robustness \cite{LM:2016, JLMNR:2017}, which guarantees that
the balancing of gradient forces
by the pressure gradient is correctly transferred from the continuous problem to
the discrete problem,
such that the discrete velocity is zero whenever the right-hand
side force is a gradient. The same holds for the balancing of the irrotational part of
the material derivative in time-dependent Navier--Stokes flows \cite{SMAI-JCM_2019__5__89_0}.
It was shown that non-pressure-robust schemes can lead to discrete velocity solutions
that have errors which scale with the inverse viscosity \cite{LM2016,JLMNR:2017}, or even to suboptimal convergence rates in
time-dependent Stokes problems \cite{Linke_2019}. Also optimal estimates for more complicated flow problems
seem to benefit from pressure-robustness \cite{Schroeder_2017,Schroeder_2018, ABBGLM2021,BDV2021,HAN2021113365,GJN21}.

It should be also noted that the set of divergence-free schemes and
the set of pressure-robust schemes are not subsets of each other, as there are non-divergence-free
schemes that can be made pressure-robust by a reconstruction operator technique \cite{LM:2016,LLMS2017}.
On the other hand, there
are divergence-free methods that are not necessarily pressure-robust, like virtual element methods
without a proper right-hand side discretization, see \cite{FM2020,VEMprobust2021}.
In practice, many non-divergence-free schemes are used together with grad-div stabilization.
This technique reduces or even removes the explicit dependence of the constants in error bounds on
inverse powers of the viscosity, e.g., see \cite{DGJN18} for the evolutionary Navier--Stokes equations.
However, it possesses also certain drawbacks, as it introduces a user-chosen parameter and
it is not mass-conservative.

This paper focuses on divergence-free inf-sup stable schemes, where the notion `divergence-free' will be always used
in the sense of `weakly divergence-free', i.e., the divergence is zero in the sense of $L^2(\Omega)$. To this end, the stationary Stokes equations in a bounded domain
$\Omega \subset \mathbb R^d$, $d\in\{2,3\}$, are considered
\begin{equation}\label{eq:stokes}
\begin{array}{rcll}
-\nu \Delta \vecb{u} +\nabla p & = & \vecb{f} & \mbox{in } \Omega,\\
\mathrm{div} (\vecb{u}) & = & 0 &  \mbox{in } \Omega,\\
\vecb{u} & = & \vecb{0} & \mbox{on }\partial \Omega.
\end{array}
\end{equation}
The boundary $\partial\Omega$ of $\Omega$ is assumed to be polyhedral and Lipschitz continuous. Problem \eqref{eq:stokes}
is already written in a dimensionless form, where the kinematic viscosity $\nu\in\mathbb R$ with $\nu>0$ and the
forces $\vecb{f}$ are the given data. The unknown functions are the velocity field $\vecb{u}$ and the pressure $p$.
System \eqref{eq:stokes} is transferred in the usual way to a weak formulation, which reads:
Find $(\vecb{u},p) \in \vecb{V}\times Q := \vecb{H}_0^1(\Omega) \times L_0^2(\Omega)$ such that
\begin{equation}\label{eq:stokes_weak}
\begin{array}{rcll}
(\nu \nabla \vecb{u},\nabla \vecb{v}) - (\mathrm{div} (\vecb{v}),p) & = & (\vecb{f},\vecb{v}) & \forall\ \vecb{v}\in \vecb{V},\\
(\mathrm{div} (\vecb{u}),q) & = & 0 &  \forall\ q\in Q.
\end{array}
\end{equation}
Here, $\vecb{H}_0^1(\Omega)$ is the vector-valued Sobolev space of functions where each component is in $H^1(\Omega)$ and
has a vanishing trace on $\partial \Omega$, $ L_0^2(\Omega)$ is the space of functions from $L^2(\Omega)$ with
vanishing integral mean value, $(\cdot,\cdot)$ denotes the inner product in $L^2(\Omega)$ and $\vecb{L}^2(\Omega)$,
and it is assumed that $\vecb{f} \in \vecb{L}^2(\Omega)$. By the theory of linear saddle point problems, using the
so-called inf-sup condition, it is known that \eqref{eq:stokes_weak} possesses a unique solution, e.g.,
see \cite{GiraultRaviart:1986,Joh16}.

Since the inf-sup stability is a necessary condition for the well-posedness of a linear saddle point
problem, the construction of classical Galerkin finite element schemes traditionally focused on the satisfaction of
a discrete counterpart of this condition. Let $\vecb{V}_h$ and $Q_h$ denote finite element velocity
and pressure spaces, respectively, then the Galerkin method seeks $(\vecb{u}_h,p_h)
\in \vecb{V}_h\times Q_h$ such that
\begin{equation}\label{eq:stokes_fe}
\begin{array}{rcll}
(\nu \nabla \vecb{u}_h,\nabla \vecb{v}_h) - (\mathrm{div} (\vecb{v}_h),p_h) & = & (\vecb{f},\vecb{v}_h) & \forall\ \vecb{v}_h\in \vecb{V}_h,\\
(\mathrm{div} (\vecb{u}_h),q_h) & = & 0 &  \forall\ q_h\in Q_h.
\end{array}
\end{equation}
It turned out that the goal of satisfying a discrete inf-sup condition was often achieved only by relaxing
the divergence constraint, i.e., the mass conservation is, often by far, not satisfied exactly, e.g., see \cite{JLMNR:2017}.
Given a finite element velocity space $\vecb{V}_h$, then the properties of inf-sup stability and mass
conservation lead in fact to different requirements: for the satisfaction of the discrete inf-sup condition,
$Q_h$ should be sufficiently small and for the exact satisfaction of the divergence constraint, $Q_h$ should
be sufficiently large, compare \cite[Rem.~3.56]{Joh16}.

The starting point of the method proposed and analyzed in the current paper is the pair of spaces
$\vecb{V}_h \times Q_h = \vecb{P}_k \times \Pd_{k-1}$, $k\ge 1$, on simplicial triangulations,
where $\vecb{P}_k$ is the space of
continuous and piecewise polynomial vector-valued functions with polynomial degree $k$ and $\Pd_{k-1}$
is the space of piecewise polynomial functions with polynomial degree $k-1$. As usual, the notation
does not contain the facts that $\vecb{P}_k$ is intersected with $\vecb{V}$ and
$ \Pd_{k-1}$ with $Q$. For $k\geq d$, this pair of spaces is also known as Scott--Vogelius pair \cite{SV:1983,SV1985,Arnold1992,Zhang:2005,Neilan2015}. It is
$\mathrm{div} (\vecb{P}_k) \subseteq \Pd_{k-1}$, so that if $\vecb{u}_h$ is a velocity solution of \eqref{eq:stokes_fe},
one can choose $q_h = \mathrm{div}(\vecb{u}_h)$ in the discrete divergence constraint, leading to
\begin{equation}\label{eq:u_h_div_free}
(\mathrm{div} (\vecb{u}_h), \mathrm{div} (\vecb{u}_h)) = \|\mathrm{div} (\vecb{u}_h)\|^2 = 0,
\end{equation}
which means that $\vecb{u}_h$ is divergence-free. This property is our major motivation for studying
Scott--Vogelius pairs. However, it is well known that the discrete inf-sup condition for these pairs
is valid only for certain classes of meshes and if $k$ is sufficiently large, e.g., see \cite{Zhang:2011b,GS2018,GS2019}. In particular, for $d=3$, which is the interesting case in applications, the smallest
value is $k=3$ for so-called barycentric-refined meshes \cite{Zhang:2005}.
For $k=2$ there is a related method on Powell-Sabin tetrahedral grids with an implicitly defined pressure subspace \cite{Zhang:2011}.

This paper addresses the cases where the inf-sup stability is not given. For such cases, an enrichment
of the discrete velocity space is proposed that consists of local functions, locally very few, which leads to discrete velocity fields that satisfy
\eqref{eq:u_h_div_free} and to pressure-robust velocity error estimates. The last two properties are in
contrast to previous proposals, like the family of Bernardi--Raugel elements that add higher-order polynomial bubbles \cite{BR1985}.
Moreover, the proposed method is easy to implement, in particular it does not involve any face integrals.

In the literature there are meanwhile several approaches for constructing divergence-free pairs of finite
element spaces. In  \cite{10.2307/24488200, GN2018, SH2019}, $\vecb{H}^1$-conforming
functions are utilized for enriching the velocity space, which are however non-standard and not available
in many finite element codes.
Another strategy employs the Stokes complex of lowest regularity where the velocity is searched only in
$\Hdiv$, e.g., see \cite{CKS2007, Wang2007,Anaya_2019}.
This approach  leads to the nowadays quite popular hybrid discontinuous Galerkin methods (HDG)
\cite{CCS2006,Lehrenfeld_2016}.

In this paper we propose and analyze an approach for enriching Scott--Vogelius pairs by $\vecb{H}(\mathrm{div})$-conforming functions.
The main idea of the method is inspired by \cite{li2021low}, where the lowest-order case $k=1$ is studied. Therein a novel scheme was suggested which
preserves the features of the $\vecb{H}^1$-conforming formulation while using a simple lowest-order Raviart--Thomas enrichment.
It turns out that there are for $k\ge2$ new aspects in the construction of the method and
additional difficulties in their numerical analysis.
The standard polynomial space $\vecb{P}_k$ is enriched with an $\vecb{H}(\mathrm{div})$-conforming subspace
of specially chosen but standard Raviart--Thomas functions, which ensures that the discrete inf-sup condition holds and that the
divergence constraint is satisfied exactly on general unstructured shape-regular simplicial meshes.
In particular no assumption on non-singular vertices is needed.
The added $\vecb{H}(\mathrm{div})$-conforming part vanishes in the limit $h \rightarrow 0$, while the $\vecb{H}^1$-conforming part
is an approximation of the velocity of optimal order.  If $k < d$, the lowest-order
Raviart--Thomas space is involved, while for the higher order cases $k \geq d$, only interior non-divergence-free Raviart--Thomas bubbles of order $k-1$ are used for the enrichment.
These additional degrees of freedom ensure the inf-sup stability of the scheme and lead to a pressure-robust
$\vecb{H}^1$-conforming solution, which
can be turned into a divergence-free solution by adding the $\Hdiv$-conforming Raviart--Thomas correction.

A subtle point of the proposed method is that the space $\vecb{P}_k$, $k\ge 2$, and the enrichment space might
have a non-zero intersection on certain triangulations, for which we
also provide an example. Nevertheless, it is ensured that the method selects a unique discrete solution.
For $k<d$ there is a stabilization on the lowest-order Raviart--Thomas part, which involves a parameter and essentially penalizes this part of the enrichment component,
as in the original scheme for $k=1$ from \cite{li2021low}.
For any higher-order Raviart--Thomas part, no stabilization is needed, hence the scheme is parameter-free for $k\geq d$,
and the uniqueness follows instead from the bijectivity of the divergence operator
with respect to the enrichment space and the divergence constraint.

The final result of the paper concerns a condensed method in the spirit of other divergence-free schemes \cite{Lehrenfeld2010,Veiga17,Wei_VEM2021}.
Indeed, it is possible to statically condensate the additional Raviart--Thomas velocity degrees of freedom and
the higher order pressure degrees of freedom, effectively resulting in  a $\vecb{P}_k \times P_0$ scheme.
The condensed scheme preserves all the properties of the full
scheme and the missing degrees of freedom can be restored by a cheap post-processing. Behind this transformation of the problem is a
correction operator that maps each conforming polynomial test function to a Raviart--Thomas correction such that the divergence of their sum
is a piecewise constant.

The paper is structured as follows. Section~\ref{sec:main_concept} explains the main
concept of the scheme and explicitly constructs the needed Raviart--Thomas subspaces for enrichment up to a certain order.
The full discrete problem is presented in Section~\ref{sec:full_scheme} and the mechanisms behind
the uniqueness of the solution are discussed. In Section~\ref{sec:infsup} it is  shown that the enrichment spaces ensure
inf-sup stability, i.e., the missing ingredient for unique solvability.
Section~\ref{sec:apriori_estimates} derives optimal a priori error estimates
for the velocity and the pressure via standard arguments. The derivation of the condensed scheme and the post-processing
technique for recovering the full solution are
presented in Section~\ref{sec:reduced_scheme}. Some numerical studies in Section~\ref{sec:numerics}
confirm the results in two and three dimensions and for different polynomial orders $k$.
The paper closes with a summary and outlook.

\section{Main concept}
\label{sec:main_concept}
This section explains the main concept and the design of the Raviart--Thomas enrichment spaces used in the suggested method.

\subsection{Notation and preliminaries}
\label{sec:notation}
Consider a regular triangulation \(\mathcal{T}\) of the domain \(\Omega\) with nodes \(\mathcal{N}\) and facets \(\mathcal{F}\).
The set of all interior faces is denoted by $\mathcal{F}^0$.
Denote by $h_T$ the diameter of elements $T\in \mathcal{T}$ and define $h:=\max_{T\in\mathcal{T}}h_T$
as well as the piecewise constant function $h_\mathcal{T}$ via $h_\mathcal{T}|_T := h_T$ for all $T \in \mathcal{T}$.

The space of scalar-valued polynomials of order $k$ on a subdomain $\omega$ is denoted by \(P_k(\omega)\) and is written in bold for
vector-valued polynomial spaces.
The subspace of divergence-free functions is denoted by
\begin{align*}
  \vecb{V}_0 := \left\lbrace \vecb{v} \in \vecb{V} : \mathrm{div} (\vecb{v}) = 0 \right\rbrace.
\end{align*}

Also define
\begin{align*}
{P}_k(\mathcal{T}):=\left\{q_h\in H^1(\Omega): q_h|_T\in P_k(T) \text{ for all } T\in \mathcal{T}\right\},
\end{align*}
and
\begin{align*}
{{P}}_k^{\mathrm{disc}}(\mathcal{T}):=\left\{q_h\in L^2(\Omega): q_h|_T\in P_k(T) \text{ for all } T\in \mathcal{T}\right\}.
\end{align*}

Furthermore, the space of Raviart--Thomas functions on a cell $T \in \mathcal{T}$ is given by
\begin{align*}
  \vecb{RT}_{k}(T) := \left\lbrace \vecb{v} \in \vecb{L}^2 (T) :\, \exists \vecb{p} \in \vecb{P}_k(T), q \in {P}_k(T), \ \vecb{v}|_T(\vecb{x}) = \vecb{p}(\vecb{x}) + q(\vecb{x}) \vecb{x} \right\rbrace
\end{align*}
and it is the building block of the global space
\begin{align*}
  \vecb{RT}_{k}(\mathcal{T}) := \left\lbrace \vecb{v} \in \vecb{H}(\mathrm{div},\Omega) : \forall T \in \mathcal{T} \, \vecb{v}|_T \in \vecb{RT}_{k}(T) \right\rbrace.
\end{align*}
The Raviart--Thomas subspace of interior bubble functions reads
\begin{align*}
\vecb{RT}_k^{\mathrm{int}}(\mathcal{T}):=\left\lbrace\vecb{v}\in\vecb{RT}_k(\mathcal{T}): \vecb{v}\cdot\vecb{n}_T|_{\partial T}=0 \text{ for all } T\in\mathcal{T}\right\rbrace,
\end{align*}
where $\vecb{n}_T$ is the outer unit normal vector along $\partial T$.
The subspace of $\vecb{RT}_k^{\mathrm{int}}(\mathcal{T})$ consisting of divergence-free functions and its arbitrary but fixed complement space are denoted by $\vecb{RT}_{k,0}^{\mathrm{int}}(\mathcal{T})$ and $\widetilde{\vecb{RT}}_k^{\mathrm{int}}(\mathcal{T})$, respectively, i.e.,
\begin{align*}
\vecb{RT}_k^{\mathrm{int}}(\mathcal{T})=\vecb{RT}_{k,0}^{\mathrm{int}}(\mathcal{T})\oplus \widetilde{\vecb{RT}}_k^{\mathrm{int}}(\mathcal{T}).
\end{align*}
The only divergence-free function in $\widetilde{\vecb{RT}}_k^{\mathrm{int}}(\mathcal{T})$ is the zero function, which implies that the divergence operator on this space is injective. Note that $\widetilde{\vecb{RT}}_k^{\mathrm{int}}(\mathcal{T})$ is not unique in general cases. Here we require that its local spaces $\widetilde{\vecb{RT}}_k^{\mathrm{int}}(T), T\in \mathcal{T}$, have the same structure in the sense that all of them are connected to a same reference space via Piola's transformation (see e.g.\ \cite[Eq.~2.1.69]{BBF:book:2013}). This is natural for $\widetilde{\vecb{RT}}_k^{\mathrm{int}}(\mathcal{T})$ since it is characterized by the normal trace and the divergence, which are preserved (in a scaled meaning) by Piola's transformation.

The subspace of $P_{k}^{\mathrm{disc}}(\mathcal{T})$ consisting of elementwise zero-mean functions is defined as
\begin{equation}\label{eq:def_tildepk_disc}
\widetilde{P}_{k}^{\mathrm{disc}}(\mathcal{T}):=\left\{q_h\in {P}_{k}^{\mathrm{disc}}(\mathcal{T}): (q_h, 1)_T=0 \text{ for all } T\in\mathcal{T}\right\}.
\end{equation}
Note that $\vecb{RT}_0^{\mathrm{int}}(\mathcal{T})=\widetilde{\vecb{RT}}_0^{\mathrm{int}}(\mathcal{T})=\left\{\vecb{0}\right\}$ and $\widetilde{P}_{0}^{\mathrm{disc}}(\mathcal{T})=\lbrace 0 \rbrace$.

Throughout this paper, for any (scalar or vector-valued) finite element space $\mathcal{S}$
(with or without an argument like $\mathcal{T}$), its local version on each element $T$
is denoted by $\mathcal{S}(T)$ if not specially indicated. The symbol $\pi_\mathcal{S}$ ($\pi_{\mathcal{S}(T)}$) is used to denote the $L^2$ projection operator onto $\mathcal{S}$ ($\mathcal{S}(T)$, respectively).

\subsection{Enrichment approach}
\label{sec:enrichment_principle}
For a given $k \geq 1$, we define
the $\vecb{H}^1$-conforming velocity ansatz
space of piecewise vector-valued polynomials
\begin{align*}
	\vecb{V}_h^{\mathrm{ct}} := \vecb{P}_k(\mathcal{T}) \cap \vecb{V}
\end{align*}
and the desired pressure space
\begin{align*}
  Q_h := {P}_{k-1}^{\mathrm{disc}}(\mathcal{T}) \cap Q.
\end{align*}

In general, without further assumptions on the mesh or $k$,
one has to expect a violation of the inf-sup stability.
However, we can always assume the existence of a sufficiently small auxiliary piecewise
pressure subspace $\widehat{Q}_h = (\bigcup_{T \in \mathcal{T}} \widehat{Q}_h(T)) \cap Q$
with $\widehat{Q}_h(T) \subseteq P_{k-1}(T)$ (it can be sometimes only the zero space)
such that \((\vecb{V}_h^{\mathrm{ct}}, \widehat{Q}_h)\) is inf-sup stable.
Consider now the $L^2$ orthogonal split of \(Q_h\) into \(Q_h=\widehat{Q}_h\oplus_{L^2}\widehat{Q}_h^{\perp}\).
The main motivation for finding the enrichment space $\vecb{V}_h^\mathrm{R}$ is twofold:
on the one hand, $\vecb{V}_h^\mathrm{R}$ should fix the potential spurious pressure modes in $\widehat{Q}_h^\perp$;
on the other hand, $\mathrm{div}(\vecb{V}_h^\mathrm{R}) \subseteq Q_h$ guarantees that the
discrete velocity is still divergence-free.

In fact, we suggest to select a subspace of $\vecb{RT}_{k-1}$
such that either
\begin{align}\label{eqn:bijection_property}
    \mathrm{div} : \vecb{V}_h^\mathrm{R} \rightarrow \widehat{Q}_h^{\perp}
   \quad \text{is bijective}\text{ for } k\geq d,
\end{align}
or
\begin{align}\label{eqn:bijection_property2}
   \mathrm{div} : \vecb{V}_h^\mathrm{R} \rightarrow \widehat{Q}_h^{\perp}
   \quad \text{is surjective} \text{ for } k<d.
\end{align}
Eventually, it is shown that an inf-sup stable scheme for
the velocity space \(\vecb{V}_h := \vecb{V}_h^{\mathrm{ct}} \times \vecb{V}_h^\mathrm{R}\) and the full pressure space \(Q_h\) can be established,
such that its solution \(\vecb{u}_h = (\vecb{u}_h^{\mathrm{ct}}, \vecb{u}_h^{\mathrm{R}}) \in \vecb{V}_h^{\mathrm{ct}} \times \vecb{V}_h^\mathrm{R}\) is divergence-free in the sense that
\(\mathrm{div}(\vecb{u}_h^{\mathrm{ct}} + \vecb{u}_h^{\mathrm{R}}) = 0\).

Of course, the choice of $\widehat{Q}_h$ is not unique in general. Properties \eqref{eqn:bijection_property} and \eqref{eqn:bijection_property2} are the fundamental principles for selecting the spaces.
Tables~\ref{tab:spaces2D} and \ref{tab:spaces3D} list some guaranteed
inf-sup stable pairs \((\vecb{V}_h^{\mathrm{ct}}, \widehat{Q}_h)\), implied, e.g.,\ by \cite[Sec. 8.6-8.7]{BBF:book:2013},
for different $k$ in two and three dimensions, respectively.
The other columns in these tables state the expected dimension of the enrichment spaces
and some hint on how the enrichment functions can be chosen in each case.
Their detailed construction is explained in the next subsection
where it is also
shown that the divergence operator between $\vecb{V}_h^\mathrm{R}$ and $\widehat{Q}_h^\perp$
is bijective (except for $k< d$).
Generalizing the examples from Tables~\ref{tab:spaces2D} and \ref{tab:spaces3D}, the space
$\widehat{Q}_h$ can be chosen as
\begin{equation}\label{eq:def_qh}
\begin{aligned}
\widehat{Q}_h:=\begin{cases}
\left\{\vecb{0}\right\} \quad & k<d,\\
{P}_{k-d}^{\mathrm{disc}}(\mathcal{T}) \cap Q \quad &  k\geq d,\\
\end{cases}
\end{aligned}
\end{equation}
and the corresponding Raviart--Thomas enrichment space $\vecb{V}_h^\mathrm{R}$ is chosen as
\begin{equation}\label{eq:def_vhr}
\begin{aligned}
\vecb{V}_h^\mathrm{R}:=\begin{cases}
\vecb{RT}_{0}(\mathcal{T}) \cap \vecb{H}_0(\mathrm{div},\Omega) \quad & k=1,\\
(\vecb{RT}_{0}(\mathcal{T}) \cap \vecb{H}_0(\mathrm{div},\Omega)) \oplus \vecb{RT}_{1}^{\text{int}}(\mathcal{T})\quad & k=2, d=3,\\
\left\{\vecb{v}_{h}\in\widetilde{\vecb{RT}}_{k-1}^{\text{int}}(\mathcal{T}): \mathrm{div}(\vecb{v}_h)\in \widehat{Q}_h^{\perp}\right\}\quad & k\geq d.
\end{cases}
\end{aligned}
\end{equation}
Here, \(\vecb{H}_0(\mathrm{div},\Omega)\) denotes the space of functions from \(\vecb{H}(\mathrm{div},\Omega)\)
with zero normal trace along \(\partial \Omega\).
The local dimension of $\vecb{V}_h^\mathrm{R}$ is
\begin{align*}
\mathrm{dim}\left(\vecb{V}_{h}^{\mathrm{R}}(T)\right)=\begin{cases}
d+1 \quad & k=1,\\
\mathrm{dim}\left(P_{k-1}(T)\right)-\mathrm{dim}\left(P_{k-2}(T)\right)=k   \quad & k\geq 2,d=2,\\
7   \quad & k=2, d=3,\\
\mathrm{dim}\left(P_{k-1}(T)\right)-\mathrm{dim}\left(P_{k-3}(T)\right)=k^2   \quad & k\geq 3,d=3.\\
\end{cases}
\end{align*}

\begin{table}
  \caption{\label{tab:spaces2D}Enrichment spaces for $d=2$ and $k=1,2,3,4$.}
  \begin{tabular}{ccccl}
  k & $\widehat{Q}_h(T)$ &   $\mathrm{dim}(\widehat{Q}_h^{\perp}(T))$ & $\mathrm{dim}(\vecb{V}_h^\mathrm{R}(T))$ \\
  \hline
  1 & $\lbrace 0 \rbrace$ &  1 & 3 & full $\vecb{RT}_0(T)$\\
  2 & $P_0(T)$            &  2 & 2 & full $\vecb{RT}^\text{int}_1(T)$\\
  3 & $P_1(T)$            &  3 & 3 & from $\widetilde{\vecb{RT}}^\text{int}_2(T)$\\
  4 & $P_2(T)$            &  4 & 4 & from $\widetilde{\vecb{RT}}^\text{int}_3(T)$\\
\end{tabular}
\end{table}
\begin{table}
  \caption{\label{tab:spaces3D}Enrichment spaces for $d=3$ and $k=1,2,3$.}
  \begin{tabular}{ccccl}
  k & $\widehat{Q}_h(T)$ &$\mathrm{dim}(\widehat{Q}_h^{\perp}(T))$ & $\mathrm{dim}(\vecb{V}_h^\mathrm{R}(T))$ \\
  \hline
  1 & $\lbrace 0 \rbrace$ &  1 & 4 & full $\vecb{RT}_0(T)$\\
  2 & $\lbrace 0 \rbrace$ &  4 & 7 & full $\vecb{RT}_0(T)$ \& full $\vecb{RT}^\text{int}_1(T)$\\
  3 & $P_0(T)$            &  9 & 9 & from $\widetilde{\vecb{RT}}^\text{int}_2(T)$\\
\end{tabular}
\end{table}

Throughout this paper, the analysis for $k<d$ cases is based on the spaces by \eqref{eq:def_vhr}, while for $k\geq d$ cases we only require $\widehat{Q}_h(T)\supseteq {P}_0(T)$ and further $\vecb{V}_h^\mathrm{R}\times\widehat{Q}_h^{\perp}\subseteq \widetilde{\vecb{RT}}_{k-1}^{\mathrm{int}}(\mathcal{T})\times\widetilde{P}_{k-1}^{\mathrm{disc}}(\mathcal{T})$ satisfies \eqref{eqn:bijection_property}.
Indeed, from the separation of $\vecb{H}(\mathrm{div})$-conforming finite element spaces, e.g., as in \cite[\S\,2.2.4]{Lehrenfeld2010} and \cite[\S\,5.2-5.3]{Zaglmayr2006}, one can verify that
\(\mathrm{div}: \vecb{RT}_k^{\mathrm{int}}(\mathcal{T})\rightarrow \widetilde{P}_{k}^{\mathrm{disc}}(\mathcal{T})\) is a surjective operator and thus
\(\mathrm{div}: \widetilde{\vecb{RT}}_k^{\mathrm{int}}(\mathcal{T})\rightarrow \widetilde{P}_{k}^{\mathrm{disc}}(\mathcal{T})\) is a bijective operator.
Also note that
$\widetilde{\vecb{RT}}_1^{\mathrm{int}}(\mathcal{T})=\vecb{RT}_1^{\mathrm{int}}(\mathcal{T})$.

\begin{remark}[On over-enrichment]
  There may be cases where a larger space $\widehat{Q}_h$ and hence
  a smaller enrichment space is sufficient.
  This is certainly also connected to the mesh properties.
To give one example:
if the mesh is a barycentric refinement of some given mesh, then inf-sup stability is already ensured
for the full pressure space, i.e.,\ $\widehat{Q}_h = Q_h$ for $k \geq d$ is a valid choice,
which results in the Scott--Vogelius finite element family \cite{SV:1983,Arnold1992,Zhang:2005}.
Also in case of unstructured grids with non-singular vertices, positive results for
$k \geq 3$ in two dimensions are known \cite{GS2018,GS2019}.
From this perspective our schemes can be seen as a stabilization of the
Scott--Vogelius family that ensures inf-sup stability on general shape-regular meshes,
even with singular vertices, that works for any $k \geq 1$.

Moreover, it turns out that a potential
over-enrichment is not really an issue in practice.
In fact, Section~\ref{sec:reduced_scheme} explains how all additional enrichment degrees of freedom
can be effectively condensated. By `over-enrichment', the pressure space can be even decreased to
piecewise constants without compromising any qualitative properties of the scheme.
\end{remark}

\subsection{Explicit constructions of the enrichment space}
\label{sec:enrichment_space_choices}

This subsection discusses explicit element-wise constructions of basis functions of \(\vecb{V}_h^\mathrm{R}\) for
\(k = 2, \ldots, 6-d\) beyond the $k=1$ case in \cite{li2021low}. To fix local enumerations, consider a simplex \(T \in \mathcal{T}\)
with faces \(F_j\) and their respective opposing vertex \(\vecb{P}_j\)
with nodal basis function \(\varphi_j\).
The lowest-order Raviart--Thomas functions read
\begin{align*}
  \vecb{\psi}^{\mathrm{RT}_0}_j := \frac{1}{d \lvert T \rvert} \left(\vecb{x} - \vecb{P}_j\right)
  \quad \text{such that} \quad \int_{F_k} \vecb{\psi}^{\mathrm{RT}_0}_j \cdot \vecb{n}_T \textit{ds} = \delta_{jk},
\end{align*}
for \(j,k = 1,\ldots,d+1\).
By a multiplication of these basis functions with their opposite nodal basis functions, one obtains
the interior Raviart--Thomas functions
\begin{align*}
  \vecb{\psi}^{\mathrm{RT}_1}_j := \varphi_j \vecb{\psi}^{\mathrm{RT}_0}_j \in \widetilde{\vecb{RT}}_{1}^{\text{int}}(T).
\end{align*}
Note that only $d$ of them are linearly
independent. Indeed it holds $\vecb{\psi}^{\mathrm{RT}_1}_{d+1} = - \sum_{j=1}^{d} \vecb{\psi}^{\mathrm{RT}_1}_j$.

\begin{lemma}[Explicit design of $\vecb{V}_h^\mathrm{R}$ for $d=2$]\label{lem:vhr_2d}
  On any $T \in \mathcal{T}$, the following statements hold:
  \begin{itemize}
    \item[(a)] The functions \(\lbrace \vecb{\psi}^{\mathrm{RT}_{1}}_j \rbrace_{j = 1,2} \subset \widetilde{\vecb{RT}}_{1}^{\text{int}}(T)\) are linearly independent
    and it holds
  \begin{align*}
    \int_T \mathrm{div}(\vecb{\psi}^{\mathrm{RT}_1}_j) \dx = 0.
  \end{align*}
For $\vecb{V}_h^\mathrm{R}(T) :=  \mathrm{span} \lbrace \vecb{\psi}^{\mathrm{RT}_1}_1, \vecb{\psi}^{\mathrm{RT}_1}_2 \rbrace=\widetilde{\vecb{RT}}_{1}^{\text{int}}(T)$
the restricted divergence operator
\begin{align*}
   \mathrm{div} : \vecb{V}_h^\mathrm{R}(T) \rightarrow \widehat{Q}_h^\perp(T) \quad \text{for } \widehat{Q}_h(T) = P_0(T)
   \quad \text{is bijective.}
\end{align*}
\item[(b)] The functions
\begin{align*}
  \vecb{\psi}^{\mathrm{RT}_2}_{j} :=
  \left(5 \varphi_j - 2\right) \vecb{\psi}^{\mathrm{RT}_1}_j \in \widetilde{\vecb{RT}}_{2}^{\text{int}}(T)
  \quad \text{for } j =1,2,3,
\end{align*}
are linearly independent
and it holds
\begin{equation*}
  \int_T \mathrm{div}(\vecb{\psi}^{\mathrm{RT}_2}_{j}) \varphi_k \dx = 0 \quad \text{for } j,k = 1,2,3.
\end{equation*}
For $\vecb{V}_h^\mathrm{R}(T) :=  \mathrm{span} \lbrace \vecb{\psi}^{\mathrm{RT}_2}_j : j =1,2,3 \rbrace$
the restricted divergence operator
\begin{align*}
  \mathrm{div} : \vecb{V}_h^\mathrm{R}(T) \rightarrow \widehat{Q}_h^\perp(T) \quad \text{for } \widehat{Q}_h(T) = P_1(T)
  \quad \text{is bijective.}
\end{align*}
\item[(c)] The three auxiliary face-related functions
\begin{align*}
  \vecb{\psi}^{\mathrm{RT}_3}_{j} := \frac{1}{7}\left(7 \varphi_j^2 - 6 \varphi_j + 1\right) \vecb{\psi}^{\mathrm{RT}_1}_j
  \in \widetilde{\vecb{RT}}_{3}^{\text{int}}(T) \quad \text{for } j = 1,2,3,
\end{align*}
and additionally
\begin{multline*}
  \hspace{1cm} \vecb{\psi}^{\mathrm{RT}_3}_{4} := -2\varphi_2\varphi_3\vecb{\psi}^{\mathrm{RT}_1}_2+\frac{2}{45}\left(\vecb{\psi}^{\mathrm{RT}_1}_1+5\vecb{\psi}^{\mathrm{RT}_1}_2\right)\\
      +\frac{1}{70}\left(3\vecb{\psi}^{\mathrm{RT}_2}_{1}+2\vecb{\psi}^{\mathrm{RT}_2}_{2}-3\vecb{\psi}^{\mathrm{RT}_2}_{3}\right)
      \in \widetilde{\vecb{RT}}_{3}^{\text{int}}(T), \hspace{1cm}
\end{multline*}
are linearly independent and it holds
\begin{equation*}
  \int_T \mathrm{div}(\vecb{\psi}^{\mathrm{RT}_3}_{j}) \lambda_h \dx = 0 \quad \text{for } \lambda_h \in P_2(T), j = 1,2,3,4.
\end{equation*}
For $\vecb{V}_h^\mathrm{R}(T) :=  \mathrm{span} \lbrace \vecb{\psi}^{\mathrm{RT}_3}_{j} : j =1,2,3,4 \rbrace$
the restricted divergence operator
\begin{align*}
  \mathrm{div} : \vecb{V}_h^\mathrm{R}(T) \rightarrow \widehat{Q}_h^\perp(T) \quad \text{for } \widehat{Q}_h(T) = P_2(T)
  \quad \text{is bijective.}
\end{align*}
\end{itemize}
\end{lemma}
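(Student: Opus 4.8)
The plan is to establish all three parts by one and the same mechanism, reducing every assertion to an identity between polynomials through the bijection $\mathrm{div}\colon\widetilde{\vecb{RT}}_{k-1}^{\mathrm{int}}(T)\to\widetilde{P}_{k-1}^{\mathrm{disc}}(T)$ recalled in Section~\ref{sec:main_concept}. The decisive structural fact is that the target is small: for $d=2$ one has $\dim\widehat{Q}_h^\perp(T)=\dim P_{k-1}(T)-\dim P_{k-2}(T)=k$, which is exactly the number of functions listed in each of (a)--(c); moreover, since every element of $\widehat{Q}_h^\perp(T)$ is $L^2$-orthogonal to the constants, $\widehat{Q}_h^\perp(T)\subseteq\widetilde{P}_{k-1}^{\mathrm{disc}}(T)$, so its $\mathrm{div}$-preimage inside $\widetilde{\vecb{RT}}_{k-1}^{\mathrm{int}}(T)$ is a space of dimension exactly $k$. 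Consequently it suffices to prove two things: that each listed function lies in this preimage (i.e.\ it is an interior Raviart--Thomas bubble of the indicated order whose divergence satisfies the stated moment conditions), and that the listed functions are linearly independent. The bijectivity of the restricted divergence operator then follows at once.

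For membership, write $p\,\vecb{\psi}^{\mathrm{RT}_1}_j$, with $p$ a polynomial of degree $m$ in $\varphi_j$, as $q(\vecb{x})(\vecb{x}-\vecb{P}_j)$ with $q\in P_{m+1}(T)$, using $\vecb{\psi}^{\mathrm{RT}_0}_j=\frac{1}{d\abs{T}}(\vecb{x}-\vecb{P}_j)$; this lies in $\vecb{RT}_{m+1}(T)$ by definition of that space and places $\vecb{\psi}^{\mathrm{RT}_1}_j,\vecb{\psi}^{\mathrm{RT}_2}_j,\vecb{\psi}^{\mathrm{RT}_3}_j$ in $\vecb{RT}_1(T),\vecb{RT}_2(T),\vecb{RT}_3(T)$, and the same count applied term by term covers the composite function $\vecb{\psi}^{\mathrm{RT}_3}_4$. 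The normal trace of $\vecb{\psi}^{\mathrm{RT}_1}_j$ vanishes along all of $\partial T$ — on $F_j$ since $\varphi_j|_{F_j}=0$, and on the other faces since $\vecb{\psi}^{\mathrm{RT}_0}_j\cdot\vecb{n}_T=0$ there — and this is preserved under multiplication by scalar polynomials and under linear combinations, so every listed function is an interior bubble; that it may be taken in the complement $\widetilde{\vecb{RT}}_{k-1}^{\mathrm{int}}(T)$ rather than in the divergence-free part is just the freedom of choosing this (nonunique) complement to contain their span, which is legitimate once the divergences are shown independent. The moment identities then come from integration by parts: for $\vecb{v}\in\widetilde{\vecb{RT}}_{k-1}^{\mathrm{int}}(T)$ and $q\in P_{k-2}(T)$ the boundary term drops out, so $\int_T\mathrm{div}(\vecb{v})\,q\dx=-\int_T\vecb{v}\cdot\nabla q\dx$, and the stated relations (orthogonality against $P_0(T)$, against $\mathrm{span}\{\varphi_k\}=P_1(T)$, and against $P_2(T)$, respectively) become $\int_T\vecb{\psi}_j\cdot\nabla q\dx=0$ for all $q\in P_{k-2}(T)$, a short list of scalar identities I would check directly from the barycentric integration formula $\int_T\varphi_1^{a}\varphi_2^{b}\varphi_3^{c}\dx=2\abs{T}\,\frac{a!\,b!\,c!}{(a+b+c+2)!}$; the prefactors $5\varphi_j-2$ and $\frac{1}{7}(7\varphi_j^2-6\varphi_j+1)$ and the correction terms in $\vecb{\psi}^{\mathrm{RT}_3}_4$ are precisely the coefficients chosen to annihilate these integrals.

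For linear independence, since $\mathrm{div}$ is injective on $\widetilde{\vecb{RT}}_{k-1}^{\mathrm{int}}(T)$, the $\vecb{\psi}_j$ are independent if and only if their divergences are, and I would certify this — together with surjectivity — by forming the $k\times k$ matrix pairing $\{\mathrm{div}(\vecb{\psi}_j)\}$ against a convenient basis of $\widehat{Q}_h^\perp(T)$ and verifying that it is nonsingular. Since the moment identities already place each $\mathrm{div}(\vecb{\psi}_j)$ in the $k$-dimensional space $\widehat{Q}_h^\perp(T)$, independence forces $\{\mathrm{div}(\vecb{\psi}_j)\}$ to be a basis of $\widehat{Q}_h^\perp(T)$, and then injectivity of $\mathrm{div}$ yields that $\mathrm{div}\colon\vecb{V}_h^{\mathrm{R}}(T)\to\widehat{Q}_h^\perp(T)$ is bijective; in case (a) one moreover uses $\vecb{V}_h^{\mathrm{R}}(T)=\widetilde{\vecb{RT}}_1^{\mathrm{int}}(T)$ and $\widehat{Q}_h^\perp(T)=\widetilde{P}_1^{\mathrm{disc}}(T)$.

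The step I expect to be the real obstacle is the explicit verification in part (c): the function $\vecb{\psi}^{\mathrm{RT}_3}_4$ has to pass the orthogonality test against the whole six-dimensional space $P_2(T)$, and the nonobvious rational coefficients $2/45$, $1/70$ and $3,2,-3$ are exactly what make this succeed, so a careful bookkeeping of barycentric monomial integrals is unavoidable; likewise, the $4\times 4$ independence determinant for $\vecb{\psi}^{\mathrm{RT}_3}_1,\dots,\vecb{\psi}^{\mathrm{RT}_3}_4$ must be evaluated by hand. Everything else is structural, flowing from the bijectivity of $\mathrm{div}$ on $\widetilde{\vecb{RT}}_{k-1}^{\mathrm{int}}(T)$ and the dimension count $\dim\widehat{Q}_h^\perp(T)=k$.
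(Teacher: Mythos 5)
Your proposal is correct and follows essentially the same route as the paper: both reduce every claim to computing divergence-moment matrices on the reference element (the paper records the values $\int_T \mathrm{div}(\vecb{\psi}^{\mathrm{RT}_1}_j)\varphi_k\,d\vecb{x}=\tfrac{3\delta_{jk}-1}{24}$ and the determinants $\tfrac{1}{192}$ and $-\tfrac{1}{72900}$, testing against the face bubbles $\varphi_{k+1}\varphi_{k-1}$ in part (b)), deduce linear independence from a nonsingular pairing matrix, and obtain bijectivity from the dimension count $\dim\widehat{Q}_h^\perp(T)=k$ together with injectivity of $\mathrm{div}$ on $\widetilde{\vecb{RT}}_{k-1}^{\mathrm{int}}(T)$. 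Your deferral of the explicit arithmetic in part (c) matches the paper, which likewise leaves that case to direct (numerical) verification on the reference domain.
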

\begin{proof}[Proof of (a)]
  By simple calculations on the reference element and a dimension argument, one can obtain
  \begin{align}\label{eqn:divmoments_RT1}
    \int_{T} \mathrm{div}({\boldsymbol\psi}^{\mathrm{RT}_1}_{j}) \varphi_k \, d\boldsymbol x = \frac{3 \delta_{jk} - 1}{24}
    \quad \text{for } j,k =1,2,3.
  \end{align}
  Then the linear independence of \(\vecb{\psi}^{\mathrm{RT}_1}_1\) and \(\vecb{\psi}^{\mathrm{RT}_1}_2\) follows from $\mathrm{det}(A)=\frac{1}{192}\neq 0$ with $A=(a_{jk}):=(\int_{T} \mathrm{div}({\boldsymbol\psi}^{\mathrm{RT}_1}_{j}) \varphi_k \, d\boldsymbol x)\in\mathbb{R}^{2\times2}$. In fact, if there exists a vector $\vec{c}=(c_1,c_2)\in\mathbb{R}^2$ which makes $c_1\mathrm{div}({\boldsymbol\psi}^{\mathrm{RT}_1}_{1})+c_2\mathrm{div}({\boldsymbol\psi}^{\mathrm{RT}_1}_{2})=0$, one has $A\vec{c}=0$ and further $\vec{c}=0$, which demonstrates that $\vecb{\psi}^{\mathrm{RT}_{1}}_1$ and $\vecb{\psi}^{\mathrm{RT}_{1}}_2$ are linearly independent (and their divergence also). The second assertion follows from the Gauss theorem and the fact that the constructed functions are normal-trace-free (or alternatively by summing up \eqref{eqn:divmoments_RT1} for
  $k = 1,2,3$). Finally, the bijectivity of the divergence map follows from the orthogonality (to $P_0(T)$) and the linear independence of the divergence of \(\vecb{\psi}^{\mathrm{RT}_1}_1\) and \(\vecb{\psi}^{\mathrm{RT}_1}_2\).

\textit{Proof of (b).}
  Similar calculations as in (a) yield
  \begin{align*}
    \int_{T} \mathrm{div}(\varphi_j {\boldsymbol\psi}^{\mathrm{RT}_1}_j) \varphi_k
    \, d{\boldsymbol x}= \frac{3 \delta_{jk} - 1}{60}
    \quad \text{and (with \eqref{eqn:divmoments_RT1})} \quad
    \int_{T} \mathrm{div}({\boldsymbol\psi}^{\mathrm{RT}_2}_{j}) \varphi_k \, d{\boldsymbol x} = 0,
  \end{align*}
  for $j,k =1,2,3$. The linear independence and the bijectivity of the divergence map
  follow from looking at the higher order divergence moments
  \begin{equation}\label{quadmoments}
    a_{jk} := \int_{T} \mathrm{div}({\boldsymbol\psi}^{\mathrm{RT}_2}_{j}) \chi_k \, d{\boldsymbol x} = \frac{4\delta_{jk}-3}{180} \quad \text{for } j,k = 1,2,3,
  \end{equation}
  for the three face bubbles \(\chi_k = \varphi_{k+1} \varphi_{k-1}\) (the subscript values here should be understood in a circular way). The matrix \(A := (a_{jk})_{j,k=1,2,3}\) is
  nonsingular, since \(\det(A) = -\frac{1}{72900}\).

  \textit{Proof of (c).}
    These statements can be proven in a similar way as in the other cases or simply checked numerically, by computing
    the necessary matrices on the reference domain.
\end{proof}

\begin{lemma}[Explicit design of $\vecb{V}_h^\mathrm{R}$ for $d=3$]
  On any $T \in \mathcal{T}$, the following statements hold:
  \begin{itemize}
    \item[(a)] The functions \(\lbrace \vecb{\psi}^{\mathrm{RT}_{0}}_j \rbrace_{j = 1,2,3,4}\) and
    \(\lbrace \vecb{\psi}^{\mathrm{RT}_{1}}_j \rbrace_{j = 1,2,3} \subset \widetilde{\vecb{RT}}_{1}^{\text{int}}(T)\) are linearly independent.
For $\vecb{V}_h^\mathrm{R}(T) := \mathrm{span} \lbrace \vecb{\psi}^{\mathrm{RT}_0}_j : j=1,2,3,4\rbrace
\oplus \mathrm{span} \lbrace \vecb{\psi}^{\mathrm{RT}_1}_j : j= 1,2,3  \rbrace$
the restricted divergence operator
\begin{align*}
   \mathrm{div} : \vecb{V}_h^\mathrm{R}(T) \rightarrow P_1(T)
   \quad \text{is surjective (but not bijective).}
\end{align*}
    \item[(b)] The functions \(\lbrace \vecb{\psi}^{\mathrm{RT}_{1}}_j \rbrace_{j = 1,2,3} \subset \widetilde{\vecb{RT}}_{1}^{\text{int}}(T)\) and
    \begin{equation*}
      \vecb{\psi}_{(j,k)}^{\mathrm{RT}_{2}}=(6\varphi_{j} - 1) \vecb{\psi}_{k}^{\mathrm{RT}_{1}}+(6\varphi_{k}-1)\vecb{\psi}_{j}^{\mathrm{RT}_{1}} \in \widetilde{\vecb{RT}}_{2}^{\text{int}}(T)
      \quad \text{for } (j,k)\in \mathcal{S},
    \end{equation*}
    with \(\mathcal{S}=\big\{(1,2),(1,3),(1,4),(2,3),(2,4),(3,4)\big\}\) are linearly independent
    and it holds
  \begin{align*}
    \int_T \mathrm{div}(\vecb{\psi}^{\mathrm{RT}_1}_j) \dx & = 0
    \quad \text{for } j= 1,2,3,\\
    \int_T \mathrm{div}(\vecb{\psi}^{\mathrm{RT}_2}_{(j,k)}) \dx & = 0
    \quad \text{for } (j,k)\in \mathcal{S}.
  \end{align*}
For $\vecb{V}_h^\mathrm{R}(T) := \mathrm{span} \lbrace \vecb{\psi}^{\mathrm{RT}_1}_j : j = 1,2,3  \rbrace
\oplus \mathrm{span} \lbrace \vecb{\psi}_{(j,k)}^{\mathrm{RT}_{2}} : (j,k)\in \mathcal{S} \rbrace$
the restricted divergence operator
\begin{align*}
   \mathrm{div} : \vecb{V}_h^\mathrm{R}(T) \rightarrow \widehat{Q}_h^\perp(T) \quad \text{for } \widehat{Q}_h(T) = P_0(T)
   \quad \text{is bijective.}
\end{align*}
\end{itemize}
\end{lemma}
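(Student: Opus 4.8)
The plan is to follow the template of the proof of Lemma~\ref{lem:vhr_2d}: reduce every assertion to a finite computation on the reference tetrahedron $\widehat{T}$ and transfer it to an arbitrary $T\in\mathcal{T}$ by the Piola transformation, under which normal traces and scaled divergences of Raviart--Thomas functions transform covariantly, so that nonsingularity of divergence--moment matrices is a property of $\widehat{T}$ alone. Two elementary identities underpin everything: $\mathrm{div}(\vecb{\psi}^{\mathrm{RT}_0}_j)=1/\lvert T\rvert$ for every $j$ (a nonzero constant), and, using $\nabla\varphi_j\cdot(\vecb{x}-\vecb{P}_j)=\varphi_j-1$ together with the previous identity, the formula $\mathrm{div}(\vecb{\psi}^{\mathrm{RT}_1}_j)=\tfrac{1}{3\lvert T\rvert}(4\varphi_j-1)$ in the case $d=3$; in particular $\int_T\mathrm{div}(\vecb{\psi}^{\mathrm{RT}_1}_j)\dx=0$, and the same holds for the $\vecb{\psi}^{\mathrm{RT}_2}_{(j,k)}$ once their divergences are computed.

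\textbf{Part (a).} Linear independence of the seven functions splits into three facts: $\{\vecb{\psi}^{\mathrm{RT}_0}_j\}_{j=1}^{4}$ is a basis of $\vecb{RT}_0(T)$ by duality with the face--normal functionals; $\{\vecb{\psi}^{\mathrm{RT}_1}_j\}_{j=1}^{3}$ is linearly independent, which follows as in Lemma~\ref{lem:vhr_2d}(a) from the nonsingularity of the $3\times3$ matrix $\big(\int_T\mathrm{div}(\vecb{\psi}^{\mathrm{RT}_1}_j)\varphi_l\dx\big)=\tfrac{1}{60}(4\delta_{jl}-1)$, whose determinant is $16/60^{3}\neq0$; and $\vecb{RT}_0(T)\cap\widetilde{\vecb{RT}}_1^{\mathrm{int}}(T)=\{\vecb{0}\}$, since an $\vecb{RT}_0$ function with vanishing normal trace is zero. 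Hence $\dim\vecb{V}_h^\mathrm{R}(T)=7$. For the divergence map, the $\vecb{\psi}^{\mathrm{RT}_0}_j$ produce all constants while the $\vecb{\psi}^{\mathrm{RT}_1}_j$, $j=1,2,3$, produce three linearly independent zero--mean linear polynomials (again by the nonsingularity above), so together they span $P_1(T)$, which has dimension $4$; as every function in $\vecb{V}_h^\mathrm{R}(T)$ has divergence in $P_1(T)$, the map onto $P_1(T)$ is surjective, and it cannot be injective because $7>4$.

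\textbf{Part (b).} Here $k=3$, $d=3$, so $\widehat{Q}_h^\perp(T)$ is the $L^2$-orthogonal complement of $P_0(T)$ in $P_2(T)$, i.e.\ $\widetilde{P}_2^{\mathrm{disc}}(T)$, of dimension $\dim P_2(T)-1=9=\dim\vecb{V}_h^\mathrm{R}(T)$. First compute the divergences on $\widehat{T}$: $\mathrm{div}(\vecb{\psi}^{\mathrm{RT}_1}_j)$ as above, and $\mathrm{div}(\vecb{\psi}^{\mathrm{RT}_2}_{(j,k)})$, a quadratic polynomial that is a combination of terms $\varphi_i\varphi_l$ and $\varphi_i$. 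A direct evaluation shows each of these quadratics has vanishing element mean, which gives the two stated integral identities and shows $\mathrm{div}(\vecb{V}_h^\mathrm{R}(T))\subseteq\widetilde{P}_2^{\mathrm{disc}}(T)$. To obtain linear independence of the nine functions and bijectivity of the divergence simultaneously, I would fix a basis $\{\mu_1,\dots,\mu_9\}$ of $\widetilde{P}_2^{\mathrm{disc}}(T)$ — for instance the three zero--mean linear polynomials together with six conveniently centered quadratic bubbles (edge bubbles $\varphi_i\varphi_l$ and products involving three barycentric coordinates) — and form the $9\times9$ divergence--moment matrix $M$ with rows indexed by $\{\vecb{\psi}^{\mathrm{RT}_1}_j\}\cup\{\vecb{\psi}^{\mathrm{RT}_2}_{(j,k)}\}$ and columns by the $\mu_n$. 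If $\det M\neq0$, then any nontrivial linear combination of the nine functions has nonzero divergence; this forces linear independence of the nine functions, and since $\mathrm{div}$ is then an injective map between two spaces of dimension $9$, it is bijective onto $\widehat{Q}_h^\perp(T)=\widetilde{P}_2^{\mathrm{disc}}(T)$. A judicious choice of the $\mu_n$ (linears paired with the $\vecb{\psi}^{\mathrm{RT}_1}$ rows, quadratics with the $\vecb{\psi}^{\mathrm{RT}_2}$ rows) should render $M$ block triangular, reducing the computation to the $3\times3$ block handled in part (a) and a single $6\times6$ determinant. Finally, since $\vecb{V}_h^\mathrm{R}(T)\subseteq\vecb{RT}_2^{\mathrm{int}}(T)$ is $9$-dimensional with injective divergence, it meets the $3$-dimensional divergence--free subspace $\vecb{RT}_{2,0}^{\mathrm{int}}(T)$ trivially, and $9+3=\dim\vecb{RT}_2^{\mathrm{int}}(T)=12$; hence one may take $\widetilde{\vecb{RT}}_2^{\mathrm{int}}(T)=\vecb{V}_h^\mathrm{R}(T)$, which is consistent with the requirements stated after \eqref{eq:def_vhr}.

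\textbf{Main obstacle.} The core difficulty is the $6\times6$ (equivalently $9\times9$) determinant behind part (b): the functions $\vecb{\psi}^{\mathrm{RT}_2}_{(j,k)}$ are designed precisely so that all lower--order divergence moments cancel — hence the stated zero--mean identities — and one must verify that the remaining quadratic--moment matrix is genuinely nonsingular rather than merely generically so. As in Lemma~\ref{lem:vhr_2d} for $d=2$, I expect this to be settled by an exact computation on $\widehat{T}$, possibly exploiting the permutation symmetry of the index set $\mathcal{S}$ under relabelling of the vertices to predict the block structure and keep the bookkeeping manageable.
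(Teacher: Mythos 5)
Your proposal is correct and follows essentially the same route as the paper, whose proof of this lemma consists of a single sentence deferring to the two-dimensional arguments (reference-element divergence--moment matrices transferred by the Piola map); your explicit identities $\mathrm{div}(\vecb{\psi}^{\mathrm{RT}_1}_j)=\tfrac{1}{3|T|}(4\varphi_j-1)$, the moment matrix $\tfrac{1}{60}(4\delta_{jl}-1)$ with determinant $16/60^3$, and the vanishing means of $\mathrm{div}(\vecb{\psi}^{\mathrm{RT}_2}_{(j,k)})$ all check out. The one step you leave as a finite verification --- nonsingularity of the $9\times 9$ (or residual $6\times 6$) quadratic--moment matrix in part (b) --- is exactly the step the paper also leaves to a computation on the reference tetrahedron, so this matches the paper's own level of detail.
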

\begin{proof}
The proof of this lemma follows the same arguments as the proof in two dimensions.
\end{proof}

\section{The full scheme and the uniqueness of its solution}
\label{sec:full_scheme}

This section formulates the full scheme
and discusses the uniqueness of the solution.

\subsection{The scheme}
Consider the ansatz space
\begin{align*}
	\vecb{V}_h := \vecb{V}_h^{\mathrm{ct}} \times \vecb{V}_h^\mathrm{R},
\end{align*}
with \(\vecb{V}_h^{\mathrm{ct}} := \vecb{P}_k(\mathcal{T}) \cap \vecb{V}\) and the enrichment space \(\vecb{V}_h^\mathrm{R} \subset \vecb{H}_0(\mathrm{div},\Omega)\)
selected according to the description in Subsection~\ref{sec:enrichment_principle} (hence either \eqref{eqn:bijection_property} or \eqref{eqn:bijection_property2} holds). Given a function $\vecb{v}_h \in \vecb{V}_h^{\mathrm{ct}} \times \vecb{V}_h^\mathrm{R}$
the two components in $\vecb{V}_h^{\mathrm{ct}}$ and $\vecb{V}_h^\mathrm{R}$ are denoted by $\vecb{v}_h^{\mathrm{ct}}$ and $\vecb{v}_h^{\mathrm{R}}$, respectively.
The pressure space is \(Q_h := {{P}}_{k-1}^{\mathrm{disc}}(\mathcal{T}) \cap Q\).

Any $\vecb{v}_h^{\mathrm{R}}\in \vecb{V}_h^\mathrm{R}$ can be split into
\begin{align*}
\vecb{v}_h^{\mathrm{R}}=\vecb{v}_h^{\mathrm{RT}_0}+\widetilde{\vecb{v}}_h^{\mathrm{R}}=\sum_{F\in\mathcal{F}^0}\mathrm{dof}_F(\vecb{v}_h^{\mathrm{RT}_0})\vecb{\psi}_F+\widetilde{\vecb{v}}_h^{\mathrm{R}}\in \vecb{RT}_0(\mathcal{T})\oplus \widetilde{\vecb{RT}}_{k-1}^{\mathrm{int}}(\mathcal{T}),
\end{align*}
where $\vecb{\psi}_F\in \vecb{RT}_0(\mathcal{T})$ is the standard face basis corresponding to $F\in\mathcal{F}^0$ and the operators $\mathrm{dof}_F: \vecb{RT}_0(\mathcal{T})\rightarrow\mathbb{R}, F\in\mathcal{F}^0,$ satisfy
\begin{align*}
\mathrm{dof}_F(\vecb{v}_h^{\mathrm{RT}_0}):=\int_F \vecb{v}_h^{\mathrm{RT}_0}\cdot\vecb{n}_F \ds\Big/\int_F \vecb{\psi}_F\cdot \vecb{n}_F\ds\quad\text{for all } \vecb{v}_h^{\mathrm{RT}_0}\in \vecb{RT}_0(\mathcal{T}),
\end{align*}
with $\vecb{n}_F$ being an unit normal vector of $F$. For $k=1$, $\widetilde{\vecb{v}}_h^{\mathrm{R}}$ is always zero; for $k\geq d$, $\vecb{v}_h^{\mathrm{RT}_0}$ is always zero.

Define $a(\vecb{u},\vecb{v}):=(\nabla \vecb{u},\nabla\vecb{v})$ for all $\vecb{u},\vecb{v}\in\vecb{V}$. For \(\vecb{v}_h := (\vecb{v}_h^{\mathrm{ct}}, \vecb{v}_h^{\mathrm{R}}) \in \vecb{V}_h^{\mathrm{ct}} \times \vecb{V}_h^\mathrm{R}\) consider
the (non-symmetric) bilinear form
\begin{align*}
  a_h(\vecb{u}_h,\vecb{v}_h)
  := a(\vecb{u}_h^{\mathrm{ct}}, \vecb{v}_h^{\mathrm{ct}})
     - (\Delta_\text{pw} \vecb{u}_h^{\mathrm{ct}}, \vecb{v}_h^{\mathrm{R}})
     + (\Delta_\text{pw} \vecb{v}_h^{\mathrm{ct}}, \vecb{u}_h^{\mathrm{R}})
     + a_h^D(\vecb{u}_h^{\mathrm{RT}_0}, \vecb{v}_h^{\mathrm{RT}_0}),
\end{align*}
where \(\Delta_\text{pw}\) is the piecewise Laplacian operator with respect to \(\mathcal{T}\). Note that the Laplacian terms vanish for $k=1$, which is the case in \cite{li2021low}.
The stabilization $a_h^D: \vecb{RT}_0(\mathcal{T})\times \vecb{RT}_0(\mathcal{T})\rightarrow \mathbb{R}$ is the same as \cite{li2021low} and has several equivalent choices which satisfy
\begin{align}\label{eqn:ahD_equivalence}
  a_h^D(\vecb{v}_h^{\mathrm{RT}_0}, \vecb{v}_h^{\mathrm{RT}_0}) \approx \|\alpha^{1/2} h^{-1}_\mathcal{T} \vecb{v}_h^{\mathrm{RT}_0} \|^2,
\end{align}
where $\alpha$ is a given positive piecewise constant.
For simplicity, $\alpha$ is chosen to be constant
on the whole domain $\Omega$.
In numerical experiments, we choose the form which results in a diagonal block
\begin{align*}
  a_h^D(\vecb{u}_h^{\mathrm{RT}_0}, \vecb{v}_h^{\mathrm{RT}_0})
  := \alpha \sum_{F\in\mathcal{F}^0}\mathrm{dof}_F(\vecb{u}_h^{\mathrm{RT}_0}) \mathrm{dof}_F(\vecb{v}_h^{\mathrm{RT}_0}) \, (\mathrm{div} \vecb{\psi}_F, \mathrm{div} \vecb{\psi}_F).
\end{align*}
Note, that the method is stabilization-free for $k\geq d$.
\begin{remark}
As in DG methods for elliptic equations (cf. \cite{ABCM2002}) or for the viscosity term in
the Stokes equations (cf. \cite[Sec. 4.4]{JLMNR:2017}), the second term and the fourth term of
$a_h$ are added to guarantee consistency and coercivity, respectively.
The choice of the third term can be different. Here, a term which is
skew-symmetric to the second term is used. However, one can also employ a symmetric one
(i.e., $-(\Delta_\text{pw} \vecb{v}_h^{\mathrm{ct}}, \vecb{u}_h^{\mathrm{R}})$). The analysis of the latter case is
indeed very similar to the skew-symmetric case except
that one should require that $\alpha$ is sufficiently large to guarantee coercivity and a
similar stabilization should also be added to the
$\widetilde{\vecb{RT}}_{k-1}^{\mathrm{int}}(\mathcal{T})$ part then. The reason for choosing the
skew-symmetric form here is that our method is parameter-free in this case for $k\geq d$ due to
the divergence constraint. The numerical experiments show that this non-symmetry does
not affect the convergence rate of the $L^2$ norm.
\end{remark}

On the product space, the bilinear form for the divergence constraint reads
\begin{align*}
  b(\vecb{v}_h,q_h) := -(\mathrm{div}(\vecb{v}_h^{\mathrm{ct}}+\vecb{v}_h^{\mathrm{R}}), q_h).
\end{align*}
The full discrete problem seeks \((\vecb{u}_h,p_h) \in \vecb{V}_{h}\times Q_h\), such that
\begin{equation}\label{eq:fullscheme}
\begin{aligned}
  \nu a_h(\vecb{u}_h, \vecb{v}_h) + b(\vecb{v}_h,p_h)
  & = (\vecb{f}, \vecb{v}_h)
& \text{for all } \vecb{v}_h \in \vecb{V}_{h},\\
  b(\vecb{u}_h, q_h)
  & = 0
& \text{for all } q_h \in Q_{h}.
\end{aligned}
\end{equation}
Here and throughout the rest of the paper, the $\vecb{L}^2$ inner product on the right-hand side
is to be understood as
\begin{align*}
(\vecb{f}, \vecb{v}_h) := (\vecb{f}, \vecb{v}_h^\mathrm{ct}+\vecb{v}_h^\mathrm{R}).
\end{align*}
In the space of discretely divergence-free functions
\begin{align*}
  \vecb{V}_{h,0} := &\left\lbrace \vecb{v}_h = (\vecb{v}_h^{\mathrm{ct}}, \vecb{v}_h^{\mathrm{R}}) \in \vecb{V}_h : b(\vecb{v}_h,q_h) = 0
  \text { for all } q_h\in Q_h \right\rbrace\\
  = & \left\lbrace \vecb{v}_h = (\vecb{v}_h^{\mathrm{ct}}, \vecb{v}_h^{\mathrm{R}}) \in \vecb{V}_h : \mathrm{div}(\vecb{v}_h^{\mathrm{ct}} + \vecb{v}_h^{\mathrm{R}}) = 0
   \right\rbrace,
\end{align*}
the above problem is also equivalent to
\begin{equation}\label{eq:reduced_scheme}
  \nu a_h(\vecb{u}_h, \vecb{v}_h)
   = (\vecb{f}, \vecb{v}_h)
\quad \text{for all } \vecb{v}_h \in \vecb{V}_{h,0}.
\end{equation}
Note, that \(a_h\) can be naturally extended to \(a_h : (\vecb{V} \times \vecb{V}_h^\mathrm{R}) \times (\vecb{V} \times \vecb{V}_h^\mathrm{R}) \rightarrow \mathbb{R}\) and similarly for \(b\).
We introduce two seminorms $||| \bullet |||$ and $|||\bullet |||_\star$ on $\vecb{V} \times \vecb{V}_h^\mathrm{R}$ which are characterized by
\begin{equation}\label{eq:norm}
   ||| \vecb{v} |||^2 :=a_h(\vecb{v}, \vecb{v})\quad\text{and}\quad
   ||| \vecb{v} |||^2_\star := ||| \vecb{v} |||^2 +\|h_\mathcal{T}\Delta_\text{pw} \vecb{v}^{\mathrm{ct}}\|^2+\|\mathrm{div}(\widetilde{\vecb{v}}^\mathrm{R})\|^2,
\end{equation}
for all $\vecb{v}=:(\vecb{v}^\mathrm{ct}, \vecb{v}^\mathrm{R})=:(\vecb{v}^\mathrm{ct}, \vecb{v}^{\mathrm{RT}_0}+\widetilde{\vecb{v}}^\mathrm{R})\in \vecb{V} \times \vecb{V}_h^\mathrm{R}$, respectively.
\begin{lemma}
  \label{lem:norm_uniqueness}
  $||| \bullet |||$ is a norm on $\vecb{V}_{h,0}$.
\end{lemma}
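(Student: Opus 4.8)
The plan is to separate the two genuinely distinct issues hidden in the statement: that $|||\bullet|||$ is a seminorm on the whole space $\vecb{V}\times\vecb{V}_h^\mathrm{R}$, and that it is moreover \emph{definite} on the subspace $\vecb{V}_{h,0}$. For the first point I would observe that the two skew-symmetric cross terms of $a_h$ cancel on the diagonal, so that for $\vecb{v}=(\vecb{v}^\mathrm{ct},\vecb{v}^\mathrm{R})=(\vecb{v}^\mathrm{ct},\vecb{v}^{\mathrm{RT}_0}+\widetilde{\vecb{v}}^\mathrm{R})$ one simply has $|||\vecb{v}|||^2=a_h(\vecb{v},\vecb{v})=\|\nabla\vecb{v}^\mathrm{ct}\|^2+a_h^D(\vecb{v}^{\mathrm{RT}_0},\vecb{v}^{\mathrm{RT}_0})$. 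By the equivalence \eqref{eqn:ahD_equivalence} the second summand is nonnegative (and symmetric), so $|||\bullet|||^2$ is the quadratic form of a symmetric positive semidefinite bilinear form; homogeneity and the triangle inequality then follow from Cauchy--Schwarz for that form. This settles that $|||\bullet|||$ is a seminorm, and reduces the lemma to showing $|||\vecb{v}|||=0\Rightarrow\vecb{v}=\vecb{0}$ for $\vecb{v}\in\vecb{V}_{h,0}$.

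Assuming $|||\vecb{v}|||=0$, both summands above vanish. From $\|\nabla\vecb{v}^\mathrm{ct}\|=0$ together with $\vecb{v}^\mathrm{ct}\in\vecb{H}_0^1(\Omega)$ I would invoke the Friedrichs/Poincar\'e inequality to conclude $\vecb{v}^\mathrm{ct}=\vecb{0}$. From $a_h^D(\vecb{v}^{\mathrm{RT}_0},\vecb{v}^{\mathrm{RT}_0})=0$ and \eqref{eqn:ahD_equivalence} I get $\|\alpha^{1/2}h_\mathcal{T}^{-1}\vecb{v}^{\mathrm{RT}_0}\|=0$, and since $\alpha$ and $h_\mathcal{T}$ are positive this gives $\vecb{v}^{\mathrm{RT}_0}=\vecb{0}$. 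For $k=1$ there is no $\widetilde{\vecb{v}}^\mathrm{R}$-part and the argument is already complete.

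For $k\ge 2$ the remaining component $\widetilde{\vecb{v}}^\mathrm{R}\in\widetilde{\vecb{RT}}_{k-1}^{\mathrm{int}}(\mathcal{T})$ is not seen by $|||\bullet|||$ at all, so here the hypothesis $\vecb{v}\in\vecb{V}_{h,0}$ is essential: it forces $\mathrm{div}(\vecb{v}^\mathrm{ct}+\vecb{v}^\mathrm{R})=0$, and with $\vecb{v}^\mathrm{ct}=\vecb{0}$ and $\vecb{v}^{\mathrm{RT}_0}=\vecb{0}$ this reads $\mathrm{div}(\widetilde{\vecb{v}}^\mathrm{R})=0$. Since the divergence operator is injective on $\widetilde{\vecb{RT}}_{k-1}^{\mathrm{int}}(\mathcal{T})$ (recorded in Section~\ref{sec:notation}), we obtain $\widetilde{\vecb{v}}^\mathrm{R}=\vecb{0}$, hence $\vecb{v}^\mathrm{R}=\vecb{0}$ and $\vecb{v}=\vecb{0}$. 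I would also note the stabilization-free case $k\ge d$ is covered verbatim by this last step (there $\vecb{v}^{\mathrm{RT}_0}$ is absent and $\vecb{v}^\mathrm{R}=\widetilde{\vecb{v}}^\mathrm{R}$).

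There is no real analytic obstacle here; the one point that requires care is the bookkeeping across the three regimes of \eqref{eq:def_vhr}, namely checking in each case that the part of $\vecb{v}^\mathrm{R}$ not controlled by $a_h^D$ lies in a space on which $\mathrm{div}$ is injective --- which is precisely how $\widetilde{\vecb{RT}}_{k-1}^{\mathrm{int}}(\mathcal{T})$ was constructed. Beyond that, the proof only assembles three already-available facts: coercivity of $a(\cdot,\cdot)$ on $\vecb{H}_0^1(\Omega)$, positivity of $a_h^D$ from \eqref{eqn:ahD_equivalence}, and injectivity of the divergence on the tilde-space.
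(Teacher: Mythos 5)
Your proof is correct and follows essentially the same route as the paper: identify $a_h(\vecb{v}_h,\vecb{v}_h)=\|\nabla\vecb{v}_h^{\mathrm{ct}}\|^2+a_h^D(\vecb{v}_h^{\mathrm{RT}_0},\vecb{v}_h^{\mathrm{RT}_0})$ after cancellation of the skew-symmetric cross terms, conclude $\vecb{v}_h^{\mathrm{ct}}=\vecb{v}_h^{\mathrm{RT}_0}=\vecb{0}$ from the two constituent norms, and then use the divergence constraint in $\vecb{V}_{h,0}$ together with the injectivity of $\mathrm{div}$ on $\widetilde{\vecb{RT}}_{k-1}^{\mathrm{int}}(\mathcal{T})$ to kill $\widetilde{\vecb{v}}_h^{\mathrm{R}}$. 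You merely spell out the seminorm property and the case bookkeeping more explicitly than the paper does.
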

\begin{proof}
It suffices to prove that
\begin{displaymath}
a_{h}(\vecb{v}_{h},\vecb{v}_{h})=0 \Rightarrow \vecb{v}_h^{\mathrm{ct}}=\vecb{v}_h^{\mathrm{R}}=\vecb{v}_{h}^{\mathrm{RT}_0}+\widetilde{\vecb{v}}_{h}^{\mathrm{R}}=\vecb{0}.
\end{displaymath}
Consider $\vecb{v}_{h} \in \vecb{V}_{h,0}$ with $0 = a_{h}(\vecb{v}_{h},\vecb{v}_{h})=\|\nabla\vecb{v}_h^{\mathrm{ct}}\|^{2}+\| \vecb{v}_h^{\mathrm{RT}_0}\|^2_D$, where $\|\bullet\|_D$ is a natural norm on $\vecb{RT}_0(\mathcal{T})\cap \vecb{H}_0(\mathrm{div},\Omega)$ from $a_h^D(\bullet,\bullet)$.

First, $\| \nabla \bullet \|$ and $\| \bullet \|_D$ being norms implies
$\vecb{v}_h^{\mathrm{ct}}=\vecb{v}_{h}^{\mathrm{RT}_0}=\vecb{0}$.

Second, since also $\mathrm{div}(\vecb{v}_{h})=\mathrm{div}(\vecb{v}_h^{\mathrm{ct}})+\mathrm{div}(\vecb{v}_h^{\mathrm{RT}_0})+\mathrm{div}(\widetilde{\vecb{v}}_{h}^{\mathrm{R}})=0$,
we have $\mathrm{div}(\widetilde{\vecb{v}}_{h}^{\mathrm{R}})=0$ and by injectivity of \(\mathrm{div}|_{\widetilde{\vecb{RT}}_{k-1}^{\mathrm{int}}(\mathcal{T})}\) it follows $\widetilde{\vecb{v}}_{h}^{\mathrm{R}}=\vecb{0}$ also. This completes the proof.
\end{proof}

\begin{lemma}
  \label{lem:norm}
  $||| \bullet |||_\star$ is a norm on $\vecb{V} \times \vecb{V}_h^\mathrm{R}$.
\end{lemma}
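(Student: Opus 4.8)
The plan is to check the norm axioms directly; only definiteness requires an argument. First I would record that the two Laplacian cross terms in $a_h(\vecb{v},\vecb{v})$, namely $-(\Delta_\text{pw}\vecb{v}^\mathrm{ct},\vecb{v}^\mathrm{R})$ and $+(\Delta_\text{pw}\vecb{v}^\mathrm{ct},\vecb{v}^\mathrm{R})$, cancel, so that
\begin{align*}
  |||\vecb{v}|||_\star^2
  = \|\nabla\vecb{v}^\mathrm{ct}\|^2 + \|\vecb{v}^{\mathrm{RT}_0}\|_D^2
  + \|h_\mathcal{T}\Delta_\text{pw}\vecb{v}^\mathrm{ct}\|^2 + \|\mathrm{div}(\widetilde{\vecb{v}}^\mathrm{R})\|^2 ,
\end{align*}
where $\|\bullet\|_D$ is the norm on $\vecb{RT}_0(\mathcal{T})\cap\vecb{H}_0(\mathrm{div},\Omega)$ induced by $a_h^D$, as in the proof of Lemma~\ref{lem:norm_uniqueness}. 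This exhibits $|||\vecb{v}|||_\star^2$ as a sum of squares of seminorms of $\vecb{v}$; hence $|||\bullet|||_\star$ is a seminorm, with absolute homogeneity immediate and the triangle inequality following by Minkowski's inequality applied to the map $\vecb{v}\mapsto(\nabla\vecb{v}^\mathrm{ct},\,\vecb{v}^{\mathrm{RT}_0},\,h_\mathcal{T}\Delta_\text{pw}\vecb{v}^\mathrm{ct},\,\mathrm{div}(\widetilde{\vecb{v}}^\mathrm{R}))$ into the appropriate product $L^2$-type space.

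It then remains to prove definiteness, i.e.\ $|||\vecb{v}|||_\star = 0 \Rightarrow \vecb{v} = \vecb{0}$. Assuming $|||\vecb{v}|||_\star = 0$ for $\vecb{v} = (\vecb{v}^\mathrm{ct},\vecb{v}^\mathrm{R}) = (\vecb{v}^\mathrm{ct},\vecb{v}^{\mathrm{RT}_0}+\widetilde{\vecb{v}}^\mathrm{R}) \in \vecb{V}\times\vecb{V}_h^\mathrm{R}$, each of the four nonnegative summands above vanishes. From $\|\nabla\vecb{v}^\mathrm{ct}\| = 0$ and the Poincar\'e--Friedrichs inequality on $\vecb{H}_0^1(\Omega)$ one gets $\vecb{v}^\mathrm{ct} = \vecb{0}$. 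From $\|\vecb{v}^{\mathrm{RT}_0}\|_D = 0$ and $\|\bullet\|_D$ being a norm one gets $\vecb{v}^{\mathrm{RT}_0} = \vecb{0}$ (note $\vecb{v}^{\mathrm{RT}_0} = \vecb{v}^\mathrm{R} - \widetilde{\vecb{v}}^\mathrm{R}$ indeed belongs to $\vecb{RT}_0(\mathcal{T})\cap\vecb{H}_0(\mathrm{div},\Omega)$, since both $\vecb{v}^\mathrm{R}$ and the interior bubble part $\widetilde{\vecb{v}}^\mathrm{R}$ have vanishing normal trace on $\partial\Omega$). Finally $\|\mathrm{div}(\widetilde{\vecb{v}}^\mathrm{R})\| = 0$ yields $\mathrm{div}(\widetilde{\vecb{v}}^\mathrm{R}) = 0$, and since $\mathrm{div}$ is injective on $\widetilde{\vecb{RT}}_{k-1}^\mathrm{int}(\mathcal{T})$ (Subsection~\ref{sec:enrichment_principle}), $\widetilde{\vecb{v}}^\mathrm{R} = \vecb{0}$. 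Hence $\vecb{v} = (\vecb{0},\vecb{0})$.

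I do not expect a genuine obstacle here. The only subtlety worth flagging is the contrast with Lemma~\ref{lem:norm_uniqueness}: there the discrete divergence constraint $\mathrm{div}(\vecb{v}_h^\mathrm{ct}+\vecb{v}_h^\mathrm{R})=0$ was available and forced $\mathrm{div}(\widetilde{\vecb{v}}_h^\mathrm{R})=0$, whereas on the full space $\vecb{V}\times\vecb{V}_h^\mathrm{R}$ no such constraint is present, which is precisely why the extra term $\|\mathrm{div}(\widetilde{\vecb{v}}^\mathrm{R})\|^2$ is built into the definition of $|||\bullet|||_\star$. The term $\|h_\mathcal{T}\Delta_\text{pw}\vecb{v}^\mathrm{ct}\|^2$ plays no role in the definiteness argument—it vanishes automatically once $\vecb{v}^\mathrm{ct}=\vecb{0}$—but is needed later in the a priori error analysis.
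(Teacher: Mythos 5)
Your proof is correct and follows essentially the same route as the paper: the extra term $\|\mathrm{div}(\widetilde{\vecb{v}}^{\mathrm{R}})\|^2$ forces $\mathrm{div}(\widetilde{\vecb{v}}^{\mathrm{R}})=0$, hence $\widetilde{\vecb{v}}^{\mathrm{R}}=\vecb{0}$ by injectivity of the divergence on $\widetilde{\vecb{RT}}_{k-1}^{\mathrm{int}}(\mathcal{T})$, and the remaining components vanish exactly as in Lemma~\ref{lem:norm_uniqueness}. You merely spell out the seminorm axioms and the cancellation of the skew-symmetric Laplacian terms, which the paper leaves implicit.
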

\begin{proof}
The additional term $\| \mathrm{div}(\widetilde{\vecb{v}}^{\mathrm{R}}) \|^2$ implies $\mathrm{div}(\widetilde{\vecb{v}}^{\mathrm{R}})=0$ also for any $\vecb{v}\in \vecb{V} \times \vecb{V}_h^\mathrm{R}$ if $||| \vecb{v} |||_\star=0$. Then Lemma~\ref{lem:norm} follows with a similar analysis as in Lemma~\ref{lem:norm_uniqueness}.
\end{proof}

Note that the two norms are equivalent on $\vecb{V}_{h,0}$ due to an inverse inequality and $\mathrm{div}(\widetilde{\vecb{v}}_h^\mathrm{R})=-\mathrm{div}(\vecb{v}_h^\mathrm{ct})-\mathrm{div}(\vecb{v}_h^\mathrm{RT_0})$ for $\vecb{v}_h\in \vecb{V}_{h,0}$. The Laplacian term in $|||\bullet|||_\star$ is mainly introduced to prove the boundedness of $a_h$ on $(\vecb{V} \times \vecb{V}_h^\mathrm{R}) \times (\vecb{V} \times \vecb{V}_h^\mathrm{R})$.

\begin{remark}[Remark on uniqueness]
It has to be stressed that, apart from the lowest-order case as shown in \cite{li2021low}, the spaces \(\vecb{V}_h^{\mathrm{ct}}\) and \(\vecb{V}_h^\mathrm{R}\)
might have a non-zero intersection.
That means there might be some
functions \(\vecb{w} \in \vecb{V}^{\mathrm{ct}}_h \cap\vecb{V}^{\mathrm{R}}_h\) which can be represented by
\((\beta\vecb{w},(1-\beta)\vecb{w})\) for arbitrary $\beta\in\mathbb{R}$ in the product space \(\vecb{V}_h^{\mathrm{ct}} \times \vecb{V}_h^\mathrm{R}\).
Such an example can be found, e.g.,\ for $k=2$ on a partition as in Fig. \ref{figure1}. It is not difficult to verify that $\varphi_{3}\vecb{\psi}^{\mathrm{RT}_0}_3\in \vecb{V}_h^\mathrm{R}$ is also in $\vecb{V}_h^{\mathrm{ct}}$, because $\vecb{\psi}^{\mathrm{RT}_0}_3=2(\vecb{x}-\vecb{m})$ on the whole domain with $\vecb{m}=(0.5,0.5)^{\top}$ (which is certainly continuous) and $\varphi_3$ is continuous with vanishing boundary value.
\begin{figure}
\centering
\includegraphics[width=0.25\textwidth, height=0.25\textwidth]{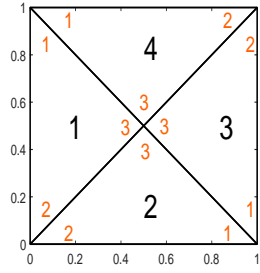}
\caption{A mesh of $(0,1)\times(0,1)$. Orange numbers denote the local series number of vertices, and black numbers denote the element series number.}
\label{figure1}
\end{figure}
However, if a solution exists (which is proven also later) our method selects a unique representation in $\vecb{V}_{h,0}$ according
to Lemma~\ref{lem:norm_uniqueness}.
\end{remark}

\section{Inf-sup stability}
\label{sec:infsup}
This section proves the inf-sup stability of the proposed method for which we assume inf-sup stability of $\vecb{V}^\mathrm{ct}_{h} \times \widehat{Q}_h$ for some auxiliary pressure space
\(\widehat{Q}_h\) as explained in Subsection~\ref{sec:enrichment_principle}.
We first introduce two operators $\mathcal{R}: \vecb{V}\rightarrow \vecb{V}_h^\mathrm{R}\cap\widetilde{\vecb{RT}}_{k-1}^{\mathrm{int}}(\mathcal{T})$ and
$\widetilde{\mathcal{R}}: Q\rightarrow \vecb{V}_h^\mathrm{R}\cap\widetilde{\vecb{RT}}_{k-1}^{\mathrm{int}}(\mathcal{T})$
which are characterized by
\begin{align*}
\mathrm{div} (\widetilde{\mathcal{R}}q):=\pi_{\widehat{Q}_h^\perp\cap \widetilde{P}_{k-1}^{\mathrm{disc}}(\mathcal{T})} q=\begin{cases}\pi_{\widehat{Q}_h^\perp} q &k\geq d,\\
\pi_{\widetilde{P}_{k-1}^{\mathrm{disc}}(\mathcal{T})} q \quad &k<d,
\end{cases}
\quad\text{for all } q\in Q,
\end{align*}
with $\widetilde{P}_{k-1}^{\mathrm{disc}}(\mathcal{T})$ as defined in \eqref{eq:def_tildepk_disc} and
\begin{align*}
\mathcal{R}\vecb{v}:=\widetilde{\mathcal{R}}\left(\mathrm{div}\left(\vecb{v}\right)\right) \quad
\text{for all } \vecb{v}\in \vecb{V}.
\end{align*}
Due to the bijective property of the divergence operator with respect to the corresponding spaces, these two operators are well-defined. From the definition of $\mathcal{R}$ and $\widetilde{\mathcal{R}}$ one has
\begin{equation}\label{eq:operatorR_commute}
\mathrm{div}\left(\mathcal{R}\vecb{v}\right)=\pi_{\widehat{Q}_h^\perp\cap \widetilde{P}_{k-1}^{\mathrm{disc}}(\mathcal{T})} \mathrm{div}\left(\vecb{v}\right).
\end{equation}
Meanwhile, according to the stability of $L^2$ projection, we have
\begin{equation}\label{ieq:L2stability}
\|\mathrm{div}(\widetilde{\mathcal{R}}q)\|\leq \|q\| \quad\text{and} \quad \left\|\mathrm{div}\left(\mathcal{R}\vecb{v}\right)\right\|\leq \left\|\mathrm{div}(\vecb{v})\right\|\leq \left\|\nabla\vecb{v}\right\| \quad\text{for all } q\in Q, \vecb{v}\in \vecb{V}.
\end{equation}

\subsection{Fortin operator}
  Due to the inf-sup stability of \((\vecb{V}_h^{\mathrm{ct}},\widehat{Q}_h)\), there exists an $\vecb{H}^1$-stable Fortin operator $\Pi^{\mathrm{ct}} : \vecb{V} \rightarrow {\vecb{V}}_{h}^{\mathrm{ct}}$, i.e.,
  \begin{equation}\label{ieq:H1stability_ct}
  \|\nabla\Pi^{\mathrm{ct}}\vecb{v}\|\leq C_{\mathrm{F}}^{\mathrm{ct}} \|\nabla\vecb{v}\|,
  \end{equation}
  and
  \begin{equation}\label{eq:Fortin_ct}
     (\mathrm{div}(\Pi^{\mathrm{ct}} \vecb{v}), q_h)=(\mathrm{div}(\vecb{v}), q_h) \quad \text{for all } q_h \in \widehat{Q}_h.
  \end{equation}
  In cases where \(\widehat{Q}_h = \lbrace 0 \rbrace\), a quasi-interpolation operator \(\Pi^{\mathrm{ct}}\), e.g.\ the one from \cite[Section 4.8]{BrennerScott:2008},
  is chosen, which satisfies
  \begin{equation}\label{ieq:appro_pict}
   \quad \|\vecb{v}-\Pi^{\mathrm{ct}} \vecb{v}\|_{T} \lesssim h_{T}\left\|\nabla\vecb{v}\right\|_{\omega(T)}\quad\text{for all } T\in \mathcal{T},\vecb{v}\in\vecb{V},
  \end{equation}
  with $\omega(T)$ being the union of all nodal patches of vertices in $T$ \cite{LLMS2017}.
  \begin{lemma}\label{lem:Fortin_operator}
   The operator \(\Pi : \vecb{V} \rightarrow \vecb{V}_h\) defined by
  \begin{align*}
    \Pi \vecb{v} := \begin{cases}\left(\Pi^{\mathrm{ct}} \vecb{v}, \mathcal{R} \widehat{\vecb{v}}\right)\quad &k\geq d,\\
    \left(\Pi^{\mathrm{ct}} \vecb{v},\Pi^{\mathrm{RT_0}}\widehat{\vecb{v}}+ \mathcal{R} \widehat{\vecb{v}}\right)\quad &k<d,
    \end{cases}\quad\text{for all } \vecb{v}\in \vecb{V},
  \end{align*}
  is a Fortin operator, i.e.,
  \begin{align*}
  |||\Pi\vecb{v}|||_\star\leq C_F\|\nabla\vecb{v}\|\quad \text{and} \quad b(\Pi\vecb{v},q_h)=-(\mathrm{div}(\vecb{v}),q_h) \quad \text{for all } \vecb{v}\in \vecb{V}, q_h\in Q_h,
  \end{align*}
  where $\widehat{\vecb{v}}:=(1 - \Pi^{\mathrm{ct}})\vecb{v}$ and $\Pi^{\mathrm{RT_0}}$ is the standard lowest-order Raviart--Thomas interpolation.
  \end{lemma}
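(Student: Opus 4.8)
The plan is to verify the two defining properties of a Fortin operator in turn: the commutation identity $b(\Pi\vecb{v},q_h) = -(\mathrm{div}(\vecb{v}),q_h)$, which is purely structural, and the $|||\bullet|||_\star$-stability bound. Throughout write $\widehat{\vecb{v}} := (1-\Pi^{\mathrm{ct}})\vecb{v} \in \vecb{V}$, so that $\vecb{v} = \Pi^{\mathrm{ct}}\vecb{v} + \widehat{\vecb{v}}$ and $\mathrm{div}(\vecb{v}) = \mathrm{div}(\Pi^{\mathrm{ct}}\vecb{v}) + \mathrm{div}(\widehat{\vecb{v}})$. A preliminary remark is that $\Pi\vecb{v}$ indeed lies in $\vecb{V}_h = \vecb{V}_h^{\mathrm{ct}} \times \vecb{V}_h^\mathrm{R}$: the first component is clear, and for the enrichment component one uses that $\mathcal{R}\widehat{\vecb{v}} \in \vecb{V}_h^\mathrm{R} \cap \widetilde{\vecb{RT}}_{k-1}^{\mathrm{int}}(\mathcal{T})$ by construction, while for $k<d$ the interpolant $\Pi^{\mathrm{RT_0}}\widehat{\vecb{v}}$ has vanishing normal trace on $\partial\Omega$ (because $\widehat{\vecb{v}}\in\vecb{V}$) and hence belongs to $\vecb{RT}_0(\mathcal{T})\cap\vecb{H}_0(\mathrm{div},\Omega)\subseteq\vecb{V}_h^\mathrm{R}$.

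For the commutation identity, the first step is to record that in both cases the divergence of the enrichment component of $\Pi\vecb{v}$ equals $\pi_{\widehat{Q}_h^\perp}\mathrm{div}(\widehat{\vecb{v}})$. For $k\geq d$ this follows directly from the definition of $\mathcal{R}$, using $\widehat{Q}_h^\perp \subseteq \widetilde{P}_{k-1}^{\mathrm{disc}}(\mathcal{T})$ (which holds because $\widehat{Q}_h \supseteq P_0^{\mathrm{disc}}(\mathcal{T})\cap Q$). For $k<d$, where $\widehat{Q}_h = \{\vecb{0}\}$ and $\widehat{Q}_h^\perp = Q_h$, one combines the commuting-diagram property $\mathrm{div}(\Pi^{\mathrm{RT_0}}\vecb{w}) = \pi_{P_0^{\mathrm{disc}}(\mathcal{T})}\mathrm{div}(\vecb{w})$ of the lowest-order Raviart--Thomas interpolation with $\mathrm{div}(\mathcal{R}\widehat{\vecb{v}}) = \pi_{\widetilde{P}_{k-1}^{\mathrm{disc}}(\mathcal{T})}\mathrm{div}(\widehat{\vecb{v}})$ and the $L^2$-orthogonal splitting $Q_h = (P_0^{\mathrm{disc}}(\mathcal{T})\cap Q)\oplus_{L^2}\widetilde{P}_{k-1}^{\mathrm{disc}}(\mathcal{T})$, noting $\mathrm{div}(\widehat{\vecb{v}})$ has zero global mean so that its piecewise-constant projection stays in $Q$. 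Given this, for $q_h\in Q_h$ split $q_h = \widehat{q}_h + \widehat{q}_h^\perp$ along $Q_h = \widehat{Q}_h \oplus_{L^2}\widehat{Q}_h^\perp$: the enrichment contribution to $b(\Pi\vecb{v},\cdot)$ pairs to zero against $\widehat{q}_h$ (orthogonality to $\widehat{Q}_h$) and to $-(\mathrm{div}(\widehat{\vecb{v}}),\widehat{q}_h^\perp)$ against $\widehat{q}_h^\perp$ (defining property of the $L^2$ projection); combining this with \eqref{eq:Fortin_ct} for the conforming part against $\widehat{q}_h$ and with $\mathrm{div}(\vecb{v}) = \mathrm{div}(\Pi^{\mathrm{ct}}\vecb{v}) + \mathrm{div}(\widehat{\vecb{v}})$ against $\widehat{q}_h^\perp$, both pieces assemble to $b(\Pi\vecb{v},q_h) = -(\mathrm{div}(\vecb{v}),q_h)$.

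For the stability bound, I would expand $|||\Pi\vecb{v}|||_\star^2$ and use that the two Laplacian cross-terms of $a_h$ cancel on the diagonal, obtaining
\begin{equation*}
  |||\Pi\vecb{v}|||_\star^2 = \|\nabla\Pi^{\mathrm{ct}}\vecb{v}\|^2 + a_h^D(\Pi^{\mathrm{RT_0}}\widehat{\vecb{v}},\Pi^{\mathrm{RT_0}}\widehat{\vecb{v}}) + \|h_\mathcal{T}\Delta_{\text{pw}}\Pi^{\mathrm{ct}}\vecb{v}\|^2 + \|\mathrm{div}(\mathcal{R}\widehat{\vecb{v}})\|^2,
\end{equation*}
where the $a_h^D$ term is present only for $k<d$ and the $\widetilde{\vecb{RT}}_{k-1}^{\mathrm{int}}(\mathcal{T})$-part of the enrichment component of $\Pi\vecb{v}$ equals $\mathcal{R}\widehat{\vecb{v}}$ in both cases. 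The first term is $\leq C_{\mathrm{F}}^{\mathrm{ct}}\|\nabla\vecb{v}\|$ by \eqref{ieq:H1stability_ct} (by local $H^1$-stability of the quasi-interpolation when $k<d$); the third is controlled by the first via a piecewise inverse inequality $\|h_T\Delta\vecb{w}\|_T \lesssim \|\nabla\vecb{w}\|_T$ for piecewise polynomials; and the fourth satisfies $\|\mathrm{div}(\mathcal{R}\widehat{\vecb{v}})\| \leq \|\mathrm{div}(\widehat{\vecb{v}})\| \leq \|\nabla\widehat{\vecb{v}}\| \leq (1+C_{\mathrm{F}}^{\mathrm{ct}})\|\nabla\vecb{v}\|$ by \eqref{ieq:L2stability}. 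The only genuinely new ingredient is the $a_h^D$ term for $k<d$: by \eqref{eqn:ahD_equivalence} it reduces to bounding $\|h_\mathcal{T}^{-1}\Pi^{\mathrm{RT_0}}\widehat{\vecb{v}}\|$, for which one invokes the standard local Raviart--Thomas estimate $\|\Pi^{\mathrm{RT_0}}\vecb{w}\|_T \lesssim \|\vecb{w}\|_T + h_T |\vecb{w}|_{H^1(T)}$ (from $\vecb{P}_0(T)\subset\vecb{RT}_0(T)$ and Bramble--Hilbert) with $\vecb{w} = \widehat{\vecb{v}}$, followed by the approximation property \eqref{ieq:appro_pict} $\|\widehat{\vecb{v}}\|_T \lesssim h_T\|\nabla\vecb{v}\|_{\omega(T)}$ and local $H^1$-stability $|\widehat{\vecb{v}}|_{H^1(T)} \lesssim \|\nabla\vecb{v}\|_{\omega(T)}$; summing over $T\in\mathcal{T}$ with the finite overlap of the patches $\omega(T)$ then gives $\|h_\mathcal{T}^{-1}\Pi^{\mathrm{RT_0}}\widehat{\vecb{v}}\| \lesssim \|\nabla\vecb{v}\|$. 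Collecting the four estimates yields $|||\Pi\vecb{v}|||_\star \leq C_F\|\nabla\vecb{v}\|$.

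The main obstacle is the case $k<d$: there the enrichment carries an inverse-scaled $a_h^D$-contribution, so the conforming interpolant cannot be taken as an arbitrary Fortin operator but must be chosen with the \emph{local approximation} property \eqref{ieq:appro_pict}, and one has to check that the lowest-order Raviart--Thomas interpolation is well-defined and $H^1$-stable on $\vecb{V}$ and that its commuting-diagram property meshes correctly with the definition of $\mathcal{R}$ on the elementwise-zero-mean complement $\widetilde{P}_{k-1}^{\mathrm{disc}}(\mathcal{T})$. For $k\geq d$ both properties are essentially immediate from the already-established properties of $\Pi^{\mathrm{ct}}$ and $\mathcal{R}$ together with a single inverse inequality.
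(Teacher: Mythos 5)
Your proposal is correct and follows essentially the same route as the paper's proof: the same expansion of $|||\Pi\vecb{v}|||_\star$ with cancelling Laplacian cross-terms, the same bound $h_T^{-1}\|\Pi^{\mathrm{RT_0}}\widehat{\vecb{v}}\|_T\lesssim h_T^{-1}\|\widehat{\vecb{v}}\|_T+\|\nabla\widehat{\vecb{v}}\|_T$ via \eqref{ieq:appro_pict} and \eqref{ieq:appro_pirt0} together with an inverse inequality and \eqref{ieq:L2stability}, and the same splitting of $q_h$ along $\widehat{Q}_h\oplus_{L^2}\widehat{Q}_h^\perp$ (respectively $P_0^{\mathrm{disc}}\oplus_{L^2}\widetilde{P}_{k-1}^{\mathrm{disc}}$ for $k<d$) combined with \eqref{eq:Fortin_ct} and the projection identities for $\mathcal{R}$ and $\Pi^{\mathrm{RT_0}}$ to verify the commutation property. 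The only differences are presentational, e.g.\ you spell out the zero-mean compatibility and the inclusion $\widehat{Q}_h^\perp\subseteq\widetilde{P}_{k-1}^{\mathrm{disc}}(\mathcal{T})$ a bit more explicitly than the paper does.
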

  \begin{proof}
  Recall the approximation property of $\Pi^{\mathrm{RT_0}}$ \cite{BBF:book:2013},
  \begin{equation}\label{ieq:appro_pirt0}
   \|\vecb{v}-\Pi^{\mathrm{RT}_0} \vecb{v}\|_{T} \lesssim h_{T}\left\|\nabla\vecb{v}\right\|_{T} \quad\text{for all } T\in \mathcal{T},\vecb{v}\in\vecb{V}.
  \end{equation}
  Note that \eqref{ieq:L2stability} and \eqref{ieq:H1stability_ct} yield
  \begin{align*}
\|\mathrm{div}(\mathcal{R}\widehat{\vecb{v}})\|\leq \|\mathrm{div}(\widehat{\vecb{v}})\|\leq \|\mathrm{div}(\vecb{v})\|+
  \|\mathrm{div}(\Pi^{\mathrm{ct}}\vecb{v})\|\leq (1+C_\mathrm{F}^\mathrm{ct})\|\nabla\vecb{v}\|.
  \end{align*}
 For $k<d$, \eqref{ieq:appro_pict} and \eqref{ieq:appro_pirt0} show
  \begin{align*}
  h_T^{-1}\|\Pi^{\mathrm{RT_0}}\widehat{\vecb{v}}\|_T\lesssim h_T^{-1}\left\|\widehat{\vecb{v}}\right\|_T+\left\|\nabla\widehat{\vecb{v}}\right\|_T\lesssim \left\|\nabla\vecb{v}\right\|_{\omega(T)}+\|\nabla\Pi^{\mathrm{ct}}\vecb{v}\|_T.
  \end{align*}
  Then the stability property
  \begin{align*}
  |||\Pi \vecb{v}|||_\star\lesssim \left(|||\Pi\vecb{v}|||^2+\left\|\mathrm{div}\left(\mathcal{R}\widehat{\vecb{v}}\right)\right\|^2\right)^{1/2}\lesssim \left\|\nabla\vecb{v}\right\|
  \end{align*}
  follows from an inverse inequality, \eqref{ieq:H1stability_ct} and the above two inequalities.

  Let us prove $b(\Pi\vecb{v},q_h)=-(\mathrm{div}(\vecb{v}),q_h)$ for all $q_h\in Q_h$.
  For $k\geq d$ and for any $q_h\in Q_h$,
  due to $\mathrm{div}(\mathcal{R}\widehat{\vecb{v}}) = \pi_{\widehat{Q}_h^\perp}\mathrm{div}(\widehat{\vecb{v}})$ by \eqref{eq:operatorR_commute} it holds
  \begin{align*}
  \left(\mathrm{div}\left(\mathcal{R} \widehat{\vecb{v}}\right),q_h\right)
  =\left(\pi_{\widehat{Q}_h^\perp}\mathrm{div}\left(\widehat{\vecb{v}}\right),\pi_{\widehat{Q}_h^\perp}q_h\right)
  =\left(\mathrm{div}\left(\vecb{v}-\Pi^{\mathrm{ct}}\vecb{v}\right),\pi_{\widehat{Q}_h^\perp}q_h\right),
  \end{align*}
  which implies that
  \begin{align*}
  b\left(\Pi\vecb{v},q_h\right)
  &=-\left(\mathrm{div}\left(\Pi^{\mathrm{ct}}\vecb{v}\right),\pi_{\widehat{Q}_h}q_h\right)
  -\left(\mathrm{div}\left(\Pi^{\mathrm{ct}}\vecb{v}\right),\pi_{\widehat{Q}_h^\perp}q_h\right)\\
  &\hspace{1cm}-\left(\mathrm{div}\left(\vecb{v}-\Pi^{\mathrm{ct}}\vecb{v}\right),\pi_{\widehat{Q}_h^\perp}q_h\right)\\
  (\text{by } \eqref{eq:Fortin_ct})\quad&=-\left(\mathrm{div}\left(\vecb{v}\right),\pi_{\widehat{Q}_h}q_h\right)-
  \left(\mathrm{div}\left(\vecb{v}\right),\pi_{\widehat{Q}_h^\perp}q_h\right)=-\left(\mathrm{div}\left(\vecb{v}\right),q_h\right).
  \end{align*}
  For $k<d$, one obtains similarly
  \begin{align*}
  \left(\mathrm{div}\left(\Pi^{\mathrm{RT}_0} \widehat{\vecb{v}}\right),q_h\right)
  =\left(\mathrm{div}\left(\vecb{v}-\Pi^{\mathrm{ct}}\vecb{v}\right),\pi_{{P}_{0}^{\mathrm{disc}}(\mathcal{T})}q_h\right),
  \end{align*}
  and
  \begin{align*}
  \left(\mathrm{div}\left(\mathcal{R} \widehat{\vecb{v}}\right),q_h\right)
  =\left(\mathrm{div}\left(\vecb{v}-\Pi^{\mathrm{ct}}\vecb{v}\right),\pi_{\widetilde{P}_{k-1}^{\mathrm{disc}}(\mathcal{T})}q_h\right).
  \end{align*}
  Then the $L^2$-orthogonality between ${P}_{0}^{\mathrm{disc}}$ and $\widetilde{P}_{k-1}^{\mathrm{disc}}$ gives
  \begin{align*}
  \left(\mathrm{div}\left(\mathcal{R} \widehat{\vecb{v}}\right),q_h\right)+\left(\mathrm{div}\left(\Pi^{\mathrm{RT}_0} \widehat{\vecb{v}}\right),q_h\right)
  =\left(\mathrm{div}\left(\vecb{v}-\Pi^{\mathrm{ct}}\vecb{v}\right),q_h\right).
  \end{align*}
  It follows from the above equality that
  \begin{align*}
  b\left(\Pi\vecb{v},q_h\right)
  =-\left(\mathrm{div}\left(\vecb{v}\right),q_h\right).
  \end{align*}
  This completes the proof.
  \end{proof}

  The existence of a Fortin interpolator implies that the discrete inf-sup condition holds, which
  is stated in the following theorem.
\begin{theorem}[Inf-sup stability]
  There exists a constant \(\beta > 0\) independent of $h$
  such that
  \begin{align*}
    \sup_{\vecb{v}_h \in \vecb{V}_h} \frac{b(\vecb{v}_h,q_h)}{||| \vecb{v}_h |||_\star} \geq \beta \|q_h\|
    \quad \text{for all } q_h \in Q_h.
  \end{align*}
  \end{theorem}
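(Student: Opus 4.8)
The plan is to run the classical Fortin argument, since all the hard work has already been packed into Lemma~\ref{lem:Fortin_operator}. The one external ingredient is the continuous inf-sup (LBB) condition for the Stokes pair $\vecb{V}\times Q$ on the Lipschitz domain $\Omega$: there is a constant $\beta_0>0$ such that for every $q\in Q$,
\begin{align*}
  \sup_{\vecb{v}\in\vecb{V}\setminus\{\vecb{0}\}} \frac{-(\mathrm{div}(\vecb{v}),q)}{\|\nabla\vecb{v}\|} \geq \beta_0 \|q\|.
\end{align*}
In particular, given $q_h\in Q_h\subset Q$ with $q_h\neq 0$, one can pick $\vecb{v}\in\vecb{V}$ with $-(\mathrm{div}(\vecb{v}),q_h)=\|q_h\|^2$ and $\|\nabla\vecb{v}\|\leq \beta_0^{-1}\|q_h\|$.

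First I would set $\vecb{v}_h:=\Pi\vecb{v}\in\vecb{V}_h$, with $\Pi$ the Fortin operator from Lemma~\ref{lem:Fortin_operator}. Its divergence-preservation property $b(\Pi\vecb{v},q_h)=-(\mathrm{div}(\vecb{v}),q_h)$ for all $q_h\in Q_h$ gives $b(\vecb{v}_h,q_h)=\|q_h\|^2$, and its $h$-uniform stability $|||\Pi\vecb{v}|||_\star\leq C_F\|\nabla\vecb{v}\|$ together with the bound on $\|\nabla\vecb{v}\|$ yields $|||\vecb{v}_h|||_\star\leq C_F\beta_0^{-1}\|q_h\|$. Since $b(\vecb{v}_h,q_h)=\|q_h\|^2>0$, the element $\vecb{v}_h$ is nonzero and $|||\vecb{v}_h|||_\star>0$ by Lemma~\ref{lem:norm}, so the quotient below is well defined; for $q_h=0$ the asserted inequality is trivial.

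Combining these bounds, for $q_h\neq 0$,
\begin{align*}
  \sup_{\vecb{w}_h\in\vecb{V}_h}\frac{b(\vecb{w}_h,q_h)}{|||\vecb{w}_h|||_\star}
  \;\geq\; \frac{b(\vecb{v}_h,q_h)}{|||\vecb{v}_h|||_\star}
  \;\geq\; \frac{\|q_h\|^2}{C_F\beta_0^{-1}\|q_h\|}
  \;=\; \frac{\beta_0}{C_F}\,\|q_h\|,
\end{align*}
so the theorem holds with $\beta:=\beta_0/C_F$. This constant is $h$-independent because $\beta_0$ is the continuous LBB constant and $C_F$ from Lemma~\ref{lem:Fortin_operator} is $h$-independent (it depends only on $C_{\mathrm{F}}^{\mathrm{ct}}$, the shape-regularity constants entering \eqref{ieq:appro_pict}–\eqref{ieq:appro_pirt0} and the inverse inequality, and the $L^2$-stability constant $1$ from \eqref{ieq:L2stability}). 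There is no genuine obstacle remaining; the only care needed is the bookkeeping of constants to certify that none of them secretly depends on $h$, and invoking the continuous inf-sup condition in exactly the normalized form used above so that the ratio simplifies cleanly.
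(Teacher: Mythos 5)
Your proof is correct and follows exactly the route the paper intends: the paper gives no separate proof of this theorem but simply remarks that the existence of the Fortin operator of Lemma~\ref{lem:Fortin_operator} implies the discrete inf-sup condition, and your write-up is precisely that standard deduction (continuous LBB condition plus the two Fortin properties), with the constant $\beta=\beta_0/C_F$ correctly identified as $h$-independent.
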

  
Together with \(a_h\) being coercive and bounded on $\vecb{V}_{h,0}$ (see Section~\ref{sec:apriori_estimates} below), the discrete problem is well-posed and uniquely solvable.
In summary, the Raviart--Thomas enrichment of the Scott--Vogelius pairs leads to inf-sup stable pairs on general shape-regular simplicial grids.

\section{A priori error analysis}
\label{sec:apriori_estimates}

For the subsequent analysis recall that the bilinear form $a_h$ is extended to $\vecb{V} \times \vecb{V}_h$ and the solution $\vecb{u}$ of \eqref{eq:stokes_weak} satisfies
\begin{align*}
  a_h((\vecb{u},\vecb{0}), \vecb{v}_h) = a(\vecb{u}, \vecb{v}^{\mathrm{ct}}_h) - (\Delta \vecb{u}, \vecb{v}_h^{\mathrm{R}})
  \quad \text{for any } \vecb{v}_h \in \vecb{V}_h.
\end{align*}

\begin{lemma}[Consistency]
  \label{lem:consistency}
For the solution \(\vecb{u} \in \vecb{V}_0\) of \eqref{eq:stokes_weak} with \(\Delta \vecb{u} \in \vecb{L}^2(\Omega)\), it holds
\begin{align*}
  \nu a_h((\vecb{u},\vecb{0}), \vecb{v}_h) = (\vecb{f},\vecb{v}_h) \quad \text{for all } \vecb{v}_h \in \vecb{V}_{h,0}.
\end{align*}
Moreover, if \(\vecb{u} \in \vecb{V}_h^{\mathrm{ct}}\) then it is \(\vecb{u}_h = (\vecb{u},\vecb{0})\).
\end{lemma}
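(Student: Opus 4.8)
The plan is to insert the exact solution pair $(\vecb{u},\vecb{0})$ into the discrete form and compare with the strong form of \eqref{eq:stokes} tested against the sum $\vecb{v}_h^{\mathrm{ct}}+\vecb{v}_h^{\mathrm{R}}$. The first observation I would record is a regularity bootstrap for the pressure: since $\vecb{f}\in\vecb{L}^2(\Omega)$ and $\Delta\vecb{u}\in\vecb{L}^2(\Omega)$ by hypothesis, the momentum equation reads $\nabla p=\vecb{f}+\nu\Delta\vecb{u}\in\vecb{L}^2(\Omega)$, and together with $p\in L_0^2(\Omega)$ this gives $p\in H^1(\Omega)$. This is what will allow $\nabla p$ to be integrated by parts against a field that is merely in $\vecb{H}(\mathrm{div},\Omega)$.

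Next I would evaluate the left-hand side. Using the extended formula for $a_h((\vecb{u},\vecb{0}),\vecb{v}_h)$ recalled just before the lemma (the third term and the $a_h^D$-term drop out because the enrichment component of $(\vecb{u},\vecb{0})$ is zero), one has $\nu a_h((\vecb{u},\vecb{0}),\vecb{v}_h)=\nu a(\vecb{u},\vecb{v}_h^{\mathrm{ct}})-\nu(\Delta\vecb{u},\vecb{v}_h^{\mathrm{R}})$. Since $\vecb{v}_h^{\mathrm{ct}}\in\vecb{H}_0^1(\Omega)$ and $\Delta\vecb{u}\in\vecb{L}^2(\Omega)$, integration by parts yields $a(\vecb{u},\vecb{v}_h^{\mathrm{ct}})=-(\Delta\vecb{u},\vecb{v}_h^{\mathrm{ct}})$, hence $\nu a_h((\vecb{u},\vecb{0}),\vecb{v}_h)=-\nu(\Delta\vecb{u},\vecb{v}_h^{\mathrm{ct}}+\vecb{v}_h^{\mathrm{R}})$. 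For the right-hand side I would test the momentum equation with $\vecb{w}:=\vecb{v}_h^{\mathrm{ct}}+\vecb{v}_h^{\mathrm{R}}$, obtaining $(\vecb{f},\vecb{v}_h)=(\vecb{f},\vecb{w})=-\nu(\Delta\vecb{u},\vecb{w})+(\nabla p,\vecb{w})$. Because $\vecb{v}_h^{\mathrm{ct}}$ has vanishing trace and $\vecb{v}_h^{\mathrm{R}}\in\vecb{V}_h^\mathrm{R}\subset\vecb{H}_0(\mathrm{div},\Omega)$, the sum $\vecb{w}$ lies in $\vecb{H}_0(\mathrm{div},\Omega)$, so integration by parts (legitimate by $p\in H^1(\Omega)$) gives $(\nabla p,\vecb{w})=-(p,\mathrm{div}\,\vecb{w})$, which vanishes since $\vecb{v}_h\in\vecb{V}_{h,0}$ means exactly $\mathrm{div}(\vecb{v}_h^{\mathrm{ct}}+\vecb{v}_h^{\mathrm{R}})=0$. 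Comparing the two expressions proves the consistency identity.

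For the second assertion, I would note that if $\vecb{u}\in\vecb{V}_h^{\mathrm{ct}}$ then $(\vecb{u},\vecb{0})\in\vecb{V}_h$, and since $\mathrm{div}\,\vecb{u}=0$ also $(\vecb{u},\vecb{0})\in\vecb{V}_{h,0}$; by the first part it then satisfies the reduced problem \eqref{eq:reduced_scheme}. Because $a_h$ is coercive on $\vecb{V}_{h,0}$ — equivalently $|||\bullet|||$ is a norm there, Lemma~\ref{lem:norm_uniqueness} — the solution of \eqref{eq:reduced_scheme} is unique, so $\vecb{u}_h=(\vecb{u},\vecb{0})$.

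I do not expect a genuinely hard step here; the points needing care are the regularity bootstrap $p\in H^1(\Omega)$ (required to integrate $\nabla p$ by parts against an $\vecb{H}(\mathrm{div})$-field rather than an $\vecb{H}^1$-field), the verification that $\vecb{v}_h^{\mathrm{ct}}+\vecb{v}_h^{\mathrm{R}}$ is globally $\vecb{H}(\mathrm{div})$-conforming with vanishing normal trace, and the bookkeeping of which terms of $a_h$ survive when the enrichment component is zero; the uniqueness used in the last assertion rests on coercivity, which is established in Section~\ref{sec:apriori_estimates}.
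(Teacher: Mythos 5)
Your proposal is correct and follows essentially the same route as the paper: expand $a_h((\vecb{u},\vecb{0}),\vecb{v}_h)$ using the extension formula, integrate by parts to get $-\nu(\Delta\vecb{u},\vecb{v}_h^{\mathrm{ct}}+\vecb{v}_h^{\mathrm{R}})$, use the momentum equation together with $\mathrm{div}(\vecb{v}_h^{\mathrm{ct}}+\vecb{v}_h^{\mathrm{R}})=0$ to eliminate the pressure, and deduce the second claim from unique solvability. The paper compresses this into a single chain of equalities, so your added details (the bootstrap $p\in H^1(\Omega)$ and the check that the test field lies in $\vecb{H}_0(\mathrm{div},\Omega)$) are just a more explicit justification of the same steps.
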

\begin{proof}
  The first statement follows from
   \begin{align*}
   \nu a_h((\vecb{u},\vecb{0}), \vecb{v}_h)
   = \nu (\nabla \vecb{u}, \nabla \vecb{v}_h^{\mathrm{ct}})
     - \nu (\Delta \vecb{u}, \vecb{v}_h^{\mathrm{R}})
   =  - \nu (\Delta \vecb{u}, \vecb{v}_h^{\mathrm{ct}} + \vecb{v}_h^{\mathrm{R}})
   = (\vecb{f}, \vecb{v}_h^{\mathrm{ct}} + \vecb{v}_h^{\mathrm{R}}).
   \end{align*}
   If \(\vecb{u} \in \vecb{V}_h^{\mathrm{ct}}\), this also shows that \(\vecb{u}_h = (\vecb{u},\vecb{0})\) is
   a discrete solution and due to unique solvability also the only one.
\end{proof}
\begin{lemma}\label{lem:bound_vhr}
    The following inequality is valid for $\vecb{v}_h \in \widetilde{\vecb{RT}}_k^{\mathrm{int}}(\mathcal{T})$:
    \begin{equation}\label{ieq:L2_L2div}
    \left\|\vecb{v}_h\right\|_{T}\leq C h_{T} \left\|\mathrm{div}\left(\vecb{v}_h\right)\right\|_{T}\quad \text{for all } T\in \mathcal{T}.
    \end{equation}
  \end{lemma}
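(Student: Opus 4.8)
The plan is to reduce the estimate to a fixed reference simplex via Piola's transformation and to exploit the fact, recorded in Subsection~\ref{sec:notation}, that the divergence operator is injective on the reference space modelling $\widetilde{\vecb{RT}}_k^{\mathrm{int}}(\mathcal{T})$. Note first that the claimed inequality is purely local: $\vecb{v}_h \in \widetilde{\vecb{RT}}_k^{\mathrm{int}}(\mathcal{T})$ means $\vecb{v}_h|_T \in \widetilde{\vecb{RT}}_k^{\mathrm{int}}(T)$ for every $T$, so it suffices to bound $\|\vecb{v}_h\|_T$ by $h_T\|\mathrm{div}(\vecb{v}_h)\|_T$ on a single element.

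I would start on the reference simplex $\widehat T$. By the design requirement stated in Subsection~\ref{sec:notation}, every local space $\widetilde{\vecb{RT}}_k^{\mathrm{int}}(T)$ is the Piola image of one and the same reference space $\widetilde{\vecb{RT}}_k^{\mathrm{int}}(\widehat T)$, and since Piola's transformation preserves (in a scaled sense) the normal trace this identification is consistent with the boundary condition $\vecb v\cdot\vecb n_T|_{\partial T}=0$. On $\widehat T$ the map $\widehat{\vecb v}\mapsto\|\mathrm{div}(\widehat{\vecb v})\|_{\widehat T}$ is a norm on the finite-dimensional space $\widetilde{\vecb{RT}}_k^{\mathrm{int}}(\widehat T)$, because its kernel consists of the divergence-free functions in that space, which is $\{\vecb 0\}$ by definition of $\widetilde{\vecb{RT}}_k^{\mathrm{int}}$. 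Equivalence of norms on a finite-dimensional space then yields a constant $\widehat C=\widehat C(\widehat T,k)$ with
\begin{align*}
  \|\widehat{\vecb v}\|_{\widehat T}\le \widehat C\,\|\mathrm{div}(\widehat{\vecb v})\|_{\widehat T}\qquad\text{for all }\widehat{\vecb v}\in\widetilde{\vecb{RT}}_k^{\mathrm{int}}(\widehat T).
\end{align*}

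Next I would transport this inequality to an arbitrary $T\in\mathcal T$ through the affine map $F_T(\widehat{\vecb x})=B_T\widehat{\vecb x}+\vecb b_T$ with $F_T(\widehat T)=T$ and the associated contravariant Piola transform $\vecb v=\det(B_T)^{-1}B_T\,\widehat{\vecb v}\circ F_T^{-1}$, under which $\widetilde{\vecb{RT}}_k^{\mathrm{int}}(\widehat T)$ maps onto $\widetilde{\vecb{RT}}_k^{\mathrm{int}}(T)$. Using the identity $\mathrm{div}(\vecb v)\circ F_T=\det(B_T)^{-1}\,\widehat{\mathrm{div}}(\widehat{\vecb v})$ together with the change of variables, one gets the standard scaling relations
\begin{align*}
  \|\vecb v\|_T\le |\det B_T|^{-1/2}\,\|B_T\|\,\|\widehat{\vecb v}\|_{\widehat T},\qquad
  \|\mathrm{div}(\vecb v)\|_T= |\det B_T|^{-1/2}\,\|\widehat{\mathrm{div}}(\widehat{\vecb v})\|_{\widehat T}.
\end{align*}
Chaining these with the reference inequality gives $\|\vecb v\|_T\le \widehat C\,\|B_T\|\,\|\mathrm{div}(\vecb v)\|_T$, and finally shape-regularity of $\mathcal T$ provides $\|B_T\|\lesssim h_T$, which establishes \eqref{ieq:L2_L2div} with a constant $C$ depending only on $k$, $d$ and the shape-regularity of $\mathcal T$. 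The argument is otherwise routine bookkeeping of the Jacobian factors; the only point that genuinely matters is the first step, i.e.\ that $\widetilde{\vecb{RT}}_k^{\mathrm{int}}$ is modelled on a \emph{single} fixed reference space, so that $\widehat C$ is one constant rather than a family that could degenerate — this is exactly why that structural assumption was imposed in Subsection~\ref{sec:notation}.
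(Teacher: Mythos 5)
Your proof is correct and follows essentially the same route as the paper's: both reduce to a single fixed reference space via Piola's transformation, use injectivity of the divergence together with finite-dimensionality to get the norm equivalence $\|\widehat{\vecb v}\|_{\widehat T}\le \widehat C\|\mathrm{div}(\widehat{\vecb v})\|_{\widehat T}$, and then invoke the standard Piola scaling relations (which the paper simply cites from Boffi--Brezzi--Fortin, Eqs.~2.1.75 and 2.1.78, and you write out explicitly). No gaps.
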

  \begin{proof}
  Recall that we have required in Subsection~\ref{sec:notation} that all local spaces $\widetilde{\vecb{RT}}_k^{\mathrm{int}}(T)$ are obtained by Piola's transformation from a same reference space, which is denoted by $\widetilde{\vecb{RT}}_k^{\mathrm{int}}(T^\mathrm{ref})$ in what follows.
  Due to the injectivity of the divergence operator, both $\|\bullet\|_{T^\mathrm{ref}}$ and $\|\mathrm{div}\bullet\|_{T^\mathrm{ref}}$ are norms on $\widetilde{\vecb{RT}}_k^{\mathrm{int}}(T^\mathrm{ref})$. Since the dimension of $\widetilde{\vecb{RT}}_k^{\mathrm{int}}(T^\mathrm{ref})$ is finite, the two norms are equivalent, i.e., there must be two constants
  $C_\star$ and $C^\star$ which satisfy
  \begin{equation}\label{ieq:ref_fun}
  C_\star\left\|\mathrm{div}\left(\widehat{\vecb{v}}_h\right)\right\|_{T^\mathrm{ref}}\leq\left\|\widehat{\vecb{v}}_h\right\|_{T^\mathrm{ref}}\leq C^\star \left\|\mathrm{div}\left(\widehat{\vecb{v}}_h\right)\right\|_{T^\mathrm{ref}}\quad\text{for all }\widehat{\vecb{v}}_h\in \widetilde{\vecb{RT}}_k^{\mathrm{int}}(T^\mathrm{ref}).
  \end{equation}
  The constants $C_\star$ and $C^\star$ are independent of $h$ since the above inequality is related to reference space only.

  For any $\vecb{v}_h\in \widetilde{\vecb{RT}}_k^{\mathrm{int}}(T)$, there exists a corresponding reference function $\widehat{\vecb{v}}_h\in\widetilde{\vecb{RT}}_k^{\mathrm{int}}(T^\mathrm{ref})$. Then a combination of \eqref{ieq:ref_fun} and \cite[Eqs. 2.1.75 \& 2.1.78]{BBF:book:2013} gives \eqref{ieq:L2_L2div}.
  \end{proof}
\begin{lemma}[Coercivity and boundedness of $a_h$]\label{lem:coercivity}
The bilinear form $a_h$ satisfies
\begin{align*}
a_h(\vecb{v}_h,\vecb{v}_h)=|||\vecb{v}_h|||^2\gtrsim |||\vecb{v}_h|||_\star^2 \quad \text{for all } \vecb{v}_h\in \vecb{V}_{h,0},
\end{align*}
and
\begin{align*}
a_h(\vecb{u},\vecb{v})\lesssim |||\vecb{u}|||_\star|||\vecb{v}|||_\star \quad \text{for all } \vecb{u}, \vecb{v}\in \vecb{V} \times \vecb{V}_h^\mathrm{R}.
\end{align*}
\end{lemma}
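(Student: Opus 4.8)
\emph{Proof plan.} The plan is to prove the two assertions separately, exploiting the skew-symmetric coupling in $a_h$. The key preliminary observation is that on the diagonal the two mixed piecewise-Laplacian terms of $a_h$ cancel, so that $|||\vecb{v}|||^2 = a_h(\vecb{v},\vecb{v}) = \|\nabla\vecb{v}^{\mathrm{ct}}\|^2 + a_h^D(\vecb{v}^{\mathrm{RT}_0},\vecb{v}^{\mathrm{RT}_0})$ holds for every $\vecb{v}=(\vecb{v}^{\mathrm{ct}},\vecb{v}^{\mathrm{RT}_0}+\widetilde{\vecb{v}}^\mathrm{R})\in\vecb{V}\times\vecb{V}_h^\mathrm{R}$. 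In particular $|||\bullet|||\le|||\bullet|||_\star$ is trivial, and for coercivity it remains to dominate the two extra terms of $|||\bullet|||_\star^2$ by $|||\bullet|||^2$ on $\vecb{V}_{h,0}$. For the first, $\|h_\mathcal{T}\Delta_\text{pw}\vecb{v}_h^{\mathrm{ct}}\|^2 \lesssim \|\nabla\vecb{v}_h^{\mathrm{ct}}\|^2$ follows from a standard scaling argument on the reference simplex (both sides vanish on constants, so this reduces to equivalence of seminorms on a finite-dimensional quotient space). For the second, $\vecb{v}_h\in\vecb{V}_{h,0}$ forces $\mathrm{div}(\widetilde{\vecb{v}}_h^\mathrm{R}) = -\mathrm{div}(\vecb{v}_h^{\mathrm{ct}}) - \mathrm{div}(\vecb{v}_h^{\mathrm{RT}_0})$, hence $\|\mathrm{div}(\widetilde{\vecb{v}}_h^\mathrm{R})\| \lesssim \|\nabla\vecb{v}_h^{\mathrm{ct}}\| + \|\mathrm{div}(\vecb{v}_h^{\mathrm{RT}_0})\|$, and the last term is handled by the inverse estimate $\|\mathrm{div}(\vecb{w})\|_T \lesssim h_T^{-1}\|\vecb{w}\|_T$ on $\vecb{RT}_0(T)$ (again by Piola scaling) together with the norm equivalence \eqref{eqn:ahD_equivalence}, giving $\|\mathrm{div}(\vecb{v}_h^{\mathrm{RT}_0})\|^2 \lesssim \|h_\mathcal{T}^{-1}\vecb{v}_h^{\mathrm{RT}_0}\|^2 \approx \alpha^{-1}a_h^D(\vecb{v}_h^{\mathrm{RT}_0},\vecb{v}_h^{\mathrm{RT}_0})$; since $\alpha$ is a fixed positive constant this yields $|||\vecb{v}_h|||_\star^2 \lesssim |||\vecb{v}_h|||^2$ with an $h$-independent constant.

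For the boundedness estimate I would split $a_h(\vecb{u},\vecb{v})$ into its four summands and bound each by Cauchy--Schwarz. The terms $(\nabla\vecb{u}^{\mathrm{ct}},\nabla\vecb{v}^{\mathrm{ct}})$ and $a_h^D(\vecb{u}^{\mathrm{RT}_0},\vecb{v}^{\mathrm{RT}_0})$ (the latter using the Cauchy--Schwarz inequality for the symmetric positive semidefinite form $a_h^D$) are immediately bounded by $|||\vecb{u}|||\,|||\vecb{v}||| \le |||\vecb{u}|||_\star|||\vecb{v}|||_\star$, since $\|\nabla\vecb{u}^{\mathrm{ct}}\|^2$ and $a_h^D(\vecb{u}^{\mathrm{RT}_0},\vecb{u}^{\mathrm{RT}_0})$ are summands of $|||\vecb{u}|||^2$. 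For the coupling term $(\Delta_\text{pw}\vecb{u}^{\mathrm{ct}},\vecb{v}^{\mathrm{R}})$ I would insert the weights $h_\mathcal{T}$, $h_\mathcal{T}^{-1}$ and split $\vecb{v}^{\mathrm{R}} = \vecb{v}^{\mathrm{RT}_0} + \widetilde{\vecb{v}}^\mathrm{R}$: on the $\vecb{RT}_0$ part, Cauchy--Schwarz and \eqref{eqn:ahD_equivalence} give $|(\Delta_\text{pw}\vecb{u}^{\mathrm{ct}},\vecb{v}^{\mathrm{RT}_0})| \le \|h_\mathcal{T}\Delta_\text{pw}\vecb{u}^{\mathrm{ct}}\|\,\|h_\mathcal{T}^{-1}\vecb{v}^{\mathrm{RT}_0}\| \lesssim |||\vecb{u}|||_\star|||\vecb{v}|||_\star$; on the $\widetilde{\vecb{RT}}$ part, $|(\Delta_\text{pw}\vecb{u}^{\mathrm{ct}},\widetilde{\vecb{v}}^\mathrm{R})| \le \|h_\mathcal{T}\Delta_\text{pw}\vecb{u}^{\mathrm{ct}}\|\,\|h_\mathcal{T}^{-1}\widetilde{\vecb{v}}^\mathrm{R}\|$, and Lemma~\ref{lem:bound_vhr} (applied with $k-1$ in place of $k$) converts $\|h_\mathcal{T}^{-1}\widetilde{\vecb{v}}^\mathrm{R}\| \lesssim \|\mathrm{div}(\widetilde{\vecb{v}}^\mathrm{R})\| \le |||\vecb{v}|||_\star$. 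The term $(\Delta_\text{pw}\vecb{v}^{\mathrm{ct}},\vecb{u}^{\mathrm{R}})$ is identical with $\vecb{u}$ and $\vecb{v}$ exchanged; collecting the four estimates and applying a discrete Cauchy--Schwarz inequality to the sum gives $a_h(\vecb{u},\vecb{v}) \lesssim |||\vecb{u}|||_\star|||\vecb{v}|||_\star$.

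The step I expect to be the main obstacle is precisely this coupling of the piecewise Laplacian against the divergence-carrying component $\widetilde{\vecb{v}}^\mathrm{R}\in\widetilde{\vecb{RT}}_{k-1}^{\mathrm{int}}(\mathcal{T})$: a bare Cauchy--Schwarz only produces the factor $\|h_\mathcal{T}^{-1}\widetilde{\vecb{v}}^\mathrm{R}\|$, which is \emph{not} controlled by the energy norm $|||\bullet|||$, so one is forced to pass through the scaled inverse estimate of Lemma~\ref{lem:bound_vhr} and trade it for $\|\mathrm{div}(\widetilde{\vecb{v}}^\mathrm{R})\|$ --- which is exactly why that quantity was built into $|||\bullet|||_\star$ in the first place. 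All remaining ingredients are routine scaling and inverse inequalities, with the caveat that the hidden constants may depend on $\alpha$, the shape-regularity, $k$ and $d$, but not on $h$.
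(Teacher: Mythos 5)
Your proposal is correct and follows essentially the same route as the paper: the boundedness is obtained exactly as in the paper's proof by splitting $a_h$ into its four summands, applying Cauchy--Schwarz with the weights $h_\mathcal{T}$, $h_\mathcal{T}^{-1}$, and controlling $\|h_\mathcal{T}^{-1}\vecb{v}^{\mathrm{R}}\|$ via Lemma~\ref{lem:bound_vhr} and \eqref{eqn:ahD_equivalence}. The only difference is one of detail: the paper dismisses the coercivity as ``obvious'' (relying on the remark after Lemma~\ref{lem:norm} that the two norms are equivalent on $\vecb{V}_{h,0}$ by an inverse inequality and the divergence constraint), whereas you spell out that argument explicitly --- and correctly, including the observation that Lemma~\ref{lem:bound_vhr} must be invoked with $k-1$ in place of $k$.
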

\begin{proof}
The coercivity of $a_h$ is obvious. Let us prove the boundedness. It follows from the Cauchy--Schwarz inequality and Lemma~\ref{lem:bound_vhr} that
\begin{align*}
a_h(\vecb{u},\vecb{v})&=(\nabla \vecb{u}^{\mathrm{ct}},\nabla \vecb{v}^{\mathrm{ct}})-(\Delta_\text{pw} \vecb{u}^{\mathrm{ct}},\vecb{v}^{\mathrm{R}})
+(\Delta_\text{pw} \vecb{v}^{\mathrm{ct}},\vecb{u}^{\mathrm{R}})+a_h^D(\vecb{u}^{\mathrm{RT}_0},\vecb{v}^{\mathrm{RT}_0})\\
& \leq \|\nabla \vecb{u}^{\mathrm{ct}}\|\|\nabla \vecb{v}^{\mathrm{ct}}\|+\|h_{\mathcal{T}}\Delta_\text{pw} \vecb{u}^{\mathrm{ct}}\|\|h_{\mathcal{T}}^{-1}\vecb{v}^{\mathrm{R}}\|
+\|h_{\mathcal{T}}\Delta_\text{pw} \vecb{v}^{\mathrm{ct}}\|\|h_{\mathcal{T}}^{-1}\vecb{u}^{\mathrm{R}}\|\\
& \hspace{1cm} +a_h^D(\vecb{u}^{\mathrm{RT}_0},\vecb{u}^{\mathrm{RT}_0})^{1/2}a_h^D(\vecb{v}^{\mathrm{RT}_0},\vecb{v}^{\mathrm{RT}_0})^{1/2}\\
&\lesssim |||\vecb{u}|||_\star|||\vecb{v}|||_\star.
\end{align*}
This concludes the proof.
\end{proof}

\begin{theorem}[Pressure-robust a priori velocity error estimate]
  \label{thm:velocity_estimate}
Denote by \(\vecb{u} \in \vecb{V}_0\) and \(\vecb{u}_h \in \vecb{V}_{h,0}\) the velocity solutions of \eqref{eq:stokes_weak} and \eqref{eq:fullscheme}, respectively. Assume \(\vecb{u} \in \vecb{H}^{k+1}(\Omega)\). It holds
\begin{align*}
  ||| (\vecb{u},\vecb{0}) - \vecb{u}_h |||_\star & \lesssim \inf_{\vecb{v}_h \in \vecb{V}_{h,0}} ||| (\vecb{u},\vecb{0}) - \vecb{v}_h |||_\star
  \leq (1 + C_F) \inf_{\vecb{v}_h^\mathrm{ct} \in \vecb{V}_{h}^\mathrm{ct}} \|\nabla( \vecb{u} - \vecb{v}_h^\mathrm{ct})\| \lesssim h^k \lvert \vecb{u}\rvert_{\vecb{H}^{k+1}(\Omega)}.
\end{align*}
Here, \(C_F\) denotes the stability constant of the Fortin interpolator $\Pi$.
The above inequality implies
\begin{align*}
  || \nabla( \vecb{u} - \vecb{u}^{\mathrm{ct}}_h) ||^2
  + \| h_\mathcal{T}^{-1} \vecb{u}_h^{\mathrm{R}} \|^2
  \lesssim ||| (\vecb{u},\vecb{0}) - \vecb{u}_h |||^2_\star \leq h^{2k} \lvert \vecb{u}\rvert_{\vecb{H}^{k+1}(\Omega)}^2.
\end{align*}

\end{theorem}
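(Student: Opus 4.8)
The plan is a Strang/Céa-type argument carried out in the mesh-dependent norm $|||\cdot|||_\star$: first establish quasi-optimality of $\vecb{u}_h$ over $\vecb{V}_{h,0}$, then build an explicit near-best competitor inside $\vecb{V}_{h,0}$ via the Fortin operator of Lemma~\ref{lem:Fortin_operator}, and finally close with standard polynomial interpolation using $\vecb{u}\in\vecb{H}^{k+1}(\Omega)$. For the quasi-optimality, fix $\vecb{v}_h\in\vecb{V}_{h,0}$, set $\vecb{e}_h:=\vecb{u}_h-\vecb{v}_h\in\vecb{V}_{h,0}$, and use that Lemma~\ref{lem:consistency} together with \eqref{eq:reduced_scheme} gives $a_h(\vecb{u}_h,\vecb{w}_h)=a_h((\vecb{u},\vecb{0}),\vecb{w}_h)$ for all $\vecb{w}_h\in\vecb{V}_{h,0}$ (the factor $\nu$ cancels). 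Then coercivity and boundedness from Lemma~\ref{lem:coercivity} yield $|||\vecb{e}_h|||^2=a_h((\vecb{u},\vecb{0})-\vecb{v}_h,\vecb{e}_h)\lesssim|||(\vecb{u},\vecb{0})-\vecb{v}_h|||_\star\,|||\vecb{e}_h|||_\star$; since $|||\cdot|||$ and $|||\cdot|||_\star$ are equivalent on $\vecb{V}_{h,0}$ (inverse inequality, plus $\mathrm{div}(\widetilde{\vecb{w}}_h^{\mathrm{R}})=-\mathrm{div}(\vecb{w}_h^{\mathrm{ct}})-\mathrm{div}(\vecb{w}_h^{\mathrm{RT}_0})$ there), a triangle inequality gives $|||(\vecb{u},\vecb{0})-\vecb{u}_h|||_\star\lesssim|||(\vecb{u},\vecb{0})-\vecb{v}_h|||_\star$, and taking the infimum proves the first inequality.

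Next I would construct the competitor. For any $\vecb{v}_h^{\mathrm{ct}}\in\vecb{V}_h^{\mathrm{ct}}$ put $\vecb{w}:=\vecb{u}-\vecb{v}_h^{\mathrm{ct}}\in\vecb{V}$ and $\vecb{v}_h:=(\vecb{v}_h^{\mathrm{ct}},\vecb{0})+\Pi\vecb{w}$. The Fortin property of $\Pi$ gives $b(\vecb{v}_h,q_h)=-(\mathrm{div}(\vecb{v}_h^{\mathrm{ct}}),q_h)-(\mathrm{div}(\vecb{w}),q_h)=-(\mathrm{div}(\vecb{u}),q_h)=0$ for all $q_h\in Q_h$, so $\vecb{v}_h\in\vecb{V}_{h,0}$ and $(\vecb{u},\vecb{0})-\vecb{v}_h=(\vecb{w},\vecb{0})-\Pi\vecb{w}$. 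Because the two skew cross terms of $a_h$ cancel on the purely $\vecb{H}^1$-conforming argument $(\vecb{w},\vecb{0})$, one has $|||(\vecb{w},\vecb{0})|||_\star^2=\|\nabla\vecb{w}\|^2+\|h_\mathcal{T}\Delta_\text{pw}\vecb{w}\|^2$; combined with $|||\Pi\vecb{w}|||_\star\le C_F\|\nabla\vecb{w}\|$ from Lemma~\ref{lem:Fortin_operator} and a triangle inequality this gives $|||(\vecb{u},\vecb{0})-\vecb{v}_h|||_\star\le(1+C_F)\|\nabla\vecb{w}\|+\|h_\mathcal{T}\Delta_\text{pw}\vecb{w}\|$. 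For $k<d$ the additional $\vecb{RT}_0$ block of $\Pi\vecb{w}$, together with the divergence bound of \eqref{ieq:L2stability}, is already built into the estimate $|||\Pi\vecb{w}|||_\star\le C_F\|\nabla\vecb{w}\|$, so no extra work is needed there.

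To turn this into the convergence rate I would specialize $\vecb{v}_h^{\mathrm{ct}}$ to a standard interpolant of the smooth solution into $\vecb{V}_h^{\mathrm{ct}}$ (e.g.\ Scott--Zhang or nodal): then $\|\nabla(\vecb{u}-\vecb{v}_h^{\mathrm{ct}})\|\lesssim h^k|\vecb{u}|_{\vecb{H}^{k+1}(\Omega)}$, and since the piecewise second derivatives of the interpolation error decay one order lower, $\|h_\mathcal{T}\Delta_\text{pw}(\vecb{u}-\vecb{v}_h^{\mathrm{ct}})\|\lesssim h^k|\vecb{u}|_{\vecb{H}^{k+1}(\Omega)}$ as well; together with the previous step this gives $\inf_{\vecb{v}_h\in\vecb{V}_{h,0}}|||(\vecb{u},\vecb{0})-\vecb{v}_h|||_\star\lesssim h^k|\vecb{u}|_{\vecb{H}^{k+1}(\Omega)}$, which with the quasi-optimality completes the displayed chain. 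For the elementwise corollary, note that for $\vecb{v}_h=(\vecb{u},\vecb{0})-\vecb{u}_h$ the cross terms cancel and the R-part of $(\vecb{u},\vecb{0})$ vanishes, so $|||(\vecb{u},\vecb{0})-\vecb{u}_h|||^2=\|\nabla(\vecb{u}-\vecb{u}_h^{\mathrm{ct}})\|^2+a_h^D(\vecb{u}_h^{\mathrm{RT}_0},\vecb{u}_h^{\mathrm{RT}_0})$; then $a_h^D(\vecb{u}_h^{\mathrm{RT}_0},\vecb{u}_h^{\mathrm{RT}_0})\gtrsim\|h_\mathcal{T}^{-1}\vecb{u}_h^{\mathrm{RT}_0}\|^2$ by \eqref{eqn:ahD_equivalence} (with $\alpha$ a fixed positive constant) and $\|\mathrm{div}(\widetilde{\vecb{u}}_h^{\mathrm{R}})\|^2\gtrsim\|h_\mathcal{T}^{-1}\widetilde{\vecb{u}}_h^{\mathrm{R}}\|^2$ by Lemma~\ref{lem:bound_vhr}; adding these and using \eqref{eq:norm} gives $\|\nabla(\vecb{u}-\vecb{u}_h^{\mathrm{ct}})\|^2+\|h_\mathcal{T}^{-1}\vecb{u}_h^{\mathrm{R}}\|^2\lesssim|||(\vecb{u},\vecb{0})-\vecb{u}_h|||_\star^2$, and the rate just established finishes the proof.

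The main obstacle I anticipate is the $\|h_\mathcal{T}\Delta_\text{pw}\vecb{v}^{\mathrm{ct}}\|$-contribution hard-wired into $|||\cdot|||_\star$: it blocks a fully clean Céa estimate against $\inf\|\nabla(\vecb{u}-\vecb{v}_h^{\mathrm{ct}})\|$ for an arbitrary best $\vecb{H}^1$ approximation, and thus forces the commitment to an actual interpolant of $\vecb{u}$ in the rate step and the use of the $H^2$-type interpolation estimate — which is exactly where the regularity $\vecb{u}\in\vecb{H}^{k+1}(\Omega)$ is consumed to keep the order at $h^k$ rather than losing a power of $h$. A secondary bookkeeping issue is tracking the $\vecb{RT}_0$ block (present only for $k<d$) simultaneously through the stabilization $a_h^D$ and through $\Pi^{\mathrm{RT}_0}$, but this is already subsumed in the Fortin estimate of Lemma~\ref{lem:Fortin_operator}.
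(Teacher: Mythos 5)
Your proposal is correct and follows essentially the same route as the paper: Galerkin orthogonality from Lemma~\ref{lem:consistency}, coercivity/boundedness from Lemma~\ref{lem:coercivity} for quasi-optimality over $\vecb{V}_{h,0}$, and the Fortin operator $\Pi$ applied to $\vecb{u}-\vecb{v}_h^{\mathrm{ct}}$ to build the competitor. The one substantive difference is that you explicitly retain the $\|h_\mathcal{T}\Delta_{\mathrm{pw}}(\vecb{u}-\vecb{v}_h^{\mathrm{ct}})\|$ contribution of $|||(\vecb{u}-\vecb{v}_h^{\mathrm{ct}},\vecb{0})|||_\star$ and absorb it only at the final interpolation step, whereas the paper's proof asserts $|||(\vecb{u},\vecb{0})-\vecb{w}_h|||_\star\leq\|\nabla(\vecb{u}-\vecb{w}_h^{\mathrm{ct}})\|$ for \emph{arbitrary} $\vecb{w}_h^{\mathrm{ct}}$, which by the definition \eqref{eq:norm} silently discards that term (it cannot be controlled by $\|\nabla(\vecb{u}-\vecb{w}_h^{\mathrm{ct}})\|$ since $\vecb{u}$ is not discrete); your variant sacrifices the clean intermediate bound $(1+C_F)\inf_{\vecb{v}_h^{\mathrm{ct}}}\|\nabla(\vecb{u}-\vecb{v}_h^{\mathrm{ct}})\|$ but is the rigorous reading, and it still delivers the $h^k\lvert\vecb{u}\rvert_{\vecb{H}^{k+1}(\Omega)}$ rate. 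You also spell out the final display via \eqref{eqn:ahD_equivalence} and Lemma~\ref{lem:bound_vhr}, which the paper leaves as ``implied''; that derivation is correct as written.
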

\begin{proof}
 Lemma~\ref{lem:consistency} yields that
  \begin{align*}
    a_h((\vecb{u},\vecb{0}) - \vecb{u}_h, \vecb{v}_h) = 0 \quad \text{for all } \vecb{v}_h \in \vecb{V}_{h,0}.
  \end{align*}
  Therefore, it follows from Lemma~\ref{lem:coercivity} that
  \begin{align*}
  |||\vecb{u}_h - \vecb{v}_h|||_\star^2&\lesssim a_h(\vecb{u}_h - \vecb{v}_h, \vecb{u}_h - \vecb{v}_h)=a_h((\vecb{u},\vecb{0}) - \vecb{v}_h, \vecb{u}_h - \vecb{v}_h)\\
  &\lesssim |||(\vecb{u},\vecb{0}) - \vecb{v}_h|||_\star|||\vecb{u}_h - \vecb{v}_h|||_\star \quad \text{for any } \vecb{v}_h \in \vecb{V}_{h,0}.
  \end{align*}
  This and a triangle inequality shows
\begin{align*}
  || \nabla( \vecb{u} - \vecb{u}^{\mathrm{ct}}_h) ||
  \leq ||| (\vecb{u},\vecb{0}) - \vecb{u}_h |||_\star
  \leq ||| (\vecb{u},\vecb{0}) - \vecb{v}_h |||_\star+
   ||| \vecb{u}_h-\vecb{v}_h |||_\star
   \lesssim ||| (\vecb{u},\vecb{0}) - \vecb{v}_h |||_\star.
\end{align*}
Since $\vecb{v}_h$ is arbitrary, one gets
\begin{align*}
||| (\vecb{u},\vecb{0}) - \vecb{u}_h |||_\star\lesssim\inf_{\vecb{v}_h \in \vecb{V}_{h,0}} ||| (\vecb{u},\vecb{0}) - \vecb{v}_h |||_\star.
\end{align*}
Consider now \(\vecb{w}_h=(\vecb{w}_h^\mathrm{ct},0)\) with any $\vecb{w}_h^\mathrm{ct}$ in \(\vecb{V}_h^{\mathrm{ct}}\) and choose
\(\vecb{v}_h := \Pi(\vecb{u} - \vecb{w}_h^\mathrm{ct}) + \vecb{w}_h \in \vecb{V}_{h,0}\).
Due to the properties of the Fortin operator, one obtains
\begin{align*}
  ||| (\vecb{u},\vecb{0}) - \vecb{v}_h |||_\star
  \leq ||| (\vecb{u},\vecb{0})- \vecb{w}_h |||_\star + ||| \Pi(\vecb{u} - \vecb{w}_h^\mathrm{ct}) |||_\star
  \leq (1 + C_F) \| \nabla (\vecb{u} - \vecb{w}_h^\mathrm{ct} )\|.
\end{align*}
Since \(\vecb{w}_h^\mathrm{ct}\) is arbitrary, this inequality also holds for the infimums on both sides. Hence, one arrives at
\begin{align*}
||| (\vecb{u},\vecb{0}) - \vecb{u}_h |||_\star\lesssim (1 + C_F) \inf_{\vecb{w}_h^\mathrm{ct} \in \vecb{V}^\mathrm{ct}_{h}} \| \nabla (\vecb{u} - \vecb{w}_h^\mathrm{ct} )\|.
\end{align*}
This finishes the proof.
\end{proof}

\begin{theorem}[A priori pressure error estimate]
  \label{thm:pressure_estimate}
  Denote by \(p \in Q\) and \(p_h \in Q_h\) the pressure solutions of \eqref{eq:stokes_weak} and \eqref{eq:fullscheme}, respectively.
  Assume $\vecb{u}\in \vecb{H}^{k+1}(\Omega)$ and \(p \in H^{k}(\Omega)\). It holds
  \begin{align*}
    \| p - p_h \| \lesssim \inf_{q_h \in Q_h} \| p - q_h \|
    + \nu ||| (\vecb{u},\vecb{0}) - \vecb{u}_h |||_\star
    \lesssim h^k \lvert {p}\rvert_{H^{k}(\Omega)} + \nu h^k \lvert \vecb{u}\rvert_{\vecb{H}^{k+1}(\Omega)}.
  \end{align*}
  \end{theorem}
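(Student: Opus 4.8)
The plan is to run the classical inf-sup plus Galerkin-orthogonality argument for the pressure error of a mixed method, the only twist being that it has to be carried out on the extended velocity space $\vecb{V}\times\vecb{V}_h^\mathrm{R}$ on which $a_h$ and $b$ are defined.

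The first step is to promote the divergence-free consistency of Lemma~\ref{lem:consistency} to a Galerkin orthogonality on the full space $\vecb{V}_h$. Repeating the computation in the proof of Lemma~\ref{lem:consistency} but \emph{without} assuming $\mathrm{div}(\vecb{v}_h^{\mathrm{ct}}+\vecb{v}_h^{\mathrm{R}})=0$, and using $-\nu\Delta\vecb{u}=\vecb{f}-\nabla p$ together with the integration by parts $(\nabla p,\vecb{v}_h^{\mathrm{ct}}+\vecb{v}_h^{\mathrm{R}})=-(p,\mathrm{div}(\vecb{v}_h^{\mathrm{ct}}+\vecb{v}_h^{\mathrm{R}}))$ — which is legitimate since $\vecb{v}_h^{\mathrm{ct}}\in\vecb{H}_0^1(\Omega)$, $\vecb{v}_h^{\mathrm{R}}\in\vecb{H}_0(\mathrm{div},\Omega)$ has vanishing normal trace, and $p\in H^k(\Omega)\subset H^1(\Omega)$ — gives
\begin{align*}
\nu\,a_h((\vecb{u},\vecb{0}),\vecb{v}_h) + b(\vecb{v}_h,p) = (\vecb{f},\vecb{v}_h)\qquad\text{for all }\vecb{v}_h\in\vecb{V}_h.
\end{align*}
Subtracting the first line of \eqref{eq:fullscheme} then yields the error identity $\nu\,a_h((\vecb{u},\vecb{0})-\vecb{u}_h,\vecb{v}_h)+b(\vecb{v}_h,p-p_h)=0$ for all $\vecb{v}_h\in\vecb{V}_h$.

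Next I would fix an arbitrary $q_h\in Q_h$ and bound $\|p_h-q_h\|$ via the inf-sup condition of Section~\ref{sec:infsup}. For any $\vecb{v}_h\in\vecb{V}_h$ the error identity rewrites as $b(\vecb{v}_h,p_h-q_h)=\nu\,a_h((\vecb{u},\vecb{0})-\vecb{u}_h,\vecb{v}_h)+b(\vecb{v}_h,p-q_h)$. The first term is $\lesssim\nu\,|||(\vecb{u},\vecb{0})-\vecb{u}_h|||_\star\,|||\vecb{v}_h|||_\star$ by the boundedness of $a_h$ on $(\vecb{V}\times\vecb{V}_h^\mathrm{R})^2$ from Lemma~\ref{lem:coercivity}, and the second is $\le\|\mathrm{div}(\vecb{v}_h^{\mathrm{ct}}+\vecb{v}_h^{\mathrm{R}})\|\,\|p-q_h\|$. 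Here $\|\mathrm{div}(\vecb{v}_h^{\mathrm{ct}}+\vecb{v}_h^{\mathrm{R}})\|\lesssim|||\vecb{v}_h|||_\star$: indeed $\|\mathrm{div}\vecb{v}_h^{\mathrm{ct}}\|\le\|\nabla\vecb{v}_h^{\mathrm{ct}}\|$ and $\|\mathrm{div}(\widetilde{\vecb{v}}_h^{\mathrm{R}})\|$ are both $\le|||\vecb{v}_h|||_\star$ directly from the definition of $|||\bullet|||_\star$ in \eqref{eq:norm}, while for $k<d$ the piecewise constant $\mathrm{div}(\vecb{v}_h^{\mathrm{RT}_0})$ is dominated by $\|h_\mathcal{T}^{-1}\vecb{v}_h^{\mathrm{RT}_0}\|$ through an inverse inequality and hence by $a_h^D(\vecb{v}_h^{\mathrm{RT}_0},\vecb{v}_h^{\mathrm{RT}_0})^{1/2}$ via \eqref{eqn:ahD_equivalence} (this part is absent for $k\ge d$). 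Dividing by $|||\vecb{v}_h|||_\star$, taking the supremum over $\vecb{v}_h\in\vecb{V}_h$, and applying the inf-sup inequality to $p_h-q_h\in Q_h$ leaves $\beta\,\|p_h-q_h\|\lesssim\nu\,|||(\vecb{u},\vecb{0})-\vecb{u}_h|||_\star+\|p-q_h\|$.

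Finally, a triangle inequality $\|p-p_h\|\le\|p-q_h\|+\|q_h-p_h\|$ and then passing to the infimum over $q_h\in Q_h$ give the first asserted estimate. The convergence rate follows by taking $q_h$ to be the elementwise $L^2$ projection of $p$ onto $P_{k-1}^{\mathrm{disc}}(\mathcal{T})$, which lies in $Q_h$ because it preserves the vanishing integral mean, with its standard estimate $\lesssim h^k\lvert p\rvert_{H^k(\Omega)}$, combined with $|||(\vecb{u},\vecb{0})-\vecb{u}_h|||_\star\lesssim h^k\lvert\vecb{u}\rvert_{\vecb{H}^{k+1}(\Omega)}$ from Theorem~\ref{thm:velocity_estimate}. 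The step that needs the most care is the first one: one has to verify that testing the continuous momentum balance against the $\vecb{H}(\mathrm{div})$-conforming component $\vecb{v}_h^{\mathrm{R}}$ reproduces precisely the terms $-\nu(\Delta\vecb{u},\vecb{v}_h^{\mathrm{R}})$ and $-(p,\mathrm{div}\vecb{v}_h^{\mathrm{R}})$ entering $a_h$ and $b$, which hinges on the zero normal trace of $\vecb{v}_h^{\mathrm{R}}$ on $\partial\Omega$; once this full Galerkin orthogonality is secured, the rest is the textbook mixed-method pressure estimate together with the norm equivalences already established.
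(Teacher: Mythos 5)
Your proof is correct and follows essentially the same route as the paper's: the classical inf-sup plus Galerkin-orthogonality pressure estimate, with boundedness of $a_h$ and the velocity estimate of Theorem~\ref{thm:velocity_estimate} supplying the final rates (the paper uses the $L^2$-projection $\pi_{Q_h}p$ and a test function whose divergence equals $\pi_{Q_h}p-p_h$, whereas you use a generic $q_h$ and the supremum form of the inf-sup condition, which are interchangeable variants). A small point in your favour: you explicitly derive the consistency identity $\nu a_h((\vecb{u},\vecb{0}),\vecb{v}_h)+b(\vecb{v}_h,p)=(\vecb{f},\vecb{v}_h)$ on all of $\vecb{V}_h$ (not just $\vecb{V}_{h,0}$ as in Lemma~\ref{lem:consistency}), a step the paper's proof uses implicitly in the identity $-b(\vecb{v}_h,p-p_h)=\nu a_h((\vecb{u},\vecb{0})-\vecb{u}_h,\vecb{v}_h)$.
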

  \begin{proof}
    First, there is an identity for the $L^2$ best-approximation \(\pi_{Q_h} p\)
    within \(Q_h\), which gives
    \begin{align*}
      \| p - p_h \|^2 = \inf_{q_h \in Q_h} \| p - q_h \|^2  + \| \pi_{Q_h} p - p_h \|^2.
    \end{align*}
    To estimate the last term, the inf-sup stability guarantees the existence of some
    \(\vecb{v}_h \in \vecb{V}_h\) such that \(\mathrm{div} (\vecb{v}_h^\mathrm{ct}+\vecb{v}_h^\mathrm{R}) = \pi_{Q_h} p - p_h\)
    and \(||| \vecb{v}_h |||_\star \lesssim \| \pi_{Q_h} p - p_h \|\). This allows for the estimate
    \begin{align*}
      \| \pi_{Q_h} p - p_h \|^2
      & = -b( \vecb{v}_h,\pi_{Q_h} p - p_h)
       = -b(\vecb{v}_h,p - p_h)\\
      & = \nu a_h((\vecb{u},\vecb{0}) - \vecb{u}_h, \vecb{v}_h)\\
      &  \lesssim \nu ||| (\vecb{u},\vecb{0}) - \vecb{u}_h |||_\star  \, ||| \vecb{v}_h |||_\star.
    \end{align*}
    Inserting the bound for \(||| \vecb{v}_h |||_\star\) concludes the proof.
  \end{proof}

\section{Reduction to divergence-free schemes with only $P_0$ pressure}
\label{sec:reduced_scheme}
This section derives a reduced scheme that allows to remove the additional
Raviart--Thomas degrees of freedom and only requires a $P_0$ pressure, effectively
resulting into a $\vecb{P}_{k} \times P_{0}$-like system.
Similar reduced schemes can be found in \cite{Lehrenfeld2010,Veiga17,Wei_VEM2021}. The idea of our scheme is based on the important fact that the divergence operator from
$\widetilde{\vecb{RT}}_{k-1}^\mathrm{int}(\mathcal{T})$ to $\widetilde{P}_{k-1}^\mathrm{disc}(\mathcal{T})$ is bijective.

Firstly a general framework is given for $k\geq d$, where it is shown that our method is equivalent to a computable $\vecb{P}_k \times \widehat{Q}_h$-like system. Then taking $\widehat{Q}_h={P}_0^{\mathrm{disc}}(\mathcal{T})\cap Q$ results in a $\vecb{P}_{k} \times P_{0}$-like system for arbitrary $k\geq d$. The reduced scheme for $k<d$ is seperately discussed.
\subsection{General framework for $k\geq d$}
\label{sec:reduced_scheme_general}
Due to the divergence constraint, there are a series of equivalent reduced schemes which seek the velocity in a subspace of $\vecb{V}_h$. The scheme \eqref{eq:reduced_scheme} is such an example which is commonly used in theoretical analysis. However, \eqref{eq:reduced_scheme} is not connected to practical computations in general because the divergence-free basis functions are usually non-trivial to construct. This subsection discusses a computable reduced scheme ($\vecb{P}_{k} \times \widehat{Q}_h$-like), where the operators $\mathcal{R}$ and $\widetilde{\mathcal{R}}$ play an important role again.

The velocity ansatz space that incorporates the divergence constraint by $\widehat{Q}_h^\perp$ reads
\begin{equation}\label{eq:def_tildeVh}
\widehat{\vecb{V}}_{h}:=\bigg\{\vecb{v}_{h}=(\vecb{v}_h^{\mathrm{ct}},\vecb{v}_h^{\mathrm{R}})\in \vecb{V}_{h}: \int_{\Omega}\mathrm{div}(\vecb{v}_h^{\mathrm{ct}}+\vecb{v}_h^{\mathrm{R}})\lambda_{h}\dx=0
\quad\forall \lambda_{h}\in \widehat{Q}_h^\perp\bigg\}.
\end{equation}
Due to the inclusion relationship $\mathrm{div}(\vecb{V}_h^{\mathrm{ct}}+\vecb{V}_h^\mathrm{R})\subseteq Q_h$ and the $L^2$-orthogonal relationship between $\widehat{Q}_h$ and $\widehat{Q}_h^\perp$, $\widehat{\vecb{V}}_{h}$ can be also characterized as
\begin{displaymath}
\widehat{\vecb{V}}_{h}=\bigg\{\vecb{v}_{h}=(\vecb{v}_h^{\mathrm{ct}},\vecb{v}_h^{\mathrm{R}})\in \vecb{V}_{h}: \mathrm{div}(\vecb{v}_h^{\mathrm{ct}}+\vecb{v}_h^{\mathrm{R}})\in \widehat{Q}_h\bigg\}.
\end{displaymath}

Then our method is also equivalent to
seeking \(\vecb{u}_h \in \widehat{\vecb{V}}_{h}\) and \(\hat p_h \in \widehat{Q}_h\), such that
\begin{equation}\label{eq:galerkin_reducedform0}
\begin{aligned}
  \nu a_h(\vecb{u}_h, \vecb{v}_h) + b(\vecb{v}_h,\hat{p}_h)
  & = (\vecb{f}, \vecb{v}_h)
& \text{for all } \vecb{v}_h \in \widehat{\vecb{V}}_{h},\\
  b(\vecb{u}_h, q_h)
  & = 0
& \text{for all } q_h \in \widehat{Q}_h.
\end{aligned}
\end{equation}
\begin{lemma}\label{lem:reduced_hatVh_structure}
The following identity holds:
\begin{align*}
\widehat{\vecb{V}}_h=\left\{\vecb{v}_h=\left(\vecb{v}_h^{\mathrm{ct}},-\mathcal{R}\vecb{v}_h^{\mathrm{ct}}\right): \vecb{v}_h^{\mathrm{ct}}\in \vecb{V}_h^{\mathrm{ct}}\right\}.
\end{align*}
\end{lemma}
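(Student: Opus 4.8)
The plan is to prove the two inclusions separately, throughout restricting to the case $k\geq d$ treated in this subsection. The main tool is the commutation identity \eqref{eq:operatorR_commute}, which here reads $\mathrm{div}(\mathcal{R}\vecb{v})=\pi_{\widehat{Q}_h^{\perp}}\mathrm{div}(\vecb{v})$ for all $\vecb{v}\in\vecb{V}$ (for $k\geq d$ the assumption $\widehat{Q}_h(T)\supseteq P_0(T)$ forces $\widehat{Q}_h^{\perp}\subseteq\widetilde{P}_{k-1}^{\mathrm{disc}}(\mathcal{T})$, so the two branches in the definition of $\mathrm{div}(\widetilde{\mathcal{R}}\bullet)$ agree). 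I will also use that $\mathcal{R}$ maps $\vecb{V}$ into $\vecb{V}_h^{\mathrm{R}}$ (for $k\geq d$ the codomain $\vecb{V}_h^{\mathrm{R}}\cap\widetilde{\vecb{RT}}_{k-1}^{\mathrm{int}}(\mathcal{T})$ coincides with $\vecb{V}_h^{\mathrm{R}}$), that $\mathrm{div}(\vecb{v}_h^{\mathrm{R}})\in\widehat{Q}_h^{\perp}$ for every $\vecb{v}_h^{\mathrm{R}}\in\vecb{V}_h^{\mathrm{R}}$ by \eqref{eqn:bijection_property}, and that $\mathrm{div}$ is injective on $\widetilde{\vecb{RT}}_{k-1}^{\mathrm{int}}(\mathcal{T})\supseteq\vecb{V}_h^{\mathrm{R}}$.

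For the inclusion ``$\supseteq$'', take $\vecb{v}_h^{\mathrm{ct}}\in\vecb{V}_h^{\mathrm{ct}}$ and set $\vecb{v}_h:=(\vecb{v}_h^{\mathrm{ct}},-\mathcal{R}\vecb{v}_h^{\mathrm{ct}})$, which lies in $\vecb{V}_h=\vecb{V}_h^{\mathrm{ct}}\times\vecb{V}_h^{\mathrm{R}}$ since $\mathcal{R}\vecb{v}_h^{\mathrm{ct}}\in\vecb{V}_h^{\mathrm{R}}$. Because $\vecb{v}_h^{\mathrm{ct}}$ has vanishing trace, the divergence theorem gives $\mathrm{div}(\vecb{v}_h^{\mathrm{ct}})\in Q_h=\widehat{Q}_h\oplus_{L^2}\widehat{Q}_h^{\perp}$; subtracting $\mathrm{div}(\mathcal{R}\vecb{v}_h^{\mathrm{ct}})=\pi_{\widehat{Q}_h^{\perp}}\mathrm{div}(\vecb{v}_h^{\mathrm{ct}})$ then leaves $\mathrm{div}(\vecb{v}_h^{\mathrm{ct}}-\mathcal{R}\vecb{v}_h^{\mathrm{ct}})=\pi_{\widehat{Q}_h}\mathrm{div}(\vecb{v}_h^{\mathrm{ct}})\in\widehat{Q}_h$. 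By the second (equivalent) characterization of $\widehat{\vecb{V}}_h$, namely $\mathrm{div}(\vecb{v}_h^{\mathrm{ct}}+\vecb{v}_h^{\mathrm{R}})\in\widehat{Q}_h$, this shows $\vecb{v}_h\in\widehat{\vecb{V}}_h$.

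For ``$\subseteq$'', let $\vecb{v}_h=(\vecb{v}_h^{\mathrm{ct}},\vecb{v}_h^{\mathrm{R}})\in\widehat{\vecb{V}}_h$, so that $\mathrm{div}(\vecb{v}_h^{\mathrm{ct}}+\vecb{v}_h^{\mathrm{R}})\in\widehat{Q}_h$. Applying $\pi_{\widehat{Q}_h^{\perp}}$, using $\pi_{\widehat{Q}_h^{\perp}}\mathrm{div}(\vecb{v}_h^{\mathrm{R}})=\mathrm{div}(\vecb{v}_h^{\mathrm{R}})$ (since $\mathrm{div}(\vecb{v}_h^{\mathrm{R}})\in\widehat{Q}_h^{\perp}$) and the commutation identity, one obtains $\mathrm{div}(\vecb{v}_h^{\mathrm{R}})=-\pi_{\widehat{Q}_h^{\perp}}\mathrm{div}(\vecb{v}_h^{\mathrm{ct}})=-\mathrm{div}(\mathcal{R}\vecb{v}_h^{\mathrm{ct}})$. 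Hence $\vecb{v}_h^{\mathrm{R}}+\mathcal{R}\vecb{v}_h^{\mathrm{ct}}\in\vecb{V}_h^{\mathrm{R}}\subseteq\widetilde{\vecb{RT}}_{k-1}^{\mathrm{int}}(\mathcal{T})$ is divergence-free, and injectivity of $\mathrm{div}$ on $\widetilde{\vecb{RT}}_{k-1}^{\mathrm{int}}(\mathcal{T})$ yields $\vecb{v}_h^{\mathrm{R}}=-\mathcal{R}\vecb{v}_h^{\mathrm{ct}}$.

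There is no real obstacle in this argument; the only points demanding attention are the bookkeeping between $\widehat{Q}_h$ and its $L^2$-orthogonal complement $\widehat{Q}_h^{\perp}$ inside $Q_h$, the fact that $\widehat{Q}_h\supseteq P_0$ makes the two formulations of the commutation identity coincide, and the verification that $\mathrm{div}(\vecb{v}_h^{\mathrm{ct}})\in Q_h$ (i.e.\ has global mean zero, by the homogeneous boundary condition). The only substantive ingredient is the injectivity of the divergence on $\widetilde{\vecb{RT}}_{k-1}^{\mathrm{int}}(\mathcal{T})$, which is precisely what renders the correction $\vecb{v}_h^{\mathrm{R}}=-\mathcal{R}\vecb{v}_h^{\mathrm{ct}}$ uniquely determined.
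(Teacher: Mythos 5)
Your proof is correct and follows essentially the same route as the paper: both directions use the commutation identity $\mathrm{div}(\mathcal{R}\vecb{v}_h^{\mathrm{ct}})=\pi_{\widehat{Q}_h^\perp}\mathrm{div}(\vecb{v}_h^{\mathrm{ct}})$ together with the $L^2$-orthogonal splitting of $Q_h$, and the reverse inclusion concludes via the injectivity of the divergence on $\widetilde{\vecb{RT}}_{k-1}^{\mathrm{int}}(\mathcal{T})$, exactly as in the paper (which leaves that last step implicit in the phrase ``further $\vecb{v}_h^{\mathrm{R}}$ is exactly $-\mathcal{R}\vecb{v}_h^{\mathrm{ct}}$''). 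Your additional bookkeeping about the two branches of $\widetilde{\mathcal{R}}$ coinciding for $k\geq d$ is a harmless clarification rather than a deviation.
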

\begin{proof}
This proof is based on the $L^2$-orthogonality between $\widehat{Q}_h$ and $\widehat{Q}_h^\perp$. For any $\vecb{v}_h^{\mathrm{ct}}\in \vecb{V}_h^{\mathrm{ct}}$, one has $\mathrm{div}(\vecb{v}_h^{\mathrm{ct}})+\mathrm{div}(-\mathcal{R}\vecb{v}_h^{\mathrm{ct}})=\mathrm{div}(\vecb{v}_h^{\mathrm{ct}})-\pi_{\widehat{Q}_h^\perp}\mathrm{div}(\vecb{v}_h^{\mathrm{ct}})=
\pi_{\widehat{Q}_h}\mathrm{div}(\vecb{v}_h^{\mathrm{ct}})\in \widehat{Q}_h$,
which implies that
\begin{align*}
\widehat{\vecb{V}}_h\supseteq\left\{\vecb{v}_h=\left(\vecb{v}_h^{\mathrm{ct}},-\mathcal{R}\vecb{v}_h^{\mathrm{ct}}\right): \vecb{v}_h^{\mathrm{ct}}\in \vecb{V}_h^{\mathrm{ct}}\right\}.
\end{align*}
Conversely, for any $\vecb{v}_h=(\vecb{v}_h^{\mathrm{ct}},\vecb{v}_h^{\mathrm{R}})\in \vecb{V}_h$ which satisfies $\mathrm{div}(\vecb{v}_h^{\mathrm{ct}}+\vecb{v}_h^{\mathrm{R}})\in \widehat{Q}_h$, it follows from the orthogonality that
\begin{align*}
\left(\mathrm{div}(\vecb{v}_h^{\mathrm{ct}}+\vecb{v}_h^{\mathrm{R}}),q_h\right)=0 \quad \text{for all } q_h\in \widehat{Q}_h^\perp.
\end{align*}
The above equality means that $\mathrm{div}(\vecb{v}_h^{\mathrm{R}})=-\pi_{\widehat{Q}_h^\perp}\mathrm{div}(\vecb{v}_h^{\mathrm{ct}})$ and further $\vecb{v}_h^{\mathrm{R}}$ is exactly $-\mathcal{R}\vecb{v}_h^{\mathrm{ct}}$, which implies that
\begin{align*}
\widehat{\vecb{V}}_h\subseteq\left\{\vecb{v}_h=\left(\vecb{v}_h^{\mathrm{ct}},-\mathcal{R}\vecb{v}_h^{\mathrm{ct}}\right): \vecb{v}_h^{\mathrm{ct}}\in \vecb{V}_h^{\mathrm{ct}}\right\}.
\end{align*}
This completes the proof.
\end{proof}
According to Lemma~\ref{lem:reduced_hatVh_structure} one can rewrite \eqref{eq:galerkin_reducedform0} as
seeking \(\vecb{u}_h^{\mathrm{ct}} \in {\vecb{V}}_{h}^{\mathrm{ct}}\) such that
\begin{equation}\label{eq:galerkin_reducedform}
\begin{aligned}
  \nu a_h((\vecb{u}_h^{\mathrm{ct}},-\mathcal{R}\vecb{u}_h^{\mathrm{ct}}), (\vecb{v}_h^{\mathrm{ct}},-\mathcal{R}\vecb{v}_h^{\mathrm{ct}})) - (\mathrm{div}(\vecb{v}_h^{\mathrm{ct}}),\hat{p}_h)
  & = (\vecb{f}, \vecb{v}_h^{\mathrm{ct}}-\mathcal{R}\vecb{v}_h^{\mathrm{ct}})
&& \text{for all } \vecb{v}_h^{\mathrm{ct}}\in {\vecb{V}}_{h}^{\mathrm{ct}},\\
  (\mathrm{div}(\vecb{u}_h^{\mathrm{ct}}), q_h)
  & = 0
&& \text{for all } q_h \in \widehat{Q}_h,
\end{aligned}
\end{equation}
In this rewriting we apply also the fact that $(\mathrm{div}(\mathcal{R}\vecb{v}_h^{\mathrm{ct}}),q_{h})=0$ for all $(\vecb{v}_h^{\mathrm{ct}},q_{h})\in \vecb{V}_h^{\mathrm{ct}}\times\widehat{Q}_h$.

The scheme \eqref{eq:galerkin_reducedform} is the so-called $\vecb{P}_k \times \widehat{Q}_h$-like scheme herein. The implementation of \eqref{eq:galerkin_reducedform} relies on the simple implementation of $\mathcal{R}$. Note that $\vecb{V}_h^\mathrm{R}$ consists of some interior cell functions. The computation of $\mathcal{R}$ can be done on each element $T$, and clearly we have $\mathcal{R}\vecb{v}_h^{\mathrm{ct}}|_T=\vecb{0}$ if $\mathrm{div}(\vecb{v}_h^{\mathrm{ct}})|_T\in \widehat{Q}_h$. Denote by $\{\vecb{\psi}_{j},j=1,...,N_{0}\}$ the basis of $\vecb{V}_h^\mathrm{R}(T)$, where $N_{0}=\operatorname{dim}(\vecb{V}_h^\mathrm{R}(T))$. The computation is equivalent to solving a local problem $A_T \vecb{u}_T=\vecb{b}_{T}$, where
\begin{align}\label{eqn:def_AT}
  A_T := \left( \int_T \mathrm{div} (\vecb{\psi}_{j}) \mathrm{div} (\vecb{\psi}_{k})\dx\right)_{j,k}
  \quad \text{and} \quad
  b_T := \left( \int_T \mathrm{div}(\vecb{v}_h^{\mathrm{ct}}) \mathrm{div} (\vecb{\psi}_{j})\dx\right)_{j}.
\end{align}
Then $\vecb{u}_T=(u_{j})\in\mathbb{R}^{N_0}$ is indeed the vector of coefficients of the expansion of $\mathcal{R}\vecb{v}_h^{\mathrm{ct}}$ on $T$, i.e., $\mathcal{R}\vecb{v}_h^{\mathrm{ct}}|_T=\sum u_j \vecb{\psi}_j$. Moreover, one even does not need to solve local problems on each element. This computation can be achieved by affine transformation from the unit reference element (see Subsection \ref{sec:implementation} below).

\subsection{The $\vecb{P}_k \times P_0$-like scheme for arbitrary order}
\label{sec:reduced_pkp0}
For $k\geq d$, taking $\widehat{Q}_h={P}_0^{\mathrm{disc}}(\mathcal{T})\cap Q$ in Subsection~\ref{sec:reduced_scheme_general} results in a $\vecb{P}_k \times P_0$-like scheme.
For $k<d$, $\widetilde{P}_{k-1}^{\mathrm{disc}}(\mathcal{T})$ and $\widetilde{\vecb{RT}}_{k-1}^{\mathrm{int}}(\mathcal{T})$ play a similar role as $\widehat{Q}_h^\perp$ and $\vecb{V}_h^\mathrm{R}$ before in Subsection~\ref{sec:reduced_scheme_general}. Then the corresponding reduced scheme for $k<d$ seeks \(\widehat{\vecb{u}}_{h}=(\vecb{u}_h^{\mathrm{ct}}, \vecb{u}_h^{\mathrm{RT}_0})\in {\vecb{V}}_{h}^{\mathrm{ct}}\times(\vecb{RT}_{0}(\mathcal{T})\cap \vecb{H}_0(\mathrm{div},\Omega))\) such that
\begin{equation}\label{eq:galerkin_reducedform2}
\begin{aligned}
  \nu a_h(\widehat{\vecb{u}}_h-(\vecb{0},\mathcal{R}\vecb{u}_h^{\mathrm{ct}}), \widehat{\vecb{v}}_h-(\vecb{0},\mathcal{R}\vecb{v}_h^{\mathrm{ct}})) + b(\widehat{\vecb{v}}_h,\hat{p}_h)
  & = (\vecb{f}, \widehat{\vecb{v}}_h-(\vecb{0},\mathcal{R}\vecb{v}_h^{\mathrm{ct}})),\\
  b(\widehat{\vecb{u}}_h, q_h)
  & = 0,
\end{aligned}
\end{equation}
for all $\widehat{\vecb{v}}_h=(\vecb{v}_h^{\mathrm{ct}}, \vecb{v}_h^{\mathrm{RT}_0})\in {\vecb{V}}_{h}^{\mathrm{ct}}\times(\vecb{RT}_{0}(\mathcal{T})\cap \vecb{H}_0(\mathrm{div},\Omega)),q_h \in {P}_{0}^{\mathrm{disc}}(\mathcal{T})\cap Q$.
Note that \eqref{eq:galerkin_reducedform2} can be further reduced to a $\vecb{P}_{k} \times P_{0}$ system, after removing the $\vecb{RT}_{0}$ unknowns by static condensation since the $\vecb{RT}_0-\vecb{RT}_0$ block is diagonal \cite{li2021low}.

Here, $\vecb{V}_h^\mathrm{R}$ should be chosen as
\begin{align*}
\vecb{V}_h^\mathrm{R}:=\begin{cases}
\widetilde{\vecb{RT}}_{k-1}^{\mathrm{int}}(\mathcal{T}) \quad & k\geq d,\\
(\vecb{RT}_{0}(\mathcal{T})\cap \vecb{H}_0(\mathrm{div},\Omega))\oplus\widetilde{\vecb{RT}}_{k-1}^{\mathrm{int}}(\mathcal{T})\quad & k<d.
\end{cases}
\end{align*}
Indeed, this approach results in the same space as in Tables~\ref{tab:spaces2D} and \ref{tab:spaces3D} for $k\leq d$ but in a larger space for $k\geq d+1$. For $k=3,4$ in two dimensions, $\widetilde{\vecb{RT}}_{k-1}^{\mathrm{int}}(\mathcal{T})$ is exactly the sum of the chosen Raviart--Thomas subspaces from $2$ to $k$ in Table~\ref{tab:spaces2D}.

\begin{remark}
For a general order $k$, one can also use the non-divergence-free interior shape functions in
\cite[\S\,2.2.4]{Lehrenfeld2010} (two dimensions) and \cite[pp.\,106--107]{Zaglmayr2006} (three dimensions),
which forms a subspace of the Brezzi--Douglas--Marini spaces of order $k$, isomorphic to $\widetilde{\vecb{RT}}_{k-1}^\mathrm{int}(\mathcal{T})$, and makes the divergence operator
bijective onto $\widetilde{P}_{k-1}^\mathrm{disc}(\mathcal{T})$.
This subspace fulfills all the properties needed in our methods and plays a similar role as $\widetilde{\vecb{RT}}_{k-1}^\mathrm{int}(\mathcal{T})$.
\end{remark}

\subsection{Recovering $\tilde{p}_h=p_{h}-\hat{p}_{h}$ locally on each element $T$}
For simplicity, we use $k\geq d$ as an example.
Due to the inf-sup stability of $(\vecb{V}_h^{\mathrm{ct}}, \widehat{Q}_h)$, it holds that
$\pi_{\widehat{Q}_h}\mathrm{div}(\vecb{V}_h^{\mathrm{ct}})=\widehat{Q}_h$ and further $\mathrm{div}(\widehat{\vecb{V}}_{h})=\widehat{Q}_h$. Meanwhile, since $\widehat{\vecb{V}}_h$ is a subspace of $\vecb{V}_{h}$, the full pressure $p_{h}$ also satisfies
\begin{equation}\label{eq:fullpressure_with_reducedform}
\nu a_h(\vecb{u}_h, \vecb{v}_h) + b(\vecb{v}_h,{p}_h)
   = (\vecb{f}, \vecb{v}_h)\quad\text{ for all } \vecb{v}_h \in \widehat{\vecb{V}}_{h}.
\end{equation}
Subtracting \eqref{eq:fullpressure_with_reducedform} from \eqref{eq:galerkin_reducedform0} one gets, for all $\vecb{v}_h\in \widehat{\vecb{V}}_{h}$,
\begin{displaymath}
b(\vecb{v}_{h},p_{h})=b(\vecb{v}_{h},\hat{p}_{h}),
\end{displaymath}
which implies $\hat{p}_{h}=\pi_{\widehat{Q}_h}p_{h}$ and $\tilde{p}_h=p_{h}-\hat{p}_{h}\in \widehat{Q}_h^\perp$.

Now let us introduce the recover method for $p_{h}$ on $T\in\mathcal{T}$. Like Subsection~\ref{sec:reduced_scheme_general}, let $\{\vecb{\psi}_{j},j=1,\ldots,N_{0}\}$ be the set of basis functions of $\vecb{V}_h^\mathrm{R}$. Note also that $\mathrm{div}(\vecb{\psi}_{j}),j=1,\ldots,N_{0},$ are linearly independent and
\begin{displaymath}
\operatorname{span}\left\{\mathrm{div}(\vecb{\psi}_{j}),j=1,...,N_{0}\right\}=\widehat{Q}_h^\perp(T),
\end{displaymath}
by bijectivity of $\mathrm{div}|_{\vecb{V}_h^\mathrm{R}}$.
Hence the following equations form a local problem for $\tilde{p}_{h}$ on $T$:
\begin{displaymath}
(\tilde{p}_{h},\mathrm{div}(\vecb{\psi}_{j}))_{T}=-(\vecb{f},\vecb{\psi}_{j})_{T}+\nu a_{h}(\vecb{u}_{h},(\vecb{0},\vecb{\psi}_{j}))_{T},\quad j=1,\ldots,N_{0}.
\end{displaymath}
The above system forms the matrix $A_{T}$ also if one uses $\{\mathrm{div}\vecb{\psi}_{j}\}$ as the basis for representing $\tilde{p}_{h}$.

\subsection{Implementation}\label{sec:implementation}
This subsection comments on the algebraic structure of the full and the reduced scheme and some remarks on the efficient implementation.
Algebraically the full scheme solves a system of the form
\begin{align*}
\begin{pmatrix}
  A_{\mathrm{c}\mathrm{c}} & A_{\mathrm{R}\mathrm{c}}^{\top} & B_\mathrm{c}^{\top} \\
  -A_{\mathrm{R}\mathrm{c}} & A_{\mathrm{R}\mathrm{R}} & B_\mathrm{R}^{\top} \\
  B_\mathrm{c} & B_\mathrm{R} & 0 \\
\end{pmatrix}
\begin{pmatrix}
  U_\mathrm{c}\\
  U_\mathrm{R}\\
  P
\end{pmatrix}
=
\begin{pmatrix}
  F_\mathrm{c}\\
  F_\mathrm{R}\\
  0
\end{pmatrix},
\end{align*}
where \(U_\mathrm{c}\), \(U_\mathrm{R}\), and \(P\) are the coefficients for \(\vecb{V}_h^{\mathrm{ct}}\), \(\vecb{V}_h^\mathrm{R}\) and \(Q_h\), respectively.
The blocks \(A_{\mathrm{c}\mathrm{c}}\), \(B_{\mathrm{c}}\), \(F_\mathrm{c}\) are the standard blocks
related to $a$ and $b$ and $\vecb{f}$, respectively. The blocks \(B_{\mathrm{R}}\), \(F_\mathrm{R}\)
represent the $b$ and $\vecb{f}$ applied to functions from \(\vecb{V}_h^\mathrm{R}\). The stabilization block \(A_{\mathrm{R}\mathrm{R}}\)
refers to $a_h^D$ and is only needed when \(k<d\). In the lowest-order case $k=1$ it holds
\(A_{\mathrm{R}\mathrm{c}} = 0\), but for \(k > 1\) it corresponds to \((\Delta_\text{pw} \vecb{u}_h^{\mathrm{ct}}, \vecb{v}_h^{\mathrm{\mathrm{R}}})\).

For brevity, we restrict the remaining presentation to the case $k \geq d$, such that no $\vecb{RT}_0$ part is involved.
Given a representation matrix $R$ for the linear mapping $\mathcal{R} : \vecb{V}_h^{\mathrm{ct}} \rightarrow \vecb{V}_h^\mathrm{R}$ from the previous section, the reduced scheme solves instead
the much smaller system
\begin{align*}
  \begin{pmatrix}
    A_{\mathrm{c}\mathrm{c}} - A_{\mathrm{R}\mathrm{c}}^{\top} R + R^{\top} A_{\mathrm{R}\mathrm{c}}  & B_0^{\top} \\
    B_0 & 0 \\
  \end{pmatrix}
  \begin{pmatrix}
    U_\mathrm{c}\\
    P_0
  \end{pmatrix}
  =
  \begin{pmatrix}
    F_\mathrm{c} - R^\top F_\mathrm{R}\\
    0
  \end{pmatrix},
\end{align*}
where \(U_\mathrm{c}\) and \(P_0\) are the coefficients for \(\vecb{V}_h^{\mathrm{ct}}\) and \(P_0^\mathrm{disc}(\mathcal{T})\cap Q\), respectively.
The Raviart--Thomas part can be recovered by $U_\mathrm{R} = -R U_\mathrm{c}$.

Next, an efficient assembly of the representation matrix $R$ will be explained.
It is sparse and can be computed by solving local problems for each degree of freedom on each cell $T \in \mathcal{T}$.
In fact, for each basis function \(\vecb{v}_j^{\mathrm{ct}} \in \vecb{V}_h^{\mathrm{ct}}\) with support in \(T \in \mathcal{T}\), one has to solve a system formed by \eqref{eqn:def_AT},
and to add the solution to the \(j\)-th row of $R$.

These solutions can be obtained very efficiently
via the following strategy.
Assume $k=2$ in two dimensions as an example. We compute $\mathcal{R}$ via $\widetilde{\mathcal{R}}$ by the relationship ${\mathcal{R}}\vecb{v}_h^{\mathrm{ct}}=\widetilde{\mathcal{R}}(\mathrm{div}\vecb{v}_h^{\mathrm{ct}})$ for all $\vecb{v}_h^{\mathrm{ct}}\in \vecb{V}_h^{\mathrm{ct}}$. Note that $\mathrm{div}\vecb{V}_h^{\mathrm{ct}}|_{T}\subseteq P_1(T)=\mathrm{span}\left\{\varphi_1,\varphi_2,\varphi_3\right\}$
and a nodal interpolation allows to determine the coefficients $c_j$
in $\mathrm{div}\vecb{v}_h^{\mathrm{ct}}=\sum c_j\varphi_j$. Hence the problem reduces to
find the images of $\widetilde{\mathcal{R}}(\varphi_j|_T) = \sum_{j=1}^3 x_{T,j} \vecb{\psi}^{\mathrm{RT}_1}_{j}$ for $j=1,2,3$
and the particular basis functions \(\vecb{\psi}^{\mathrm{RT}_1}_{j}\) on $T$.
In principle one needs to solve the linear system of equations $A_T \vecb{x}_{T,j} = \vecb{b}_{T,j}$
with $A_T$ from \eqref{eqn:def_AT} and
\begin{align*}
  \vecb{b}_{T,j} := \left( \int_T \varphi_j \mathrm{div} (\vecb{\psi}^{\mathrm{RT}_1}_{k})\dx\right)_{k = 1,2,3}.
\end{align*}
However, employing the properties of the standard (Piola) transformations between the reference simplex $T^\text{ref}$
and the general simplex $T$, see ,e.g., \cite[Chapter II, \S 2]{BBF:book:2013} for details, it holds $A_T = J^{-1} A_{T^\text{ref}}$ and $\vecb{b}_{T,j}$ = $\vecb{b}_{T^\text{ref},j}$
where \(J\) is the determinant of the matrix of the affine transformation, i.e., \(J = d! \lvert T \rvert\).
In other words, it holds \(\vecb{x}_{T,j} = J \vecb{x}_{T^\text{ref},j}\) and the local systems for $j=1,2,3$ only have to be solved
once on the reference domain. Of course, the process can be generalized to larger $k$.

Similarly, for the pressure recovering process, one has $A_{T}^{-1}={J}A_{T^\mathrm{ref}}^{-1}$. Thus we only need to solve the inverse of $A_{T^\mathrm{ref}}$ and all inverses of $A_{T}$ can be obtained from it directly.
Then the process of pressure recovering only requires matrix-vector multiplications.

\section{Numerical results}
\label{sec:numerics}
This section confirms the theoretical findings
in two numerical examples. For $k < d$, the lowest-order Raviart--Thomas part is stabilized with $\alpha = 1$
in all numerical studies.

\input{example1_data.tex}


\pgfplotstableread[col sep=ampersand,row sep=\\]{
  ndof  &           L2u  &	  oL2u  &       L2p      &   oL2p   &       H1u      &    oH1u  &       L2uR     &    oL2uR &      L2divu    &  oL2divu  &       L2uD     &  oL2uD   \\
  144  &     1.911e-01  &    0.00  &     6.379e-02  &    0.00  &     1.715e+00  &    0.00  &     1.728e-01  &    0.00  &     3.905e-13  &    0.00  &     8.322e-17  &    0.00  \\
  729  &     1.433e-01  &    0.53  &     2.870e-02  &    1.48  &     1.712e+00  &    0.00  &     1.016e-01  &    0.98  &     1.918e-12  &    -2.94  &     2.456e-17  &    2.26  \\
 5210  &     4.539e-02  &    1.75  &     1.353e-02  &    1.15  &     8.770e-01  &    1.02  &     3.146e-02  &    1.79  &     8.536e-14  &    4.75  &     5.686e-18  &    2.23  \\
34113  &     1.210e-02  &    2.11  &     7.121e-03  &    1.03  &     4.553e-01  &    1.05  &     9.292e-03  &    1.95  &     4.691e-14  &    0.96  &     1.593e-18  &    2.03  \\
242743  &     2.920e-03  &    2.17  &     3.638e-03  &    1.03  &     2.265e-01  &    1.07  &     2.331e-03  &    2.11  &     3.414e-14  &    0.49  &     3.836e-19  &    2.18  \\
}\EXAMPLExTWOxORDERxONExFULL


\pgfplotstableread[col sep=ampersand,row sep=\\]{
  ndof  &           L2u  &	  oL2u  &       L2pr     &   oL2pr  &       L2p      &   oL2p   &       H1u      &    oH1u  &       L2uR     &    oL2uR &      L2divu    &  oL2divu  &       L2uD     &  oL2uD   \\
  299  &     1.883e-01  &    0.00  &     6.380e-02  &    0.00  &     1.165e-02  &    0.00  &     1.546e+00  &    0.00  &     1.762e-01  &    0.00  &     1.565e-16  &    0.00  &     2.573e-17  &    0.00  \\
  1455  &     3.927e-02  &    2.97  &     2.869e-02  &    1.51  &     3.039e-03  &    2.55  &     5.883e-01  &    1.83  &     3.737e-02  &    2.94  &     2.778e-16  &    -1.09  &     2.683e-18  &    4.29  \\
  9779  &     6.349e-03  &    2.87  &     1.353e-02  &    1.18  &     6.732e-04  &    2.37  &     1.833e-01  &    1.84  &     5.359e-03  &    3.06  &     1.055e-15  &    -2.10  &     3.778e-19  &    3.09  \\
 61669  &     8.321e-04  &    3.31  &     7.118e-03  &    1.05  &     1.774e-04  &    2.17  &     4.753e-02  &    2.20  &     6.685e-04  &    3.39  &     1.619e-15  &    -0.70  &     5.277e-20  &    3.21  \\
428790  &     1.048e-04  &    3.20  &     3.637e-03  &    1.04  &     4.562e-05  &    2.10  &     1.181e-02  &    2.15  &     8.581e-05  &    3.18  &     3.106e-15  &    -1.01  &     6.648e-21  &    3.20  \\
}\EXAMPLExTWOxORDERxTWOxREDUCED %

\pgfplotstableread[col sep=ampersand,row sep=\\]{
    ndof  &           L2u  &	  oL2u  &       L2p      &   oL2p   &       H1u      &    oH1u  &       L2uR     &    oL2uR &      L2divu    &  oL2divu  &       L2uD     &  oL2uD   \\
    453  &     1.883e-01  &    0.00  &     1.165e-02  &    0.00  &     1.546e+00  &    0.00  &     1.762e-01  &    0.00  &     2.130e-16  &    0.00  &     2.573e-17  &    0.00  \\
    2463  &     3.927e-02  &    2.78  &     3.039e-03  &    2.38  &     5.883e-01  &    1.71  &     3.737e-02  &    2.75  &     1.170e-15  &    -3.02  &     2.683e-18  &    4.01  \\
   18557  &     6.349e-03  &    2.71  &     6.732e-04  &    2.24  &     1.833e-01  &    1.73  &     5.359e-03  &    2.88  &     1.329e-15  &    -0.19  &     3.778e-19  &    2.91  \\
  124179  &     8.321e-04  &    3.21  &     1.774e-04  &    2.11  &     4.753e-02  &    2.13  &     6.685e-04  &    3.29  &     1.859e-15  &    -0.53  &     5.277e-20  &    3.11  \\
  893527  &     1.048e-04  &    3.15  &     4.562e-05  &    2.06  &     1.181e-02  &    2.12  &     8.581e-05  &    3.12  &     1.196e-14  &    -2.83  &     6.648e-21  &    3.15  \\
}\EXAMPLExTWOxORDERxTWOxFULL


\pgfplotstableread[col sep=ampersand,row sep=\\]{
  ndof  &           L2u  &	  oL2u  &       L2pr     &   oL2pr  &       L2p      &   oL2p   &       H1u      &    oH1u  &       L2uR     &    oL2uR &      L2divu    &  oL2divu  &       L2uD     &  oL2uD   \\
  622  &     7.212e-02  &    0.00  &     6.378e-02  &    0.00  &     1.698e-03  &    0.00  &     8.310e-01  &    0.00  &     3.280e-02  &    0.00  &     3.377e-16  &    0.00  &     5.540e-18  &    0.00  \\
  3189  &     1.049e-02  &    3.54  &     2.869e-02  &    1.47  &     2.440e-04  &    3.56  &     2.027e-01  &    2.59  &     3.999e-03  &    3.86  &     6.914e-16  &    -1.32  &     4.456e-19  &    4.63  \\
 22488  &     5.961e-04  &    4.40  &     1.353e-02  &    1.15  &     2.894e-05  &    3.27  &     2.596e-02  &    3.16  &     3.099e-04  &    3.93  &     1.826e-15  &    -1.49  &     2.487e-20  &    4.43  \\
145181  &     3.717e-05  &    4.46  &     7.118e-03  &    1.03  &     3.850e-06  &    3.24  &     3.364e-03  &    3.29  &     2.294e-05  &    4.19  &     3.531e-15  &    -1.06  &     1.944e-21  &    4.10  \\
}\EXAMPLExTWOxORDERxTHREExREDUCED %

\pgfplotstableread[col sep=ampersand,row sep=\\]{
    ndof  &           L2u  &	  oL2u  &       L2p      &   oL2p   &       H1u      &    oH1u  &       L2uR     &    oL2uR &      L2divu    &  oL2divu  &       L2uD     &  oL2uD   \\
    1018  &     7.212e-02  &    0.00  &     1.698e-03   &   0.00  &     8.310e-01   &   0.00  &     3.280e-02   &   0.00  &     2.875e-10  &    0.00  &     5.540e-18   &   0.00  \\
    5781  &     1.049e-02  &    3.33  &     2.440e-04   &   3.35  &     2.027e-01   &   2.44  &     3.999e-03   &   3.64  &     1.833e-10  &    0.78  &     4.456e-19   &   4.35  \\
   45060  &     5.961e-04  &    4.19  &     2.894e-05   &   3.11  &     2.596e-02   &   3.00  &     3.099e-04   &   3.74  &     2.373e-10  &    -0.38  &     2.487e-20  &    4.22  \\
  305921  &     3.717e-05  &    4.35  &     3.850e-06   &   3.16  &     3.364e-03   &   3.20  &     2.294e-05   &   4.08  &     1.947e-11  &    3.92  &     1.944e-21   &   3.99  \\
}\EXAMPLExTWOxORDERxTHREExFULL

\begin{figure}
 \begin{tikzpicture}
  \begin{axis}[
    legend style={font=\scriptsize, at={(0,0)}, anchor=south west},
    xmode=log,
    ymode=log,
    every axis plot/.append style={thick},
    width=7cm,
    height=6cm,
    title = $\| \vecb{u} - (\vecb{u}_h^\mathrm{ct}+\vecb{u}_h^\mathrm{R})\|$,
    xlabel={ndof},
    ylabel={}]
    \addplot[color = blue, mark = pentagon*] table[x=ndof,y=L2u] {\EXAMPLExTWOxORDERxONExFULL};
    \addlegendentry{$k = 1$}
    \addplot[color = red, mark = square*] table[x=ndof,y=L2u] {\EXAMPLExTWOxORDERxTWOxFULL};
    \addlegendentry{$k = 2$}
    \addplot[color = olive, mark = *] table[x=ndof,y=L2u] {\EXAMPLExTWOxORDERxTHREExFULL};
    \addlegendentry{$k = 3$}

    \addplot[color = red, mark = square*, dashed] table[x=ndof,y=L2u] {\EXAMPLExTWOxORDERxTWOxREDUCED};
    \addplot[color = olive, mark = *, dashed] table[x=ndof,y=L2u] {\EXAMPLExTWOxORDERxTHREExREDUCED};

    \addplot[domain=1e2:1e6, color=gray, dotted]{1e1/pow(x,2/3)};
    \addplot[domain=1e2:1e6, color=gray, dotted]{4e1/pow(x,3/3)};
    \addplot[domain=1e2:1e6, color=gray, dotted]{2e2/pow(x,4/3)};
  \end{axis}
  \end{tikzpicture}
  \begin{tikzpicture}
   \begin{axis}[
     legend style={font=\scriptsize, at={(0,0)}, anchor=south west},
     xmode=log,
     ymode=log,
     every axis plot/.append style={thick},
     width=7cm,
     height=6cm,
     title = $\| \nabla(\vecb{u} - \vecb{u}^1_h) \|$,
     xlabel={ndof},
     ylabel={}]
     \addplot[color = blue, mark = pentagon*] table[x=ndof,y=H1u] {\EXAMPLExTWOxORDERxONExFULL};
     \addlegendentry{$k = 1$}
     \addplot[color = red, mark = square*] table[x=ndof,y=H1u] {\EXAMPLExTWOxORDERxTWOxFULL};
     \addlegendentry{$k = 2$}
     \addplot[color = olive, mark = *] table[x=ndof,y=H1u] {\EXAMPLExTWOxORDERxTHREExFULL};
     \addlegendentry{$k = 3$}

     \addplot[color = red, mark = square*, dashed] table[x=ndof,y=H1u] {\EXAMPLExTWOxORDERxTWOxREDUCED};
     \addplot[color = olive, mark = *, dashed] table[x=ndof,y=H1u] {\EXAMPLExTWOxORDERxTHREExREDUCED};

    \addplot[domain=1e2:1e6, color=gray, dotted]{1e1/pow(x,1/3)};
    \addplot[domain=1e2:1e6, color=gray, dotted]{6e1/pow(x,2/3)};
    \addplot[domain=1e2:1e6, color=gray, dotted]{4e2/pow(x,3/3)};
   \end{axis}
   \end{tikzpicture}

   \begin{tikzpicture}
    \begin{axis}[
      legend style={font=\scriptsize, at={(0,0)}, anchor=south west},
      xmode=log,
      ymode=log,
      every axis plot/.append style={thick},
      width=7cm,
      height=6cm,
      title = $\| \vecb{u}_h^R \|$,
      xlabel={ndof},
      ylabel={}]
      \addplot[color = blue, mark = pentagon*] table[x=ndof,y=L2uR] {\EXAMPLExTWOxORDERxONExFULL};
      \addlegendentry{$k = 1$}
      \addplot[color = red, mark = square*] table[x=ndof,y=L2uR] {\EXAMPLExTWOxORDERxTWOxFULL};
      \addlegendentry{$k = 2$}
      \addplot[color = olive, mark = *] table[x=ndof,y=L2uR] {\EXAMPLExTWOxORDERxTHREExFULL};
      \addlegendentry{$k = 3$}

      \addplot[color = red, mark = square*, dashed] table[x=ndof,y=L2uR] {\EXAMPLExTWOxORDERxTWOxREDUCED};
      \addplot[color = olive, mark = *, dashed] table[x=ndof,y=L2uR] {\EXAMPLExTWOxORDERxTHREExREDUCED};

      \addplot[domain=1e2:1e6, color=gray, dotted]{7e0/pow(x,2/3)};
      \addplot[domain=1e2:1e6, color=gray, dotted]{3e1/pow(x,3/3)};
      \addplot[domain=1e2:1e6, color=gray, dotted]{1e2/pow(x,4/3)};
    \end{axis}
    \end{tikzpicture}
   \begin{tikzpicture}
    \begin{axis}[
      legend style={font=\scriptsize, at={(0,0)}, anchor=south west},
      xmode=log,
      ymode=log,
      every axis plot/.append style={thick},
      width=7cm,
      height=6cm,
      title = $\| p - p_h\|$,
      xlabel={ndof},
      ylabel={}]
      \addplot[color = blue, mark = pentagon*] table[x=ndof,y=L2p] {\EXAMPLExTWOxORDERxONExFULL};
      \addlegendentry{$k = 1$}
      \addplot[color = red, mark = square*] table[x=ndof,y=L2p] {\EXAMPLExTWOxORDERxTWOxFULL};
      \addlegendentry{$k = 2$}
      \addplot[color = olive, mark = *] table[x=ndof,y=L2p] {\EXAMPLExTWOxORDERxTHREExFULL};
      \addlegendentry{$k = 3$}

      \addplot[color = red, mark = square*, dashed] table[x=ndof,y=L2p] {\EXAMPLExTWOxORDERxTWOxREDUCED};
      \addplot[color = olive, mark = *, dashed] table[x=ndof,y=L2p] {\EXAMPLExTWOxORDERxTHREExREDUCED};

    \addplot[domain=1e2:1e6, color=gray, dotted]{1e-1/pow(x,1/3)};
    \addplot[domain=1e2:1e6, color=gray, dotted]{2e-1/pow(x,2/3)};
    \addplot[domain=1e2:1e6, color=gray, dotted]{6e-1/pow(x,3/3)};
    \end{axis}
    \end{tikzpicture}
    \caption{\label{fig:convhistory_example2}Example 2: Convergence history for $\| \vecb{u} - (\vecb{u}_h^\mathrm{ct}+\vecb{u}_h^\mathrm{R})\|$ (top left), $\| \nabla(\vecb{u} - \vecb{u}^1_h)\|$ (top right), $\| \vecb{u}^R \|$ (bottom left), and $\| p - p_h \|$ (bottom right) for different $k$.
    The solid lines coresspond to the full schemes, while dashed lines coresspond to the reduced schemes. The slopes of the gray dotted lines coresspond to the expected optimal order of the curve(s) right above them.}
  \end{figure}

\subsection{Example 1 - Two-dimensional planar lattice flow}
Consider the planar lattice flow
\begin{align*}
    \vecb{u} = \begin{pmatrix}
   	 		\sin(2\pi x) \sin(2 \pi y) \\
    		\cos(2\pi x) \cos(2 \pi y)
    	\end{pmatrix}
    	\quad \text{and} \quad
    p = (\cos(4 \pi x) - \cos(4 \pi y)) / 4
\end{align*}
with \(\vecb{f}\) chosen such that \((\vecb{u}, p)\) solves the Stokes problem with \(\nu = 10^{-3}\).

Figure~\ref{fig:convhistory_example1} displays the convergence histories for the velocity
and pressure errors and confirms that all methods converge optimally for order $k=1,2,3,4$
according to Theorems~\ref{thm:velocity_estimate} and \ref{thm:pressure_estimate}.
Further studies, which are not presented for the sake of brevity, showed also that
$\vecb{u}_h^{\mathrm{ct}} + \vecb{u}_h^{\mathrm{R}}$ is exactly divergence-free
and that the scheme is pressure-robust, i.e., the velocity does not change for other values of $\nu$.
The reduced schemes for $k=2,3,4$ also produce optimally converging velocity
and pressure approximations. Here the reduced scheme refers to the $\vecb{P}_k-P_0$-like scheme in Subsection~\ref{sec:reduced_pkp0}, while the full scheme refers to \eqref{eq:fullscheme} with the enrichment space discussed in Subsection~\ref{sec:enrichment_space_choices}. For $k=2$ the solutions of the full scheme and the reduced scheme
coincide, while for $k>2$ the solutions slightly differ in the sense that
the $\vecb{V}_h^{\mathrm{R}}$ is a bit larger in case of
the reduced scheme.

\subsection{Example 2 - A three-dimensional example with analytic solutions} On $\Omega=(0,1)^3$, the solution is prescribed as
\begin{displaymath}
\vecb{u}=\frac{1}{2\pi}\operatorname{curl}\left\{[\sin(\pi x)\sin(\pi y)]^{2}\sin(\pi z) \vecb{e}_{3}\right\} \quad \text{and} \quad p=\sin(x)\sin(y)\sin(z)-(1-\cos1)^3,
\end{displaymath}
with $\vecb{e}_{3}=(0,0,1)^{\top}$. Again, we consider a Stokes equation with $\nu=10^{-3}$ by choosing a suitable body force.

Figure~\ref{fig:convhistory_example2} displays the convergence histories for the velocity
and pressure errors, where the enrichment space discussed in Subsection~\ref{sec:enrichment_space_choices} is employed. The expected convergence orders from Theorems~\ref{thm:velocity_estimate}
and \ref{thm:pressure_estimate} are obtained. The reduced scheme for $k=2$ and $k=3$ produces
exactly the same solution as the full scheme, while solving a smaller
linear system of equations.

\section{Summary and outlook}

This paper presents a novel way how to stabilize the Scott--Vogelius finite element
method for arbitrary polynomial degree $k$ and general shape-regular simplicial meshes that preserves
optimal convergence and the divergence-free property of the velocity.
This is realized by enriching the velocity space with carefully chosen Raviart--Thomas
functions that stabilize the orthogonal complement of a small enough
sub-pressure space that is known to be inf-sup stable.
Finally, a reduced scheme is studied which only involves the degrees of freedom of the $\vecb{H}^1$-conforming velocity and a piecewise constant pressure.

In the future, extensions to Navier--Stokes problems will be studied, which is able to preserve some conservation properties (e.g., conservation of linear momentum) and is $Re$-semi-robust. The latter property refers to estimates where the constants do not
blow up as the Reynolds number becomes large.

\bibliographystyle{siam}
\bibliography{lit}

\end{document}